\spnewtheorem{assumption}{Assumption}{\bf}{\it}
\newcommand{\E}{\mathbb{E}}
\newcommand{\R}{\mathbb{R}}
\newcommand{\C}{\mathbb{C}}
\newcommand{\Z}{\mathbb{Z}}
\newcommand{\N}{\mathbb{N}}
\newcommand{\M}{\mathbb{M}}
\newcommand{\tr}{\mathrm{tr}}
\renewcommand{\P}{\mathbb{P}}
\newcommand{\Fc}{\mathcal{F}}
\newcommand{\Hc}{\mathcal{H}}
\renewcommand{\d}{\mathrm{d}}%
\begin{document}

\title{Approximation Bounds for Random Neural Networks and Reservoir Systems}



\author{Lukas Gonon \and Lyudmila Grigoryeva \and Juan-Pablo Ortega  }


\institute{L. Gonon \at
              Faculty of Mathematics and Statistics,
              Universit\"at Sankt Gallen,
              Switzerland \\
              \email{lukas.gonon@unisg.ch}           
           \and
           L. Grigoryeva \at
              Department of Mathematics and Statistics,
              Graduate School of Decision Sciences,
              Universit\"at Konstanz,
              Germany \\ \email{Lyudmila.Grigoryeva@uni-konstanz.de}
            \and J.-P. Ortega \at
              Faculty of Mathematics and Statistics,
              Universit\"at Sankt Gallen,
              Switzerland and CNRS, France  \\
              \email{Juan-Pablo.Ortega@unisg.ch}           
}

\date{Received: date / Accepted: date}

\maketitle

\begin{abstract}
This work studies approximation based on single-hidden-layer feedforward and recurrent neural networks with randomly generated internal weights. These methods, in which only the last layer of weights and a few hyperparameters are optimized, have been successfully applied in a wide range of static and dynamic learning problems.
Despite the popularity of this approach in empirical tasks, important theoretical questions regarding the relation between the unknown function, the weight distribution, and the approximation rate have remained open. In this work it is proved that, as long as the unknown function, functional, or dynamical system is sufficiently regular, it is possible to draw the internal weights of the random (recurrent) neural network from a generic distribution (not depending on the unknown object) and quantify the error in terms of the number of neurons and the hyperparameters.
In particular, this proves that echo state networks with randomly generated weights are capable of approximating a wide class of dynamical systems arbitrarily well and thus provides the first mathematical explanation for their empirically observed success at learning dynamical systems. 

\keywords{Neural Networks \and Approximation Error \and Reservoir Computing \and Echo State Networks \and Random Function Approximation}
 \subclass{60-08 \and 60H25 \and 41A30 \and 93E35}
\end{abstract}

\section{Introduction}

This article studies the approximation of an unknown map $H^* \colon \mathcal{X} \to \R^m$ by a random (recurrent) neural network. More specifically, when $\mathcal{X} = \R^q$ we study approximations of the function $H^*$ by single-hidden-layer feedforward neural networks  $H^{{\bf A},\bm{\zeta}}_{\bf W}({\bf z})= {\bf W} \bm{\sigma}({\bf A} {\bf z} + \bm{\zeta})$ with ${\bf A} \in \mathbb{M}_{N, q}, \bm{\zeta} \in \R^{N}$ randomly drawn (not using any knowledge about $H^*$), $\boldsymbol{\sigma}: \mathbb{R}^N\longrightarrow \mathbb{R}^N$ a given activation function (obtained as the componentwise application of a map $\sigma: \mathbb{R} \longrightarrow \mathbb{R}$) and ${\bf W} \in \mathbb{M}_{m, N}$ a matrix that can be trained in order to approximate $H^*$ as well as possible. Random neural networks of this type have been applied very successfully in a variety of settings, we refer in particular to the seminal works on random feature models \cite{Rahimi2007} and Extreme Learning Machines \cite{Huang2006}. We refer to this case as the \textit{static} situation and will come back to it later on. In contrast, we speak about the \textit{dynamic} situation when $H^*$ takes as inputs sequences, i.e.\ $\mathcal{X} \subset (\R^d)^{\Z_-}$.

A particularly important family of approximants that we study in the dynamic situation are {\it reservoir systems}, that is, $H({\bf z}) = {\bf y}_0$ for ${\bf z} \in \mathcal{X} \subset (\R^d)^{\Z_-}$, where  ${\bf y}_0$ is the solution (which exists and is unique under suitable hypotheses) of the state-space system 
\begin{equation} \label{eq:RCIntro}
\left\{
\begin{aligned}
\mathbf{x}_t & = F(\mathbf{x}_{t-1},{\bf z}_t), \\ 
{\bf y}_t & = h(\mathbf{x}_t), \quad t \in \Z_-,
\end{aligned}
\right.
\end{equation}
where the {\it state} or {\it reservoir} map $F$ is (for the most part) randomly generated and only the static {\it observation} or {\it readout} map $h$ is trained in specific learning tasks. An important particular case of \eqref{eq:RCIntro} are {\it echo state networks} (ESNs) \cite{Matthews:thesis}, \cite{Matthews1993}, \cite{Matthews1994}, \cite{Jaeger04}. These are recurrent neural networks that map the input ${\bf z} \in (\R^d)^{\Z_-}$ to the value $H^{{\bf A},{\bf C},\bm{\zeta}}_{\bf W}({\bf z})={\bf Y}_0\in \mathbb{R} ^m$ determined by
\begin{equation} \label{eq:ESNIntro}
\left\{
\begin{aligned}
 \mathbf{X}_t &  = \bm{\sigma}( {\bf A} \mathbf{X}_{t-1} + {\bf C} {\bf z}_t + \bm{\zeta}), \quad t \in \Z_-,\\
{\bf Y}_t & = {\bf W} \mathbf{X}_t, \quad t \in \Z_-.
\end{aligned}
\right.
\end{equation} 
Here ${\bf A}, {\bf C}, \bm{\zeta}$ are randomly drawn (from a distribution that does not use any knowledge about $H^*$), $\bm{\sigma}$ is a given activation function as above, and ${\bf W}$ is optimized at the time of training in order to approximate $H^*$ as well as possible. This technique has been successful in a wide range of applications (see, for example, \cite{Jaeger04}, \cite{pathak:chaos}, \cite{Pathak:PRL}, \cite{Ott2018}). Based on these empirical results, ESNs with randomly generated ${\bf A}, {\bf C}, \bm{\zeta}$ are thought to be capable of approximating arbitrary dynamical and input/output systems. However, a rigorous mathematical result proving this statement does not exist yet in the literature. It is only in the context of invertible and differentiable dynamical systems on a compact manifold that a result of this type has been recently established. Indeed, the results in \cite{hart:ESNs} show that randomly drawn ESNs like \eqref{eq:ESNIntro} can be trained by optimizing ${\bf W} $ using generic one-dimensional observations of a given invertible and differentiable dynamical system to produce dynamics that are topologically conjugate to that given system.

In this article we place ourselves in the more general setup of input/output systems and provide a first mathematical result that proves the approximation capabilities of ESNs in a discrete-time setting and quantifies them by providing approximation bounds in terms of their architecture parameters.
In more detail, we propose a constructive sampling procedure for ${\bf A}, {\bf C}, \bm{\zeta}$ (depending only on three hyperparameters) so that by training ${\bf W}$, the associated system \eqref{eq:ESNIntro} can be used to approximate any $H^*$ satisfying mild regularity assumptions. The $L^2$-error between $H^*$ and its echo state approximation $H^{{\bf A},{\bf C},\bm{\zeta}}_{\bf W}$ can be bounded explicitly and the approximation result can also be extended to a universality result for general $H^*$ (not satisfying the regularity conditions). For full details we refer to Theorem~\ref{thm:ESNErrorBound} and Corollary~\ref{cor:universality} below.

We complement these results by analyzing a popular modification of \eqref{eq:ESNIntro}, in which the hidden state $\mathbf{X}$ is updated according to $\mathbf{X}_t   = \bm{\sigma}( {\bf A} {\bf W} \mathbf{X}_{t-1} + {\bf C} {\bf z}_t + \bm{\zeta})$. These systems are called {\it  echo state networks with output feedback} (or {\it Jordan recurrent neural networks} with random internal weights) and are also widely used in the literature even though, in this case, a more sophisticated training algorithm is needed (for instance a stochastic gradient-type optimization algorithm combined with backpropagation in time). By applying similar tools as in the case of \eqref{eq:ESNIntro} we provide an approximation result for such systems in situations when the unknown functional is itself given by a sufficiently regular reservoir system of type \eqref{eq:RCIntro}. In this case, only one hyperparameter $N$ appears (proportional to the number of neurons, i.e. the dimension of $\bf X$) and the approximation error is of order  $O(1/\sqrt{N})$. We refer to Theorem~\ref{thm:approx} below for full details.

To prove these results we rely mainly on probabilistic arguments involving concentration inequalities, an importance sampling procedure and techniques from empirical process theory (in particular the Ledoux-Talagrand inequality~\cite{Ledoux2013}). A further crucial ingredient is an integral representation for sufficiently regular functions related to the integral representations appearing in the proofs in \cite{Barron1993}, \cite{Maiorov2000}, \cite{Klusowski2018}. In continuous time, an alternative approach based on randomized signature is presented in \cite{JLpaper} and \cite{RC13}.

We emphasize that the proof of these dynamic statements crucially relies on our novel results for the static case. To understand these better, we briefly elaborate on the literature (we refer to the introduction of \cite{Rahimi2009} for a detailed overview). The seminal work by Barron~\cite{Barron1993} shows that any function $H^* \colon \R^q \to \R$ of a certain regularity can be approximated up to an error of order $O(1/\sqrt{N})$ using a neural network with one hidden layer and $N$ hidden nodes. The hidden weights can be generated randomly, but the distribution from which they need to be drawn depends on $H^*$. Thus, the randomly drawn weights are only used to guarantee the existence of tunable weights. Subsequently, the important contributions by Rahimi and Recht \cite{Rahimi2007}, \cite{Rahimi2008}, \cite{Rahimi2009} analyze random weights generated from a known probability distribution $p$. In their argument the optimal output layer weights (which are tuned) implement an importance sampling procedure.
The function class $\mathfrak{F}_p$ for which error bounds can be derived (see Theorems~3.1 and 3.2 in \cite{Rahimi2009}) and for which an approximation error of order $O(1/\sqrt{N})$ is guaranteed is defined in terms of $p$ and it is shown that $\mathfrak{F}_p$ is dense. However, for a given function $H^*$ it may be challenging to decide whether $H^* \in \mathfrak{F}_p$ (and hence the error bound applies) or not. 
In this paper we show that under mild regularity assumptions on $H^*$ one automatically has $H^* \in \mathfrak{F}_p$ for a wide class of distributions $p$ including the most commonly used case when $p$ is a uniform distribution. This is formulated abstractly in Theorem~\ref{thm:approxstatic} and then specialized to the uniform distribution in Proposition~\ref{prop:BarronFininteDim} and Corollary~\ref{cor:RandomNNApprox}. We also make the dependence of the resulting bounds on the input dimension explicit. This can be used to decide whether approximations by (shallow) random neural networks for classes of functions (parametrized by the input dimension) suffer from the curse of dimensionality or not. We emphasize that although all these results use shallow neural networks which are, from an approximation theory perspective, less flexible than deep neural networks (see, for instance, \cite{Maiorov2000}, \cite{Poggio2017}), here the hidden weights are generated randomly and so the neural network training does not require gradient descent-type optimization techniques.

Finally, let us point out that Theorem~\ref{thm:ESNErrorBound} entails a constructive sampling scheme for the weights that may be readily used by  practitioners and provides a learning procedure in which only  ${\bf W}$ and three hyperparameters need to be optimized. 

The remainder of this paper is organized as follows. In Section~\ref{sec:Preliminaries} we introduce some key concepts on reservoir systems. Section~\ref{sec:integralRepresentation} then proves an integral representation for sufficiently regular functions, which is at the core of the subsequent approximation results. In Section~\ref{sec:Static} we then treat the static case and prove the random neural network approximation results Theorem~\ref{thm:approxstatic}, Proposition~\ref{prop:BarronFininteDim} and Corollary~\ref{cor:RandomNNApprox}. Section~\ref{sec:ESN} is concerned with the dynamic case and contains the echo state network approximation results  Theorem~\ref{thm:ESNErrorBound} and Corollary~\ref{cor:universality}. Finally, in Section~\ref{sec:ESNOutput} we prove the approximation result for echo state networks with output feedback, Theorem~\ref{thm:approx}.

\subsection*{Notation}
{We use the notations $\Z_{-} = \{0,-1,-2,\ldots\}$, $\N = \{0,1,2,\ldots\}$, $\N^+ = \N \setminus \{0\}$.}
Throughout the article $d,m,N,q \in \N^+$ denote positive integers and $M$ is a positive constant. For any $R>0$ we denote by $B_R$ the Euclidean ball of radius $R$ around $0$ in the appropriate dimension (which will always be either mentioned explicitly or obvious from the context). Furthermore, $\lambda_q(B_R)$ denotes the volume of the ball $B_R \subset \R^q$. {Unless mentioned otherwise, $\|\cdot\|$ denotes the Euclidean norm. We denote by $\mathbb{M}_{m,n}$ the set of real $m \times n$ matrices. We fix a probability space $(\Omega,\Fc,\P)$ on which all random elements are defined.}

\section{Preliminaries on the dynamic setting} \label{sec:Preliminaries}
The goal of this section is to present some preliminaries on the dynamic case,  that is, when $\mathcal{X} \subset (\R^d)^{\Z_-}$. In this case, it is customary to refer to maps $H^* \colon \mathcal{X} \to \R^m$ as \textit{functionals}. While this article is mainly concerned with approximating functionals, let us point out that these are in one-to-one correspondence with so-called causal and time-invariant filters, see for instance \cite{RC8,RC7,RC6}. An important class of functionals is given by those satisfying $H^* = \arg \inf_{H \in \Hc} \mathcal{R}(H)$ for some class $\Hc$ of functionals and a risk map $\mathcal{R} \colon \Hc \to [0,\infty)$ that satisfies certain customary properties (see \cite{RC10} and references therein for details). Another important class is given by reservoir functionals that we recall in the next paragraphs.

\subsection{Reservoir systems and associated functionals}
Let $d, N \in \mathbb{N}^+$, $D_d \subset \R^d$, $D_N \subset \R^N$ and $F \colon D_N \times D_d\longrightarrow  D_N$, and for ${\bf z} \in (D_d)^{\Z_-}$ consider the system
\begin{equation} 
\label{eq:RCSystemDet}
\mathbf{x}_t =F(\mathbf{x}_{t-1}, {\bf z}_t), \quad t \in \Z_-.
\end{equation}
We say that \eqref{eq:RCSystemDet} satisfies the {\it echo state property}, if for any  ${\bf z} \in (D_d)^{\Z_-}$ there exists a unique $\mathbf{x} \in (D_N)^{\Z_-}$ such that \eqref{eq:RCSystemDet} holds. As the following Proposition shows, a sufficient condition guaranteeing this property is that $D_N$ is a closed ball and $F$ is contractive in the first argument. 

\begin{proposition}[Proposition~1 in \cite{RC10}] \label{prop:contractionEchoState} Let $R>0$, write $\overline{B_R}=\{{\bf u} \in \R^N \colon \|{\bf u}\|\leq R \}$ and suppose that $F \colon \overline{B_R} \times D_d \to \overline{B_R}$ is continuous. Assume that $F$ is a contraction in the first argument, that is, there exists $0<r<1$  such that for all ${\bf u},{\bf v} \in \overline{B_R}$, ${\bf w} \in D_d$ it holds that
\[ \|F({\bf u},{\bf w})- F({\bf v},{\bf w}) \| \leq r \|{\bf u}-{\bf v}\|. \]
Then the system \eqref{eq:RCSystemDet} has the echo state property. Furthermore, we can associate to it a unique mapping $H_F \colon (D_d)^{\Z_-} \to \R^N$ that is continuous (where $(D_d)^{\Z_-}$ is equipped with the product topology) and satisfies $H_F({\bf z}_{\cdot+t})=\mathbf{x}_t$, for all $t \in \Z_-$ (the symbol ${\bf z}_{\cdot+t} $ stands for the shifted semi-infinite sequence $(\ldots,{\bf z}_{-2+t},{\bf z}_{-1+t}, {\bf z} _t) \in (D_d)^{\Z_-}$).
\end{proposition}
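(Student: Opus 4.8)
The plan is to exhibit a solution of \eqref{eq:RCSystemDet} as the fixed point of a contraction on a suitable complete metric space, and then to extract $H_F$ together with its properties. Fix an input $\mathbf{z} \in (D_d)^{\Z_-}$ and consider, on the set $S := (\overline{B_R})^{\Z_-}$, the map $\mathcal{F}_{\mathbf{z}} \colon S \to S$ given by $(\mathcal{F}_{\mathbf{z}}(\mathbf{x}))_t = F(\mathbf{x}_{t-1},\mathbf{z}_t)$; this is well defined because $F$ takes values in $\overline{B_R}$, and $\mathbf{x} \in S$ solves \eqref{eq:RCSystemDet} for the input $\mathbf{z}$ if and only if it is a fixed point of $\mathcal{F}_{\mathbf{z}}$. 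The crucial choice is the metric: picking $w$ with $r < w < 1$, I equip $S$ with $d_w(\mathbf{x},\mathbf{y}) := \sup_{j \geq 0} w^j \|\mathbf{x}_{-j}-\mathbf{y}_{-j}\|$, which is finite since $\|\mathbf{x}_{-j}-\mathbf{y}_{-j}\| \leq 2R$, and with respect to which $S$ is complete (a $d_w$-Cauchy sequence is coordinatewise Cauchy, and its coordinatewise limit, which lies in $S$ because $\overline{B_R}$ is closed, is also its $d_w$-limit). Contractivity of $F$ in the first argument then gives $w^j\|(\mathcal{F}_{\mathbf{z}}(\mathbf{x}))_{-j}-(\mathcal{F}_{\mathbf{z}}(\mathbf{y}))_{-j}\| \leq r\,w^j\|\mathbf{x}_{-(j+1)}-\mathbf{y}_{-(j+1)}\| = (r/w)\,w^{j+1}\|\mathbf{x}_{-(j+1)}-\mathbf{y}_{-(j+1)}\| \leq (r/w)\,d_w(\mathbf{x},\mathbf{y})$, so $\mathcal{F}_{\mathbf{z}}$ is an $(r/w)$-contraction and the Banach fixed point theorem yields a unique solution $\mathbf{x}(\mathbf{z}) \in S$. (Uniqueness also follows directly: two solutions satisfy $\|\mathbf{x}_t-\mathbf{x}'_t\| \leq r^k\|\mathbf{x}_{t-k}-\mathbf{x}'_{t-k}\| \leq 2R\,r^k \to 0$.)

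Next I define $H_F \colon (D_d)^{\Z_-} \to \R^N$ by $H_F(\mathbf{z}) := \mathbf{x}(\mathbf{z})_0$. The shift identity $H_F(\mathbf{z}_{\cdot+t}) = \mathbf{x}(\mathbf{z})_t$ follows from uniqueness: if $\mathbf{x} = \mathbf{x}(\mathbf{z})$, then $s \mapsto \mathbf{x}_{s+t}$ solves \eqref{eq:RCSystemDet} for the shifted input $\mathbf{z}_{\cdot+t}$, hence equals $\mathbf{x}(\mathbf{z}_{\cdot+t})$, and evaluating at $s=0$ gives the claim. Uniqueness of a map with this property is then immediate, since any such map must return the $0$-th component of the (unique) solution associated with its argument; in particular continuity is not needed for this part.

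It remains to prove continuity of $H_F$, and this is the step requiring the most care: since $F$ is only assumed continuous — not Lipschitz — in its second argument, one cannot propagate a Lipschitz estimate through the recursion. I would argue as follows. Given $\mathbf{z},\mathbf{z}' \in (D_d)^{\Z_-}$ with solutions $\mathbf{x},\mathbf{x}'$, set $\eta_t := \|F(\mathbf{x}_{t-1},\mathbf{z}_t)-F(\mathbf{x}_{t-1},\mathbf{z}'_t)\|$ and $\delta_t := \|\mathbf{x}_t-\mathbf{x}'_t\|$; the triangle inequality and contractivity give $\delta_t \leq \eta_t + r\delta_{t-1}$, hence for every $k \geq 1$
\[
\delta_0 \;\leq\; \sum_{j=0}^{k-1} r^j \eta_{-j} \;+\; r^k\,\delta_{-k} \;\leq\; \sum_{j=0}^{k-1} \eta_{-j} \;+\; 2R\,r^k .
\]
Given $\varepsilon>0$, first choose $k$ with $2R\,r^k < \varepsilon/2$. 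Then, using joint continuity of $F$ and compactness of $\overline{B_R}$, for each fixed $j$ one has $\sup_{\mathbf{u} \in \overline{B_R}}\|F(\mathbf{u},\mathbf{w})-F(\mathbf{u},\mathbf{z}_{-j})\| \to 0$ as $\mathbf{w} \to \mathbf{z}_{-j}$, so $\eta_{-j} < \varepsilon/(2k)$ whenever $\mathbf{z}'_{-j}$ is close enough to $\mathbf{z}_{-j}$; these are finitely many constraints ($j=0,\dots,k-1$), hence satisfied on a product-topology neighbourhood of $\mathbf{z}$, on which $\delta_0 < \varepsilon$. Applying the same bound to the shifted inputs shows moreover that $\mathbf{z} \mapsto \mathbf{x}(\mathbf{z})$ is continuous from $(D_d)^{\Z_-}$ into $(\overline{B_R})^{\Z_-}$.

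The main obstacle is precisely this continuity argument: it requires splitting the perturbation into an infinite tail, absorbed by the geometric factor $r^k$ coming from the contraction, and a finite block of "fresh" error terms $\eta_{-j}$, which must be controlled uniformly over the state ball $\overline{B_R}$ using only joint continuity of $F$ together with compactness of $\overline{B_R}$. Existence, uniqueness, and the shift-equivariance of $H_F$, by contrast, reduce to the Banach fixed point theorem and the elementary estimate $\delta_t \leq r^k \delta_{t-k}$.
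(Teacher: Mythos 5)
Your proof is correct, and since the paper imports this statement from \cite{RC10} without reproducing a proof, there is nothing in the text to diverge from; your argument (Banach fixed point on $(\overline{B_R})^{\Z_-}$ with a weighted supremum metric for existence/uniqueness, shift-equivariance from uniqueness, and continuity via the split $\delta_0 \leq \sum_{j=0}^{k-1}\eta_{-j} + 2Rr^k$ with the finite block controlled uniformly over $\overline{B_R}$ by compactness) is exactly the standard one used for Proposition~1 of that reference. No gaps: in particular, the uniform-in-$\mathbf{u}$ continuity of $F(\mathbf{u},\cdot)$ that you invoke does follow from joint continuity plus compactness of $\overline{B_R}$ by a routine subsequence argument, so the finitely many product-topology constraints close the continuity step.
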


The functional $H_F$ in Proposition~\ref{prop:contractionEchoState} will be referred to as the \textit{reservoir functional} associated to $F$. In many situations one is also interested in considering the input/output system generated by \eqref{eq:RCSystemDet} together with a readout or observation map, that is, 
\begin{equation}\label{eq:RCSystemDetYeqn}
{\bf y}_t = h(\mathbf{x}_t), \quad t \in \Z_-,
\end{equation}
for some $h \colon D_N \to \R^m$. The reservoir functional associated to \eqref{eq:RCSystemDet}-\eqref{eq:RCSystemDetYeqn} is given as $h \circ H_F$. 

In the dynamic case the functionals $H$ that we use in this article to approximate a given (unknown) functional $H^*$ are always of the form $H = h \circ H_F$ for $h$ linear and $F$ suitably constructed.

\section{Integral representations of sufficiently regular functions}\label{sec:integralRepresentation}

A key ingredient in the proofs of the approximation results in this paper {are certain integral representations of sufficiently regular functions. We provide a first result in Proposition~\ref{prop:SmoothRepresentation} below. Variations of this result under weaker conditions will be developed later on in the article. In probabilistic terms, Proposition~\ref{prop:SmoothRepresentation}} shows that for all $R>0$, any sufficiently regular function $f$ can be represented on $B_M$ as the difference of two functions of type ${\bf v} \mapsto c\E[\max({\bf v}\cdot {\bf U} + \zeta,0)]$ for some constant $c>0$, some random variables ${\bf U}$ and $\zeta$ admitting a Lebesgue-density with certain integrability properties and satisfying  $\|{\bf U}\|\leq R$ and $|\zeta| \leq \max(M R,1)$, $\P$-a.s.

The integral representation below is related to the Radon-wavelet integral representation as used in \cite{Maiorov2000} and representations appearing in \cite{Barron1993,Klusowski2018} {and \cite[Theorem~2]{Barron1992}}.

{This integral representation will be crucial to obtain random neural network approximation results with weights sampled from a uniform distribution, see Proposition~\ref{prop:BarronFininteDim} below. We will also formulate similar results for more general sampling distributions and under weaker integrability conditions (see Theorem~\ref{thm:approxstatic} and Corollary~\ref{cor:RandomNNApprox} below).}

\begin{proposition}
\label{prop:SmoothRepresentation} 
Let $\sigma \colon \R \to \R$ be given as $\sigma(x)=\max(x,0)$.
Suppose that $f \colon \R^{q} \to \R$ satisfies for all ${\bf v} \in \R^q$ with $\|{\bf v}\| \leq M$ that
\[ f({\bf v}) = \int_{\R^q} e^{i {\bf v}\cdot {\bf w}} g({\bf w}) \d {\bf w}  \]
for some $g \colon \R^q \to \C$ satisfying 
\begin{equation}\label{eq:smoothnessAss} v^* = \int_{\R^q} \max(1,\|{{\bf w}}\|^{2q+6}) |g({\bf w})|^2 \d {{\bf w}} < \infty. \end{equation}
Then, for any $R>0$ there exists a measurable function $\pi \colon \R^{q+1} \to \R$ such that 
\begin{itemize}
\item[(i)] $\pi(\bm{\omega}) = 0$ for all $\bm{\omega} = ({\bf w},u) \in \R^{q} \times \R$  satisfying $\|{\bf w}\|>R$ or $|u|>\max(M R,1)$, 

\item[(ii)] \[ \int_{\R^{q+1}} \max(1,\|\bm{\omega}\|) |\pi(\bm{\omega})| \d \bm{\omega} < \infty,
\]
\item[(iii)] for all ${\bf v} \in \R^q$ with $\|{\bf v}\| \leq M$, 
\begin{equation} \label{eq:Brepr2} f({\bf v}) = \int_{\R^{q+1}} \pi(\bm{\omega}) \sigma(({\bf v},1) \cdot \bm{\omega}) \d \bm{\omega},
 \end{equation}
\item[(iv)] \[\begin{aligned}  \int_{\R^{q+1}}   \|\bm{\omega}\|^2 \pi(\bm{\omega})^2 \d \bm{\omega}  & \leq 8 (M^3+M+2)
\left( \int_{B_R} \max(1,\|{\bf w}\|^3) |g({\bf w})|^2 \d {\bf w} \right. \\ & \qquad \left. + \int_{\R^{q} \setminus B_R} \frac{\max(1,\|{\bf w}\|^{2q+5})}{R^{2q+2}} |g({\bf w})|^2  \d {\bf w}\right)
\end{aligned}\]

and thus in particular if $R \geq 1$ then \[ \int_{\R^{q+1}} \|\bm{\omega}\|^2 \pi(\bm{\omega})^2 \d \bm{\omega} \leq 8 (M^3+M+2) v^*.  \]
\end{itemize}
\end{proposition}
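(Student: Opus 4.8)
The plan is to use the classical Fourier/second-order-Taylor construction (in the spirit of Barron~\cite{Barron1993} and Klusowski--Barron~\cite{Klusowski2018}), modified so that the frequency variable stays inside $B_R$ and the unavoidable affine remainder is itself rewritten using $\sigma$. Since $f$ is real-valued, taking real parts in the hypothesis and writing $g({\bf w}) = |g({\bf w})|\, e^{i\vartheta({\bf w})}$ for a measurable phase $\vartheta$ gives, for $\|{\bf v}\|\leq M$,
\[ f({\bf v}) = \int_{\R^q} \cos({\bf v}\cdot{\bf w} + \vartheta({\bf w}))\, |g({\bf w})| \dd {\bf w}. \]
The elementary identity driving the argument is the second-order Taylor formula with integral remainder: for $\phi \in C^2$ and $t \in [-a,a]$,
\[ \phi(t) = \phi(0) + \phi'(0)\, t + \int_0^a \sigma(t-y)\,\phi''(y)\dd y + \int_{-a}^0 \sigma(y-t)\,\phi''(y)\dd y . \]
Applied to $\phi(t) = \cos(t + \vartheta({\bf w}))$ with $a = M\|{\bf w}\|$ and $t = {\bf v}\cdot{\bf w}$ (admissible since $|{\bf v}\cdot{\bf w}| \leq M\|{\bf w}\|$), and integrated over ${\bf w}$ against $|g({\bf w})| \dd{\bf w}$, this writes $f$ as the sum of an affine function $L({\bf v}) = \alpha + {\bf v}\cdot\bm{\beta}$ and two integrals of the form $\int\!\!\int \sigma(\pm {\bf v}\cdot{\bf w} \mp y)\, |g({\bf w})|\cos(y + \vartheta({\bf w}))\dd y \dd {\bf w}$.

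For the bulk $\|{\bf w}\| \leq R$ one changes variables $({\bf w},y) \mapsto \bm{\omega} = ({\bf w}',u) := (\pm{\bf w}, \mp y)$ in these integrals: each becomes $\int_{\R^{q+1}} \sigma(({\bf v},1)\cdot\bm{\omega})\, \pi_{\mathrm{bulk}}(\bm{\omega})\dd\bm{\omega}$ with $\pi_{\mathrm{bulk}}$ explicitly proportional (up to the reflection ${\bf w}'\mapsto-{\bf w}'$) to $|g({\bf w}')|$ times a bounded cosine factor, supported in $\{\|{\bf w}'\| \leq R,\ |u| \leq M\|{\bf w}'\|\}$, which lies inside the region required in (i). Using $|u| \leq M\|{\bf w}'\|$ and $\|\bm{\omega}\|^2 \leq (1+M^2)\|{\bf w}'\|^2$ there, and carrying out the $u$-integral (whose length is $M\|{\bf w}'\|$), yields exactly the first term on the right-hand side of (iv), with room to spare in the constant. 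The delicate part is the high-frequency piece $\|{\bf w}\| > R$, for which ${\bf w}'$ would have norm $>R$ and violate (i): there I would first renormalize, $\cos({\bf v}\cdot{\bf w} + \vartheta) = \cos(\|{\bf w}\|({\bf v}\cdot\widehat{\bf w}) + \vartheta)$, apply the Taylor identity in the variable $s = {\bf v}\cdot\widehat{\bf w} \in [-M,M]$ (so that $\phi''$ now carries a factor $\|{\bf w}\|^2$), and then use the positive homogeneity $\sigma(s-y) = \rho^{-1}\sigma(\rho s - \rho y)$ together with a radial reparametrization $\rho = \rho(\|{\bf w}\|) \in (0,R]$, e.g.\ $\rho(r) = R^2/r$, to bring the ridge direction $\rho\widehat{\bf w}$ back into $B_R$ while keeping the bias in $[-MR,MR]$. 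The Jacobian of $({\bf w},y) \mapsto (\rho\widehat{\bf w}, -\rho y)$, together with the factor $\|{\bf w}\|^2$ from $\phi''$ and the $\rho^{-1}$ factors, is what forces the weight $\max(1,\|{\bf w}\|^{2q+5})R^{-(2q+2)}$ in the tail term of (iv).

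It remains to rewrite the affine remainder $L({\bf v}) = \alpha + {\bf v}\cdot\bm{\beta}$ (a short computation shows $\alpha = f(0)$, $\bm{\beta} = \nabla f(0)$, which are controlled by $\int_{\R^q}|g|$ and $\int_{\R^q}\|{\bf w}\||g|$). On the cone $\{({\bf w},u)\colon |u| \geq M\|{\bf w}\|\}$ one has $\sigma(({\bf v},1)\cdot\bm{\omega}) = {\bf v}\cdot{\bf w} + u$ for all $\|{\bf v}\|\leq M$, so I would represent $L$ by a density $\pi_{\mathrm{aff}}$ supported in this cone intersected with $B_R \times [-\max(MR,1),\max(MR,1)]$, chosen to satisfy the moment identities $\int \pi_{\mathrm{aff}}(\bm{\omega})\,{\bf w}\dd\bm{\omega} = \bm{\beta}$ and $\int \pi_{\mathrm{aff}}(\bm{\omega})\,u\dd\bm{\omega} = \alpha$ while being spread over as large a subregion as possible; the Cauchy--Schwarz inequality then bounds $\int \|\bm{\omega}\|^2 \pi_{\mathrm{aff}}^2$ by $|\alpha|^2 + \|\bm{\beta}\|^2$ divided by the volume of that subregion, and splitting $f(0) = \int g$, $\nabla f(0) = \int i{\bf w}g$ at $\|{\bf w}\| = R$ and applying Cauchy--Schwarz against the weights $\max(1,\|{\bf w}\|^{2q+6})$ bounds $|\alpha|^2 + \|\bm{\beta}\|^2$ by the two integrals of $g$ appearing in (iv). Setting $\pi = \pi_{\mathrm{bulk}} + \pi_{\mathrm{tail}} + \pi_{\mathrm{aff}}$, property (i) holds by construction, (iii) is the identity just assembled, (ii) follows because $\pi$ is supported in a fixed bounded set with $\int|\pi| < \infty$ (directly from the explicit formulae, using $\int_{B_R}|g| \leq \lambda_q(B_R)^{1/2}\|g\|_{L^2(B_R)}$ and the tail estimate), and (iv) follows by summing the three weighted-$L^2$ bounds and collecting constants into $8(M^3+M+2)$.

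I expect the main obstacle to be the high-frequency change of variables: it must be arranged so that the resulting density is genuinely absolutely continuous on $\R^{q+1}$ — which rules out the naive choice $\rho \equiv R$, since that concentrates mass on the sphere $\|{\bf w}'\| = R$ — while at the same time respecting the support constraints and producing exactly the powers of $R$ and $\|{\bf w}\|$ recorded in (iv). A secondary difficulty is choosing $\pi_{\mathrm{aff}}$ so that its weighted $L^2$-norm fits within the slack left by the constant $8(M^3+M+2)$ uniformly in $M$, $R$, $q$; the awkward case is $\max(MR,1) = MR$ with $\|{\bf w}\|$ near $R$, where the available affine region degenerates and $\pi_{\mathrm{aff}}$ must be placed on shorter, rescaled directions.
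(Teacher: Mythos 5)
Your treatment of the nonlinear part coincides with the paper's: the second-order Taylor identity with ReLU remainder is the real-variable form of the paper's identity $-\int_0^\infty (z-u)^+e^{iu}+(-z-u)^+e^{-iu}\,\d u=e^{iz}-iz-1$, and your radial reparametrization $\rho(r)=R^2/r$ combined with the homogeneity $\sigma(s-y)=\rho^{-1}\sigma(\rho s-\rho y)$ is exactly the paper's Kelvin-type inversion $\varphi({\bf w})=R^2{\bf w}/\|{\bf w}\|^2$ with Jacobian $R^{2q}/\|{\bf w}\|^{2q}$; your remark that $\rho\equiv R$ must be avoided, and your bookkeeping of the powers producing the weight $\max(1,\|{\bf w}\|^{2q+5})R^{-(2q+2)}$, are both on target. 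Where you genuinely depart from the paper is the affine remainder $f(0)+\nabla f(0)\cdot{\bf v}$, and that is where your plan has a real gap. First, a small error: $\sigma(({\bf v},1)\cdot\bm{\omega})={\bf v}\cdot{\bf w}+u$ for all $\|{\bf v}\|\le M$ only on the upper half-cone $u\ge M\|{\bf w}\|$; on $u\le -M\|{\bf w}\|$ the ReLU vanishes identically, so $\pi_{\mathrm{aff}}$ must live on the upper half-cone alone. More seriously, the moment-matching construction you propose produces, via Cauchy--Schwarz, a bound of the form $(|f(0)|^2+\|\nabla f(0)\|^2)/\mathrm{Vol}(S)$ for the admissible region $S$, and after bounding $|f(0)|^2+\|\nabla f(0)\|^2$ by weighted $L^2$-norms of $g$ you pick up ratios of volumes and of the constants $\int_{\R^q\setminus B_R}\|{\bf w}\|^{-2q-5}\d{\bf w}$ that carry explicit $q$-dependence (e.g.\ $\mathrm{Vol}(S)\approx MR\,\lambda_q(B_R)/(q+1)$ when $MR>1$). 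Nothing in your sketch shows these cancel to yield the dimension-free constant $8(M^3+M+2)$ claimed in (iv); you flag this as a ``secondary difficulty,'' but it is in fact the step most likely to fail.

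The paper sidesteps this entirely by representing the affine part over the \emph{same} frequency measure as $f$ itself, namely
\[
(\nabla f)(0)\cdot{\bf v}+f(0)=\int_{\R^q}\int_0^1\bigl[({\bf v}\cdot{\bf w}+u)^+-(-{\bf v}\cdot{\bf w}-u)^+\bigr]\bigl(\mathrm{Re}[g({\bf w})]-\mathrm{Im}[g({\bf w})]\bigr)\,\d u\,\d{\bf w},
\]
which yields a density $\alpha_2({\bf w},u)=\mathbbm{1}_{[0,1]}(u)\tilde g({\bf w})-\mathbbm{1}_{[-1,0]}(u)\tilde g(-{\bf w})$ bounded pointwise by $\sqrt 2\,|g({\bf w})|$ and supported in $|u|\le 1$; it is then pushed through the same inversion as the main term, so its contribution to (iv) is controlled by the same $|g|^2$ integrals with no volume or $q$-dependent factors (this is the source of the ``$+2$'' in $M^3+M+2$). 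If you replace your cone construction by this device --- or, alternatively, carry out the minimal-norm moment-matching computation and verify the constants explicitly --- your argument closes; as written, property (iv) with the stated constant is not established.
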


\begin{remark} A sufficient condition for \eqref{eq:smoothnessAss} to be satisfied is that $f \in L^1(\R^q)$ has an integrable Fourier transform and belongs to the Sobolev space $W^{q+3,2}(\R^{q})$, see for instance \cite[Theorem~6.1]{Folland1995} {or Corollary~\ref{cor:sobolev} below.}
\end{remark}

\begin{remark}
{We now emphasize two points concerning the condition~\eqref{eq:smoothnessAss}. First, this condition~\eqref{eq:smoothnessAss} is stronger than the condition $$\int_{\R^q} \|{{\bf w}}\| |g({\bf w})| \d {{\bf w}} < \infty$$ appearing in the well-known work by Barron \cite{Barron1993} (see e.g. \eqref{eq:auxEq66} below for an argument). However, this stronger condition~\eqref{eq:smoothnessAss}  also allows us to obtain a stronger conclusion. More specifically, whereas \cite{Barron1993} proves that there exist neural network weights ensuring a certain approximation accuracy,  we will see how Proposition~\ref{prop:BarronFininteDim} below provides under condition~\eqref{eq:smoothnessAss} a \emph{constructive procedure for the neural network weights}. }

{Second, we now discuss why the condition~\eqref{eq:smoothnessAss} is necessary. The properties of $\pi$ derived in Proposition~\ref{prop:SmoothRepresentation} are required to guarantee that the neural network weights can be sampled from a uniform distribution in Proposition~\ref{prop:BarronFininteDim} below. This is ensured, on the one hand, by the compact support of $\pi$ (see Proposition~\ref{prop:SmoothRepresentation}(i)), which is achieved by a change of variables in the proof of Proposition~\ref{prop:SmoothRepresentation}. On the other hand, to carry out the importance sampling procedure in the proof of Proposition~\ref{prop:BarronFininteDim}, the square integrability condition on $\pi$ (see Proposition~\ref{prop:SmoothRepresentation}(i)) is needed. To obtain square integrability of $\|{\bf w}\|\pi({\bf w})$ we need condition~\eqref{eq:smoothnessAss} (see \eqref{eq:auxEq63}, \eqref{eq:auxEq65} in the proof below) since the Jacobian determinant appearing in the change of variables mentioned above  makes the term $\|{\bf w}\|^{2q+2}$ appear.}	
\end{remark}

\begin{proof} The proof consists of two steps. In a first step, we use a modification of the argument in \cite{Klusowski2018} to obtain a representation of type \eqref{eq:Brepr2}, but with corresponding $\pi$ not necessarily satisfying (i). Then a suitable change of variables allows to obtain a representation with the desired properties (i)-(iv). 

Beforehand, let us verify that \eqref{eq:smoothnessAss} implies that 
\begin{equation}\label{eq:auxEq66}
\int_{\R^q} |g({\bf w})| \d {\bf w} < \infty \quad  \text{and} \quad \int_{\R^q} \|{\bf w}\|^3 |g({\bf w})| \d {\bf w} < \infty.
\end{equation} Indeed, by first splitting the integral into an integral over ${B_1} \subset \R^q$ and $\R^q \setminus B_1$ and then applying H\"older's inequality one obtains
\[\begin{aligned} \int_{\R^q} (1+\|{\bf w}\|^3)|g({\bf w})| \d {\bf w} & \leq 2 \left(\int_{B_1} |g({\bf w})|^2 \d {\bf w}\right)^{1/2} \lambda_q(B_1)^{1/2} \\ & \qquad + 2 \int_{\R^q \setminus B_1} \|{\bf w}\|^3 |g({\bf w})| \d {\bf w}, 
 \end{aligned} \]
 where the last term can be estimated by applying H\"older's inequality once more to obtain 
\[ \int_{\R^q \setminus B_1} \|{\bf w}\|^3 |g({\bf w})| \d {\bf w} \leq \left(\int_{\R^q \setminus B_1} \|{\bf w}\|^{6+2q} |g({\bf w})|^2 \d {\bf w}\right)^{1/2}  \left(\int_{\R^q \setminus B_1} \|{\bf w}\|^{-2q} \d {\bf w} \right)^{1/2}
\]
and the integrals are finite thanks to the hypothesis \eqref{eq:smoothnessAss}. 

\medskip

\noindent \textit{Step 1:}
Firstly, note that for any $z \in \R$ one may write
\begin{equation}\label{eq:auxEq60} -\int_0^\infty (z-u)^+ e^{i u} + (-z-u)^+ e^{-iu} \d u = e^{i z} - i z - 1, \end{equation}
since for $z > 0$ one has 
\[ \int_0^z (z-u) e^{i u }\d u = - \frac{1}{i} z + \frac{1}{i} \int_0^z e^{i u} \d u = i z - e^{i z} + 1 \]
and for $z < 0$ one calculates
\[\int_0^{-z} (-z-u) e^{-iu} \d u = -\frac{1}{i} z -  \frac{1}{i} \int_0^{-z} e^{-i u} \d u = i z - e^{i z } + 1. \]
Secondly, for any ${\bf v} \in \R^q$ one obtains by Tonelli's theorem and \eqref{eq:auxEq66} that
\[\begin{aligned} \int_{\R^q \times [0,\infty)} & |({\bf v}\cdot{\bf w}-u)^+ e^{i u} + (-{\bf v}\cdot{\bf w}-u)^+ e^{-iu}||g({\bf w})| \d {\bf w} \d u 
\\ & \leq \int_{\R^q} \int_0^{|{\bf v}\cdot{\bf w}|} (|{\bf v}\cdot{\bf w}|-u) |g({\bf w})| \d u \d {\bf w} \\
& \leq \frac{\|{\bf v}\|^2}{2} \int_{\R^q} \|{\bf w}\|^2 |g({\bf w})| \d {\bf w} < \infty.
\end{aligned} \]
Hence one may combine Fubini's theorem, \eqref{eq:auxEq66} and \eqref{eq:auxEq60} to obtain for any ${\bf v} \in \R^q$
\[\begin{aligned} - \int_{\R^q \times [0,\infty)} &  [({\bf v}\cdot{\bf w}-u)^+ e^{i u} + (-{\bf v}\cdot{\bf w}-u)^+ e^{-iu}]g({\bf w}) \d {\bf w} \d u \\ &  = \int_{\R^q} (e^{i {\bf v}\cdot{\bf w}} - i {\bf v}\cdot{\bf w} - 1) g({\bf w}) \d {\bf w} = f({\bf v}) - (\nabla f)(0) \cdot {\bf v} - f(0). \end{aligned} \]
Based on this integral representation of $f$ we will now define $\alpha$ appropriately to obtain 
 \[
f({\bf v}) =    \int_{\R^{q+1}}   \sigma(({\bf v},1)\cdot({\bf w},u)) \alpha({\bf w},u) \d {\bf w} \d u 
 \]
 for all ${\bf v} \in \R^q$ with $\|{\bf v}\| \leq M$. To do this, first note that for all ${\bf v} \in \R^q$ with $\|{\bf v}\| \leq M$ and all $({\bf w},u)\in \R^{q+1}$ with $u \leq - M \|{\bf w}\|$ we have ${\bf v} \cdot {\bf w} +u \leq 0$ and therefore $\sigma(({\bf v},1)\cdot({\bf w},u))=0$. Setting
\[ \alpha_1({\bf w},u) = - [\mathrm{Re}(e^{-i u} g({\bf w})) +  \mathrm{Re}(e^{i u} g(-{\bf w}))] \mathbbm{1}_{(-M \|{\bf w}\|,0]}(u)  \] and changing variables we thus obtain 
\begin{equation} \label{eq:auxEq61} \begin{aligned} f({\bf v}) &  - (\nabla f)(0) \cdot {\bf v} - f(0) =  \int_{\R^{q+1}}   \sigma(({\bf v},1)\cdot({\bf w},u)) \alpha_1({\bf w},u) \d {\bf w} \d u.
\end{aligned} \end{equation}
In addition $f(0), (\nabla f)(0) \in \R$ and therefore one has that $\int_{\R^q} \mathrm{Im}[g({\bf w})]\d {\bf w} = 0$ and $\int_{\R^q}  ({\bf v}\cdot{\bf w})\mathrm{Re}[g({\bf w})]\d {\bf w} = 0$ . This yields
\begin{equation}\label{eq:auxEq62}\begin{aligned} (\nabla f)(0) \cdot {\bf v} & + f(0) \\ &  = \int_{\R^q} {\bf v}\cdot{\bf w} (-\mathrm{Im}[g({\bf w})]) + \mathrm{Re}[g({\bf w})]\d {\bf w}  \\
&  = \int_{\R^q} \int_0^1 ({\bf v}\cdot{\bf w}+u) (\mathrm{Re}[g({\bf w})]-\mathrm{Im}[g({\bf w})]) \d u \d {\bf w}  
\\ & = \int_{\R^q} \int_0^1 [({\bf v}\cdot{\bf w}+u)^+ - (-{\bf v}\cdot{\bf w}-u)^+] (\mathrm{Re}[g({\bf w})]-\mathrm{Im}[g({\bf w})]) \d u \d {\bf w}   . \end{aligned} \end{equation}
Defining $\tilde{g}({\bf w}) = \mathrm{Re}[g({\bf w})]-\mathrm{Im}[g({\bf w})]$ and
\[ \alpha_2({\bf w},u) = \mathbbm{1}_{[0,1]}(u)\tilde{g}({\bf w}) - \mathbbm{1}_{[-1,0]}(u) \tilde{g}(-{\bf w}) \] 
we may rewrite \eqref{eq:auxEq62} as
\[ \begin{aligned}
(\nabla f)(0) \cdot {\bf v} + f(0) & =  \int_{\R^{q+1}}   \sigma(({\bf v},1)\cdot({\bf w},u)) \alpha_2({\bf w},u) \d {\bf w} \d u.
\end{aligned}\]
 Combining this with \eqref{eq:auxEq61} and setting $\alpha = \alpha_1 + \alpha_2$ thus yields 
 \[
f({\bf v}) =    \int_{\R^{q+1}}   \sigma(({\bf v},1)\cdot({\bf w},u)) \alpha({\bf w},u) \d {\bf w} \d u .
 \]

\medskip

\noindent \textit{Step 2:} For $\bm{\omega} = ({\bf w},u) \in \R^{q} \times \R$ define
\[ \pi({\bf w},u) = \mathbbm{1}_{B_R \setminus \{0\}}({\bf w})  \left[\alpha(\bm{\omega}) + \frac{R^{2(q+2)}}{\|{\bf w}\|^{2(q+2)}} \alpha \left(\frac{R^2 \bm{\omega}}{\|{\bf w}\|^2}\right)\right].
\] 
Then clearly $ \pi({\bf w},u) = 0$ if $\|{\bf w}\|>R$. If $|u|>\max(M R,1)$ and $\|{\bf w}\|\leq R$ then it follows that $|u|>M \|{\bf w}\|$ and $|u|R^2/\|{\bf w}\|^2>1$ and hence $\alpha_1({\bf w},u) = \alpha_2({\bf w},u) =\alpha_1(R^2{\bf w}/\|{\bf w}\|^2,R^2 u/\|{\bf w}\|^2) = \alpha_2(R^2{\bf w}/\|{\bf w}\|^2,R^2 u/\|{\bf w}\|^2)=0$. This shows (i).  Next, define the mapping
\[ \varphi \colon B_R \setminus \{0\} \to \R^{q} \setminus \overline{B_R}, \quad \varphi({\bf w}) = \frac{R^2{\bf w}}{\|{\bf w}\|^2} \]
and note that $\varphi$ is a diffeomorphism satisfying
\[|\det(\varphi'({\bf w}))| = R^{2q} \left|\det\left(\mathbbm{1}_{q \times q} \frac{1}{\|{\bf w}\|^2} - 2 \frac{{\bf w} {\bf w}^\tr}{\|{\bf w}\|^4}\right)\right| = \frac{R^{2q}}{\|{\bf w}\|^{2q}}.
\] 
The change of variables formula hence implies for any measurable function $h \colon \R^{q} \to \R$ that
\[\begin{aligned}
\int_{\R^{q} \setminus B_R} h({\bf w}) \d {\bf w} = \int_{B_R} h(\varphi({\bf w})) \frac{R^{2q}\d {\bf w}} {\|{\bf w}\|^{2q}}.
\end{aligned}
\]
Applying this and the substitution $R^2 \tilde{u} = u \|{\bf w}\|^2$ one obtains that
\[\begin{aligned} & \int_{\R^{q+1}}  \max(1,\|\bm{\omega}\|) |\pi(\bm{\omega})| \d \bm{\omega} \\ &  \quad \leq \int_{B_R} \int_\R \frac{R^{2} \max(1,\|({\bf w},\tilde{u})\|)}{\|{\bf w}\|^{2}} \left| \alpha \left(R^2\frac{({\bf w},\tilde{u})}{\|{\bf w}\|^2}\right)\right| \frac{R^{2(q+1)} \d \tilde{u} \d {\bf w}}{\|{\bf w}\|^{2(q+1)}} \\ & \qquad + \int_{B_R\times\R}  (1+\|\bm{\omega}\|^2) |\alpha(\bm{\omega})| \d \bm{\omega} 
\\ & \quad= \int_{\R^{q} \setminus B_R} \int_\R  \max(\frac{\|{\bf w}\|^2}{R^2},\|({\bf w},u)\|) |\alpha ({\bf w},u)|  \d u \d {\bf w}  + \int_{B_R\times\R}  (1+\|\bm{\omega}\|^2) |\alpha(\bm{\omega})| \d \bm{\omega}
\\ & \quad \leq 12 \max(1,R^{-2}) \int_{\R^{q}} \int_0^{\max(1,M\|{\bf w}\|)}  (1+\|{\bf w}\|^2+u^2) |g({\bf w})|  \d u \d {\bf w}
\\ & \quad \leq 12(M^3+M+1)\max(1,R^{-2}) \int_{\R^{q}} (1+\|{\bf w}\|^3) |g({\bf w})|  \d u \d {\bf w} < \infty.
\end{aligned}\]

This shows (ii). To deduce the representation (iii) one may now use Step 1 and apply the same substitution as above to the first term to obtain for any ${\bf v} \in \R^q$ with $\|{\bf v}\| \leq M$ that
\[\begin{aligned}
& f({\bf v}) \\  & \quad =    \int_{\R^{q} \setminus B_R} \int_\R  \sigma(({\bf v},1)\cdot({\bf w},u)) \alpha({\bf w},u) \d u \d {\bf w}  + \int_{B_R \times \R} \sigma(({\bf v},1)\cdot({\bf w},u)) \alpha({\bf w},u) \d {\bf w} \d u
\\ & \quad = \int_{B_R} \int_\R   \sigma(({\bf v},1)\cdot (\varphi({\bf w}),\frac{R^2\tilde{u}}{\|{\bf w}\|^2})) \alpha(\varphi({\bf w}),\frac{R^2\tilde{u}}{\|{\bf w}\|^2}) \frac{R^{2(q+1)}\d \tilde{u} \d {\bf w}}{\|{\bf w}\|^{2(q+1)}} \\ & \qquad + \int_{B_R\times\R} \sigma(({\bf v},1)\cdot \bm{\omega}) \alpha(\bm{\omega}) \d \bm{\omega}
\\ & \quad = \int_{B_R\times \R} \sigma(({\bf v},1)\cdot \bm{\omega}) \pi(\bm{\omega}) \d \bm{\omega}.
\end{aligned} \]
It remains to prove (iv). Applying again the change of variables formula and using $\|\varphi({\bf w})\| = R^2 \|{\bf w}\|^{-1}$ yields
\begin{equation} \label{eq:auxEq63} \begin{aligned} \int_{\R^{q+1}} & \|\bm{\omega}\|^2 \pi(\bm{\omega})^2 \d \bm{\omega} \\ & \leq 2 \int_{B_R \times \R} \|\bm{\omega}\|^2 \alpha(\bm{\omega})^2 \d \bm{\omega}  \\ & \qquad + 2 \int_{B_R} \int_\R  \frac{R^{2q+2}}{\|{\bf w}\|^{2q+2}} \left\|\frac{R^2(\tilde{u},{\bf w})}{\|{\bf w}\|^2}\right\|^2 \alpha \left(\frac{R^2(\tilde{u},{\bf w})}{\|{\bf w}\|^2}\right)^2 \frac{R^{2(q+1)}\d \tilde{u} \d {\bf w}}{\|{\bf w}\|^{2(q+1)}}
\\ & = 2 \int_{B_R \times \R} \|\bm{\omega}\|^2 \alpha(\bm{\omega})^2 \d \bm{\omega} \\ & \qquad + 2 R^{-(2q+2)} \int_{\R^{q} \setminus B_R} \int_\R  \left[u^2 \|{\bf w}\|^{2q+2}+\|{\bf w}\|^{2q+4}\right] \alpha(u,{\bf w})^2 \d u \d {\bf w}. \end{aligned}\end{equation}
To estimate the first term, we note that $ |\alpha(\bm{\omega})|^2 \leq 2|\alpha_1(\bm{\omega})|^2+2|\alpha_2(\bm{\omega})|^2$ and thus

\begin{equation} \label{eq:auxEq64} \begin{aligned}
\int_{B_R \times \R} &  \|\bm{\omega}\|^2 \alpha(\bm{\omega})^2 \d \bm{\omega} \\ & \leq 4 \int_{B_R} \int_{\R} (u^2+\|{\bf w}\|^2) [|g({\bf w})|^2\mathbbm{1}_{[0,M \|{\bf w}\|]}(u) + |\tilde{g}({\bf w})|^2 \mathbbm{1}_{[0,1]}(u)] \d u \d {\bf w}
\\ & \leq 4(M^3+M) \int_{B_R} \|{\bf w}\|^3 |g({\bf w})|^2 \d {\bf w} + 4 \int_{B_R} (1+\|{\bf w}\|^2) |g({\bf w})|^2 \d {\bf w}.
\end{aligned} \end{equation}
Furthermore, one estimates the integral in the second term in \eqref{eq:auxEq63} as  
\begin{equation} \label{eq:auxEq65} \begin{aligned}
 & \frac{1}{4}\int_{\R^{q} \setminus B_R} \int_\R    \left[u^2 \|{\bf w}\|^{2q+2}+\|{\bf w}\|^{2q+4}\right] \alpha(u,{\bf w})^2 \d u \d {\bf w} 
 \\ & \leq  \int_{\R^{q} \setminus B_R} \int_\R  \left[u^2 \|{\bf w}\|^{2q+2}+\|{\bf w}\|^{2q+4}\right]  [|g({\bf w})|^2\mathbbm{1}_{[0,M \|{\bf w}\|]}(u) + |\tilde{g}({\bf w})|^2 \mathbbm{1}_{[0,1]}(u)] \d u \d {\bf w}
 \\& \leq (M^3+M) \int_{\R^{q} \setminus B_R} \|{\bf w}\|^{2q+5} |g({\bf w})|^2  \d {\bf w} + 4 \int_{\R^{q} \setminus B_R} (\|{\bf w}\|^{2q+2}+\|{\bf w}\|^{2q+4}) |g({\bf w})|^2 \d {\bf w}.
 \end{aligned}
\end{equation}
Combining \eqref{eq:auxEq63}, \eqref{eq:auxEq64}, and \eqref{eq:auxEq65} one obtains

\[\begin{aligned} \int_{\R^{q+1}}   \|\bm{\omega}\|^2 \pi(\bm{\omega})^2 \d \bm{\omega} & \leq 8(M^3+M+2)\left( \int_{B_R} \max(1,\|{\bf w}\|^3) |g({\bf w})|^2 \d {\bf w} \right. \\ & \qquad + \left. \int_{\R^{q} \setminus B_R} \frac{\max(1,\|{\bf w}\|^{2q+5})}{R^{2q+2}} |g({\bf w})|^2  \d {\bf w}\right), \end{aligned}\]
 as claimed. \qed
\end{proof}

\section{Approximation Error Estimates for Random Neural Networks}\label{sec:Static}

In this section we derive random neural network approximation bounds for sufficiently regular functions. We first introduce the setting and prove a result for separable Hilbert spaces $\mathcal{X}$ and general sampling distributions (Theorem~\ref{thm:approxstatic} below). In Section~\ref{sec:Rqcase} we then consider the special case $\mathcal{X}=\R^q$ and derive results for weights sampled from a uniform distribution (see Proposition~\ref{prop:BarronFininteDim} and Corollary~\ref{cor:RandomNNApprox}). The dependence of the approximation bounds on the input dimension is explicit and thus these results may be used to decide when the approximation by random neural networks for classes of functions (parametrized by the input dimension) suffer from the curse of dimensionality.
Finally, in Section~\ref{sec:staticUniversal} we deduce as a corollary of the results in Section~\ref{sec:Rqcase} that neural networks with randomly generated inner weights and in which only the last layer is trained possess universal approximation capabilities. This is a new version of the $L^2$-universal approximation theorem for neural networks from \cite{hornik1991}.

\subsection{Setting and result for separable Hilbert spaces} \label{sec:hilbertcase}
Suppose $\mathcal{X}$ is a separable Hilbert space {with inner product $\langle \cdot, \cdot \rangle$ and associated norm $\|\cdot\|$.}
Let $({\bf A}_1,\zeta_1),\ldots,({\bf A}_N,\zeta_N)$ be i.i.d.\ $\mathcal{X} \times \R$-valued random variables with distribution $\pi$, a probability measure on $\mathcal{B}(\mathcal{X} \times \R) = \mathcal{B}(\mathcal{X}) \otimes \mathcal{B}(\R)$ (see \cite[Lemma~1.2]{Kallenberg2002}). Denote by ${\bf A} \colon \mathcal{X} \to \R^N$ the random linear map with ${\bf A} {\bf z} = (\langle {\bf A}_1, {\bf z} \rangle, \ldots, \langle {\bf A}_N, {\bf z} \rangle )$ and set $\bm{\zeta}=(\zeta_1,\ldots,\zeta_N)$. Then for any $\mathbb{M}_{m , N}$-valued random matrix ${\bf W}$ we may define a random function $H^{{\bf A},\bm{\zeta}}_{\bf W} \colon \mathcal{X} \to \R^m$ by
\begin{equation}
\label{random neural network}
H^{{\bf A},\bm{\zeta}}_{\bf W}({\bf z})= {\bf W}  \bm{\sigma}({\bf A} {\bf z} + \bm{\zeta}), \quad {\bf z} \in \mathcal{X}. 
\end{equation}
Such a function will be called a {\it random neural network} with $N$ hidden nodes and inputs in $\mathcal{X}$. Clearly, if $\mathcal{X} = \R^d$, then this is a classical single-hidden-layer feedforward neural network with inputs in $\R^d$. When $ \boldsymbol{\sigma}: \mathbb{R}^N \longrightarrow \mathbb{R} ^N $ is obtained as the componentwise application of the  rectifier function $\sigma \colon \R \to \R$ given by $\sigma(x):=\max(x,0)$ we say that \eqref{random neural network} is a {\it ReLU} neural network.

We will be interested in using random neural networks to approximate a (unknown) function $H^* \colon \mathcal{X} \to \R^m$. In applications, the procedure is typically as follows: in a first step the network parameters ${\bf A},\bm{\zeta}$ are generated randomly. Then these are considered as fixed and the matrix ${\bf W}$ is trained (given the realizations of ${\bf A},\bm{\zeta}$) in order to approximate $H^*$ as well as possible. With this in mind, in what follows we will be mainly interested in measuring the approximation error between $H^{{\bf A},\bm{\zeta}}_{\bf W}$ and $H^*$ \textit{conditional on} ${\bf A},\bm{\zeta}$ and with respect to the $L^2(\mathcal{X},\mu_{{\bf Z}})$-norm for a probability measure $\mu_{{\bf Z}}$ on $(\mathcal{X},\mathcal{B}(\mathcal{X}))$. Thus, throughout this section, ${\bf Z}$ is an arbitrary $\mathcal{X}$-valued random variable. We denote by $\mu_{{\bf Z}}$ its distribution. The only assumptions we impose is that $\|{\bf Z}\|\leq M$, $\P$-a.s. and that ${\bf Z}$ is independent of $({\bf A}_1,\zeta_1),\ldots,({\bf A}_N,\zeta_N)$.
The following Lemma guarantees in particular that $H_{\bf W}^{{\bf A},\bm{\zeta}}({\bf Z})$ is a random variable, that is, $\Fc$-measurable.

\begin{lemma}
$H^{{\bf A},\bm{\zeta}}_{\bf W}$ is product-measurable, that is, the mapping $(\omega,{\bf z}) \ni \Omega \times \mathcal{X} \mapsto H^{{\bf A}(\omega),\bm{\zeta}(\omega)}_{\bf W(\omega)}({\bf z}) \in \R^{m}$ is $\Fc \otimes \mathcal{B}(\mathcal{X})$-measurable.
\end{lemma}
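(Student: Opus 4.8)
The plan is to reduce the claim to a composition of standard measurability facts. The map $(\omega,{\bf z}) \mapsto H^{{\bf A}(\omega),\bm{\zeta}(\omega)}_{{\bf W}(\omega)}({\bf z}) = {\bf W}(\omega)\,\bm{\sigma}\bigl({\bf A}(\omega){\bf z} + \bm{\zeta}(\omega)\bigr)$ is built from three pieces: the random linear evaluation $(\omega,{\bf z}) \mapsto {\bf A}(\omega){\bf z} = (\langle {\bf A}_1(\omega),{\bf z}\rangle,\ldots,\langle {\bf A}_N(\omega),{\bf z}\rangle) \in \R^N$, the affine shift by $\bm{\zeta}(\omega)$, the componentwise application of $\sigma$, and finally left multiplication by ${\bf W}(\omega)$. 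Since $\sigma$ is continuous, matrix multiplication and vector addition are continuous, and compositions of measurable maps with continuous maps are measurable, the only genuinely non-trivial point is the joint measurability of $(\omega,{\bf z}) \mapsto \langle {\bf A}_i(\omega),{\bf z}\rangle$.

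For that step I would argue as follows. Fix $i$. Since $\mathcal{X}$ is a separable Hilbert space, fix a countable orthonormal basis $(e_k)_{k\ge 1}$. Then $\langle {\bf A}_i(\omega),{\bf z}\rangle = \sum_{k\ge 1} \langle {\bf A}_i(\omega),e_k\rangle \langle {\bf z},e_k\rangle$, where the series converges for every $(\omega,{\bf z})$. Each coordinate functional $\langle \cdot,e_k\rangle \colon \mathcal{X}\to\R$ is continuous, hence Borel, so $\omega \mapsto \langle {\bf A}_i(\omega),e_k\rangle$ is $\Fc$-measurable and ${\bf z}\mapsto\langle {\bf z},e_k\rangle$ is $\mathcal{B}(\mathcal{X})$-measurable; therefore the partial sums $(\omega,{\bf z}) \mapsto \sum_{k=1}^{n} \langle {\bf A}_i(\omega),e_k\rangle \langle {\bf z},e_k\rangle$ are $\Fc\otimes\mathcal{B}(\mathcal{X})$-measurable, and the pointwise limit of measurable functions is measurable. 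This gives joint measurability of $(\omega,{\bf z})\mapsto {\bf A}(\omega){\bf z} \in \R^N$. Alternatively one can invoke that a scalar product on a separable Hilbert space is jointly Borel measurable on $\mathcal{X}\times\mathcal{X}$ (indeed jointly continuous), combined with the measurability of $\omega\mapsto{\bf A}_i(\omega)$.

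From there the composition is routine: $(\omega,{\bf z}) \mapsto ({\bf A}(\omega){\bf z},\bm{\zeta}(\omega),{\bf W}(\omega)) \in \R^N\times\R^N\times\M_{m,N}$ is $\Fc\otimes\mathcal{B}(\mathcal{X})$-measurable (each component is, and a finite product of measurable maps is measurable into the product $\sigma$-algebra, which here coincides with the Borel $\sigma$-algebra of the finite-dimensional target), and then applying the continuous map $({\bf a},{\bf b},{\bf W}) \mapsto {\bf W}\bm{\sigma}({\bf a}+{\bf b})$ preserves measurability. Hence $(\omega,{\bf z}) \mapsto H^{{\bf A}(\omega),\bm{\zeta}(\omega)}_{{\bf W}(\omega)}({\bf z})$ is $\Fc\otimes\mathcal{B}(\mathcal{X})$-measurable. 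The main (and essentially only) obstacle is the one already identified — establishing joint measurability of the pairing $(\omega,{\bf z})\mapsto\langle {\bf A}_i(\omega),{\bf z}\rangle$ when $\mathcal{X}$ is infinite-dimensional — and separability of $\mathcal{X}$ is exactly what makes the countable-basis expansion (and hence the reduction to a countable sum of products of one-variable measurable functions) go through; in the case $\mathcal{X}=\R^d$ this is immediate and the whole lemma is trivial.
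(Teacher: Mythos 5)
Your proof is correct, but it takes a genuinely different route from the paper. The paper verifies that $H^{{\bf A}(\omega),\bm{\zeta}(\omega)}_{{\bf W}(\omega)}({\bf z})$ is a Carath\'eodory function --- $\Fc$-measurable in $\omega$ for each fixed ${\bf z}$ (since ${\bf v}\mapsto\langle{\bf v},{\bf z}\rangle$ is continuous by Cauchy--Schwarz) and continuous in ${\bf z}$ for each fixed $\omega$ --- and then invokes the standard result that Carath\'eodory functions on a separable metrizable space are jointly measurable (Aliprantis--Border, Lemma~4.51). You instead prove joint measurability of the pairing $(\omega,{\bf z})\mapsto\langle{\bf A}_i(\omega),{\bf z}\rangle$ directly, by expanding in a countable orthonormal basis and writing it as a pointwise limit of finite sums of products of one-variable measurable functions, after which everything else is composition with continuous maps. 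Both arguments use separability of $\mathcal{X}$ in an essential way, just packaged differently: the paper hides it inside the Carath\'eodory lemma, you make it explicit through the countable basis. Your version is more self-contained (no external lemma needed) and in fact slightly more robust, since it only needs $\sigma$ to be Borel measurable rather than continuous, whereas the paper's argument relies on continuity of ${\bf z}\mapsto H^{{\bf A}(\omega),\bm{\zeta}(\omega)}_{{\bf W}(\omega)}({\bf z})$; the paper's version is shorter once the cited lemma is granted. Both are complete proofs.
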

\begin{proof}
On the one hand, the Cauchy-Schwarz inequality implies that for any ${\bf z} \in \mathcal{X}$ the mapping $\mathcal{X} \ni {\bf v} \mapsto \langle {\bf v},{\bf z} \rangle$ is continuous and thus $\mathcal{B}(\mathcal{X})$-measurable. This shows that $ \langle {\bf A}_i,{\bf z} \rangle$ is a random variable for all $i=1,\ldots,N$. Therefore, for any  ${\bf z} \in \mathcal{X}$ the mapping
\[ \Omega \ni \omega \mapsto H^{{\bf A}(\omega),\bm{\zeta}(\omega)}_{\bf W(\omega)}({\bf z}) = {\bf W}(\omega)\bm{\sigma}({\bf A}(\omega) {\bf z} + \bm{\zeta}(\omega)) \in \R^m  \] 
is $\Fc$-measurable.

On the other hand, for any $\omega \in \Omega$ the linear map ${\bf A}(\omega) \colon \mathcal{X} \to \R^N$ is continuous (again by the Cauchy-Schwarz inequality) and thus also $H^{{\bf A}(\omega),\bm{\zeta}(\omega)}_{\bf W(\omega)} \colon \mathcal{X} \to \R^m$ is continuous. The claimed product-measurability therefore follows for instance from Aliprantis \& Border~\cite[Lemma~4.51]{Aliprantis2006}.\qed
\end{proof}

We now present our random neural network approximation result, {see also Remark~\ref{rmk:leastsquares} below for a discussion}. We use the following notation: for any measure $\nu$ we write $\nu^-$ for the measure $\nu^-(\cdot)=\nu(-\cdot)$ {and for a complex measure $\nu$ we denote by $|\nu|$ its total variation measure, see \cite[Chapter~6]{Rudin:real:analysis}.}

\begin{theorem}
\label{thm:approxstatic} 
Suppose that
 $H^* \colon \mathcal{X} \to \R^m$ can be represented as
 \[ H_j^*({\bf z}) = \int_{\mathcal{X}} e^{i \langle {\bf w}, {\bf z} \rangle } \hat{\mu}_j(\d {\bf w})  \]
 for some complex measures $\hat{\mu}_j$, $j=1,\ldots,m$, on $(\mathcal{X},\mathcal{B}(\mathcal{X}))$ and all ${\bf z} \in \mathcal{X}$ with $\|{\bf z}\| \leq M$. Assume that 
 \begin{equation} \label{eq:BarronCondHilbert} \int_{\mathcal{X}} \max(1,\|{\bf w}\|^2) |\hat{\mu}_j|(\d {\bf w}) < \infty, \end{equation}
$\pi = \pi_{\mathcal{X}} \otimes (\pi_\R(x)\d x)$, $ |\hat{\mu}_j|+|\hat{\mu}_j|^- \ll  \pi_{\mathcal{X}}$ and with $F_\pi(x) = 2\int_{-x}^0 \frac{1}{\pi_\R(u)} \d u$ either (i) or (ii) holds:
\begin{itemize}
\item [(i)] $\pi_\R$ is strictly positive and $F_\pi(x) < \infty$ for all $x \in \R$
\item [(ii)] for some $R>0$, $\pi_{\mathcal{X}}(\{ {\bf w} \in \mathcal{X} \, : \, \|{\bf w}\| > R \})=0$ and $\pi_\R(x) > 0$, $F_\pi(x)<\infty$  for $|x|\leq \max(M R,1)$.
\end{itemize}
Furthermore set $g_j = \frac{\d (|\hat{\mu}_j|+|\hat{\mu}_j|^-)}{\d \pi_{\mathcal{X}}}$ and assume that 
  \begin{equation} \label{eq:VarianceHilbert} \int_{\mathcal{X}} F_{\pi}(M \|{\bf w}\|)\|{\bf w}\|^2 g_j({\bf w})^2 \pi_\mathcal{X}(\d{\bf w}) < \infty, \quad \int_{\mathcal{X}} \max(\|{\bf w}\|^2,1) g_j({\bf w})^2 \pi_\mathcal{X}(\d{\bf w}) < \infty \end{equation}
and let $\sigma \colon \R \to \R$ be the rectifier function given by $\sigma(x):=\max(x,0)$.
 Then there exists ${\bf W}$ (a  $\M_{m,N}$-valued random variable) and $C^*>0$ such that the random ReLU-neural network $H_{\bf W}^{{\bf A},\bm{\zeta}}$ satisfies
 \[\begin{aligned}  \E[\|H_{\bf W}^{{\bf A},\bm{\zeta}}({\bf Z}) - H^*({\bf Z})\|^2]   \leq \frac{C^*}{N} \end{aligned} \]
 and 
  for any $\delta \in (0,1)$, with probability $1-\delta$ the random neural network $H_{\bf W}^{{\bf A},\bm{\zeta}}$ satisfies
\[
\left(\int_{\mathcal{X}} \|H_{\bf W}^{{\bf A},\bm{\zeta}}({\bf z}) - H^*({\bf z})\|^2 \mu_{{\bf Z}}(\d {\bf z}) \right)^{1/2} \leq \frac{\sqrt{C^*}}{\delta \sqrt{N}}.
\]
Moreover, the constant $C^*$ is explicit and given by $C^*=\sum_{j=1}^m C_j^*$ with 
\begin{equation*}\begin{aligned} C_j^* & = M^2 \int_{\mathcal{X}} F_{\pi}(M \|{\bf w}\|)\|{\bf w}\|^2 g_j({\bf w})^2 \pi_\mathcal{X}(\d{\bf w}) \\ & \quad  + 8 M^2 (F_{\pi}(1)-F_{\pi}(-1)) \int_{\mathcal{X}}  \max(\|{\bf w}\|^2,1) g_j({\bf w})^2 \pi_\mathcal{X}(\d{\bf w}). \end{aligned} \end{equation*}

\end{theorem}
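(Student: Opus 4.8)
The plan is to reduce the multi-output problem to $m$ scalar problems and, for each component, produce a ReLU random neural network whose expected squared $L^2$-error is $O(1/N)$ by realizing the trained output weights as an importance-sampling estimator. First I would fix $j$ and apply Proposition~\ref{prop:SmoothRepresentation} (in its abstract Hilbert-space form, replacing $B_R\subset\R^q$ by the corresponding set in $\mathcal X$ and $g$ by the density $g_j$ relative to $\pi_{\mathcal X}$) to obtain a signed density $\pi_j$ on $\mathcal X\times\R$, supported (in case (ii)) on $\{\|{\bf w}\|\le R\}\times\{|u|\le\max(MR,1)\}$, such that $H^*_j({\bf z})=\int \pi_j(\bm\omega)\,\sigma(\langle({\bf z},1),\bm\omega\rangle)\,\d\bm\omega$ for $\|{\bf z}\|\le M$, together with the second-moment bound from part~(iv). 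The key observation is that since the hidden weights $({\bf A}_i,\zeta_i)$ are drawn i.i.d.\ from $\pi=\pi_{\mathcal X}\otimes(\pi_\R(x)\,\d x)$, one can write $H^*_j=\E[\,r_j({\bf A}_1,\zeta_1)\,\sigma(\langle({\bf Z},1),({\bf A}_1,\zeta_1)\rangle)\mid{\bf Z}\,]$ where $r_j=\pi_j/(\text{density of }\pi)$ is the likelihood ratio; the $i$-th output weight is then defined to be $W_{ji}=\tfrac1N r_j({\bf A}_i,\zeta_i)$, so that $H^{{\bf A},\bm\zeta}_{\bf W}({\bf z})_j=\tfrac1N\sum_{i=1}^N r_j({\bf A}_i,\zeta_i)\sigma(\langle({\bf z},1),({\bf A}_i,\zeta_i)\rangle)$ is an unbiased Monte Carlo estimator of $H^*_j({\bf z})$ on $B_M$.

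Next I would compute the conditional variance. By independence of ${\bf Z}$ and the $({\bf A}_i,\zeta_i)$, and since the summands are i.i.d.\ given ${\bf Z}$, we get
\[
\E\!\left[\,\bigl|H^{{\bf A},\bm\zeta}_{\bf W}({\bf Z})_j-H^*_j({\bf Z})\bigr|^2\,\right]\le\frac1N\,\E\!\left[\,r_j({\bf A}_1,\zeta_1)^2\,\sigma(\langle({\bf Z},1),({\bf A}_1,\zeta_1)\rangle)^2\,\right].
\]
Using $\|{\bf Z}\|\le M$ a.s.\ and $\sigma(x)^2\le (\|{\bf Z}\|\|{\bf A}_1\|+|\zeta_1|)^2\le 2M^2\|{\bf A}_1\|^2+2\zeta_1^2$ (and bounding $1$ by $M^2$ where convenient when $\|{\bf A}_1\|$ is small), the expectation becomes $\int r_j(\bm\omega)^2\bigl(2M^2\|{\bf w}\|^2+2u^2\bigr)\,(\text{density of }\pi)(\bm\omega)\,\d\bm\omega=\int \pi_j(\bm\omega)^2 r_j(\bm\omega)^{-1}\cdot 2(M^2\|{\bf w}\|^2+u^2)\,\d\bm\omega$ — wait, more carefully, $r_j^2\cdot(\text{density})=\pi_j^2/(\text{density})$, and on the support of $\pi_j$ the $u$-marginal density of $\pi$ is $\pi_\R(u)$; here the factor $F_\pi$ enters, since bounding $\int_{|u|\le M\|{\bf w}\|}\pi_j(\bm\omega)^2\pi_\R(u)^{-1}\,\d u$ is exactly what produces the $F_\pi(M\|{\bf w}\|)$ weight, while the $\mathbbm 1_{[-1,1]}$ part of $\alpha_2$ produces the $(F_\pi(1)-F_\pi(-1))$ term. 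After reorganizing (splitting $\pi_j$ into its $\alpha_1$ and $\alpha_2$ pieces as in the proof of Proposition~\ref{prop:SmoothRepresentation}, applying Cauchy--Schwarz in $u$ to pass from $|\pi_j|$ back to $|g_j|^2$, and invoking the second-moment bound (iv)) this yields $\E[\,|H^{{\bf A},\bm\zeta}_{\bf W}({\bf Z})_j-H^*_j({\bf Z})|^2\,]\le C_j^*/N$ with $C_j^*$ as stated; summing over $j$ gives the first displayed bound with $C^*=\sum_j C_j^*$. The high-probability statement then follows immediately from Markov's inequality applied to the nonnegative random variable $\int_{\mathcal X}\|H^{{\bf A},\bm\zeta}_{\bf W}({\bf z})-H^*({\bf z})\|^2\mu_{\bf Z}(\d{\bf z})$, whose expectation (over ${\bf A},\bm\zeta$) is $\le C^*/N$, so with probability $1-\delta$ it is $\le C^*/(N\delta^2)$, and taking square roots gives $\sqrt{C^*}/(\delta\sqrt N)$.

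The main obstacle is bookkeeping rather than conceptual: one must carefully transfer the finite-dimensional integral representation of Proposition~\ref{prop:SmoothRepresentation} to the abstract separable-Hilbert setting (this is where the hypotheses $|\hat\mu_j|+|\hat\mu_j|^-\ll\pi_{\mathcal X}$ and the two alternatives (i)/(ii) on $\pi_\R$ do their work — (i) corresponds to $\pi_j$ possibly having unbounded ${\bf w}$-support, (ii) to the change-of-variables–truncated version), and then track precisely which parts of $\pi_j$ are supported on $\{|u|\le M\|{\bf w}\|\}$ versus $\{|u|\le 1\}$ so that the two integrals in $C_j^*$ come out with the correct $F_\pi$-weights and constants. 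One must also verify measurability of the constructed ${\bf W}$ (it is a measurable function of $({\bf A}_i,\zeta_i)$ since $r_j$ is measurable) and check that the integrability conditions \eqref{eq:VarianceHilbert} are exactly what guarantee $C_j^*<\infty$, closing the argument.
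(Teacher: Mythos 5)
Your proposal follows essentially the same route as the paper's proof: a componentwise reduction to $m=1$, the Step-1 integral representation $H_j^*({\bf v})=\int\sigma(\langle{\bf v},{\bf w}\rangle+u)\,\alpha_j(\d{\bf w},\d u)$ with $\alpha_j=\alpha_1+\alpha_2$ absolutely continuous with respect to $\pi$, output weights defined as the likelihood ratios $\tfrac1N\tfrac{\d\alpha_j}{\d\pi}({\bf A}_i,\zeta_i)$, a conditional-variance bound in which the $F_\pi(M\|{\bf w}\|)$ and $F_\pi(1)-F_\pi(-1)$ weights arise exactly as you describe, and Markov's inequality. The one caveat is that you should invoke only Step~1 of Proposition~\ref{prop:SmoothRepresentation} (the paper redoes it with the polar decomposition $\hat{\mu}_j=h_j|\hat{\mu}_j|$ and measures in place of Lebesgue densities, and does not use part~(iv)), since Step~2's inversion change of variables has no analogue on an infinite-dimensional $\mathcal{X}$ --- but the truncation in case~(ii) comes for free from the support assumption on $\pi_{\mathcal{X}}$, so nothing is lost.
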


{
\begin{remark}\label{rmk:leastsquares} At first glance Theorem~\ref{thm:approxstatic} may appear to be merely an existence statement. However, an optimal ${\bf W}$ can in fact be computed explicitly by solving the least-squares minimization problem 
\begin{equation}\label{eq:leastSquares} \min_{{\bf W}} \E[\|{\bf W}  \bm{\sigma}({\bf A} {\bf Z} + \bm{\zeta}) - H^*({\bf Z})\|^2 | {\bf A}, \bm{\zeta}], \end{equation}
where the minimization is taken with respect to $\M_{m,N}$-valued random variables which are measurable with respect to the sigma-algebra generated by  ${\bf A}, \bm{\zeta}$. We will show in \eqref{eq:Videf} and \eqref{eq:WFNNdef} below  that the matrix ${\bf W}$ constructed in the proof of Theorem~\ref{thm:approxstatic} is measurable with respect to the sigma-algebra generated by  ${\bf A}, \bm{\zeta}$. Consequently, Theorem~\ref{thm:approxstatic} shows that 
\[ \E\left[\min_{{\bf W}} \left\lbrace\E[\|H_{\bf W}^{{\bf A},\bm{\zeta}}({\bf Z}) - H^*({\bf Z})\|^2 | {\bf A}, \bm{\zeta}]\right\rbrace\right]  \leq \frac{C^*}{N}. \]
\end{remark}
}
{
\begin{remark} A first attempt at proving Theorem~\ref{thm:approxstatic} might be to directly work with the solution to the least-squares minimization problem  \eqref{eq:leastSquares}, i.e.\ the explicit minimizer ${\bf W}^*$.  However, evaluating the approximation error
\[
\E[\|{\bf W}^*  \bm{\sigma}({\bf A} {\bf Z} + \bm{\zeta}) - H^*({\bf Z})\|^2]
\]
directly is very challenging due to the dependence between ${\bf W}^*$ and $\bm{\sigma}({\bf A} {\bf Z} + \bm{\zeta})$. This is further complicated by the fact that the explicit expression of ${\bf W}^*$ involves the inverse of the covariance matrix of $\bm{\sigma}({\bf A} {\bf Z} + \bm{\zeta})$ conditional on ${\bf A}, \bm{\zeta} $. Therefore, evaluating the expectation  with respect to ${\bf A}, \bm{\zeta} $ or providing an upper bound for it is for the time being out of reach. 
This is the reason why we do not work with \eqref{eq:leastSquares} in the proof of Theorem~\ref{thm:approxstatic}, but we rather explicitly construct a  ${\bf W}$ for which the approximation error can be bounded more easily. As pointed out in Remark~\ref{rmk:leastsquares}, we thereby obtain also an upper bound for the optimal ${\bf W}$. Whether or not one can also obtain a lower bound 
\[ \E\left[\min_{{\bf W}} \left\lbrace\E[\|H_{\bf W}^{{\bf A},\bm{\zeta}}({\bf Z}) - H^*({\bf Z})\|^2 | {\bf A}, \bm{\zeta}]\right\rbrace\right]  \geq \frac{\tilde{C}}{N} \]
for some $\tilde{C}>0$ is still not clear due to the difficulties mentioned above. 
\end{remark}
}

\begin{proof}
First note that, writing ${\bf W}_j$ for the $j$-th row of ${\bf W}$, one has
\[ \E[\|H_{\bf W}^{{\bf A},\bm{\zeta}}({\bf Z}) - H^*({\bf Z})\|^2] = \sum_{j=1}^m \E[|H_{{\bf W}_j}^{{\bf A},\bm{\zeta}}({\bf Z}) - H_j^*({\bf Z})|^2]. \]
Thus, it is sufficient to prove the claimed result for each component $j$ individually and sum up the resulting constants. Without loss of generality, we may therefore assume $m=1$. To simplify notation we will write $H^*=H_1^*$, $\hat{\mu}=\hat{\mu}_1$, $g=g_1$ and $C^* = C_1^*$.

The proof now proceeds in two steps. In a first step we derive an integral representation for $H^*$ similar to Proposition~\ref{prop:SmoothRepresentation}. In the second step we then choose ${\bf W}$ in such a way that $H_{\bf W}^{{\bf A},\bm{\zeta}}$ is a sample average of $N$ i.i.d.\ random functions with expectation $H^*$ and deduce the claimed error bound based on this.

\textit{Step 1: Integral representation.} Firstly, recall that by \cite[Theorem~6.12]{Rudin:real:analysis} there exists a measurable function $h \colon \mathcal{X} \to \C$ satisfying $|h({\bf w})|=1$ for all ${\bf w} \in \mathcal{X}$ and $\hat{\mu}(\d {\bf w}) = h({\bf w})|\hat{\mu}|(\d {\bf w}) $. 
Next note that proceeding precisely as in the proof of Step~1 in Proposition~\ref{prop:SmoothRepresentation} and using \eqref{eq:BarronCondHilbert} yields for any ${\bf v} \in \mathcal{X}$ that
\begin{equation}\label{eq:auxEq68} \begin{aligned} - \int_{\mathcal{X} \times [0,\infty)} &  [(\langle{\bf v},{\bf w}\rangle-u)^+ e^{i u} + (-\langle {\bf v},{\bf w}\rangle-u)^+ e^{-iu}] \hat{\mu}(\d {\bf w}) \d u \\ &  = \int_{\mathcal{X}} (e^{i \langle {\bf v},{\bf w}\rangle} - i \langle{\bf v},{\bf w}\rangle - 1) \hat{\mu}(\d {\bf w}) = H^*({\bf v}) - \int_{\mathcal{X}} i \langle{\bf v},{\bf w}\rangle  \hat{\mu}(\d {\bf w}) - H^*(0).  \end{aligned} \end{equation}
We claim that the last integral is a real number. To see this, one uses $\mathrm{Im}(H^*(\lambda{ \bf v}))=0$ and $\mathrm{Im}(H^*(0))=0$ to estimate for any $\lambda > 0$ 

\[ \begin{aligned}
\left|\mathrm{Im}\left(\int_{\mathcal{X}} i \langle{\bf v},{\bf w}\rangle  \hat{\mu}(\d {\bf w})\right)\right|& = \frac{1}{\lambda} \left|\mathrm{Im}\left(H^*(\lambda{ \bf v}) - H^*(0) - \int_{\mathcal{X}} i \langle \lambda {\bf v},{\bf w}\rangle  \hat{\mu}(\d {\bf w})\right)\right|
\\ & \leq \frac{1}{\lambda} \left|\int_{\mathcal{X}} (e^{i \langle \lambda {\bf v}, {\bf w}  \rangle } - 1 - i \langle \lambda {\bf v},{\bf w}\rangle) h({\bf w})  |\hat{\mu}|(\d {\bf w})\right|
\\ & \leq \frac{1}{2 \lambda} \int_{\mathcal{X}} |\langle \lambda {\bf v},{\bf w}\rangle|^2  |\hat{\mu}|(\d {\bf w})
\\ & \leq \lambda \frac{\|{\bf v}\|^2}{2} \int_{\mathcal{X}} \|{\bf w}\|^2  |\hat{\mu}|(\d {\bf w})
\end{aligned}
\]
and note that the last expression converges to $0$ as $\lambda \to 0$ due to   \eqref{eq:BarronCondHilbert}. This shows that 

\begin{equation}\label{eq:auxEq78}\begin{aligned} \int_{\mathcal{X}} & i \langle{\bf v},{\bf w}\rangle  \hat{\mu}(\d {\bf w})  + H^*(0) \\ &  =\int_{\mathcal{X}} \left( \langle{\bf v},{\bf w}\rangle (-\mathrm{Im}[h({\bf w})]) + \mathrm{Re}[h({\bf w})] \right) |\hat{\mu}|(\d {\bf w})  \\
&  = \int_{\mathcal{X}} \int_0^1 (\langle{\bf v},{\bf w}\rangle+u) (\mathrm{Re}[h({\bf w})]-\mathrm{Im}[h({\bf w})]) \d u  |\hat{\mu}|(\d {\bf w}) 
\\ & = \int_{\mathcal{X}} \int_0^1 [(\langle{\bf v},{\bf w}\rangle+u)^+ - (-\langle{\bf v},{\bf w}\rangle-u)^+] (\mathrm{Re}[h({\bf w})]-\mathrm{Im}[h({\bf w})]) \d u |\hat{\mu}|(\d {\bf w}), \end{aligned} \end{equation}
which is the analogue to \eqref{eq:auxEq62} in the proof of Proposition~\ref{prop:SmoothRepresentation}.

We now combine the representations \eqref{eq:auxEq68} and \eqref{eq:auxEq78} to arrive at the claimed integral representation. To this end define the function $\bar{h} \colon \mathcal{X} \to \R$ by $\bar{h}({\bf w}) = \mathrm{Re}[h({\bf w})]-\mathrm{Im}[h({\bf w})]$ for ${\bf w} \in \mathcal{X}$ and define the measures $\tilde{\mu}_1(\d {\bf w}, \d u)  = \mathrm{Re}[e^{-i u} h({\bf w})] |\hat{\mu}|(\d {\bf w}) \d u$, $\tilde{\mu}_2(\d {\bf w}, \d u)  = \mathrm{Re}[e^{i u} h(-{\bf w})] |\hat{\mu}|^-(\d {\bf w}) \d u$ on $\mathcal{X} \times \R$. With these notations we may define the measures $\alpha_1$ and $\alpha_2$ on $\mathcal{X} \times \R$ by
\[\begin{aligned} \alpha_1(\d {\bf w}, \d u) & = -  \mathbbm{1}_{(-M \|{\bf w}\|,0]}(u) [\tilde{\mu}_1(\d {\bf w}, \d u)+\tilde{\mu}_2(\d {\bf w}, \d u)]  \\ \alpha_2(\d {\bf w}, \d u) & = \mathbbm{1}_{[0,1]}(u)\bar{h}({\bf w})|\hat{\mu}|(\d {\bf w}) \d u - \mathbbm{1}_{[-1,0]}(u)\bar{h}(-{\bf w})|\hat{\mu}|^-(\d {\bf w}) \d u .\end{aligned}\]
As shown above, the right hand side in \eqref{eq:auxEq68} is real and hence so is the left hand side. Thus, by setting $\alpha = \alpha_1 + \alpha_2$, rearranging \eqref{eq:auxEq68} and using \eqref{eq:auxEq78} one obtains for any ${\bf v} \in \mathcal{X}$ with $\|{\bf v}\| \leq M$ that
\begin{equation}\label{eq:auxEq71} H^*({\bf v}) = \int_{\mathcal{X} \times \R}   \sigma(\langle{\bf v},{\bf w}\rangle+u) \alpha(\d {\bf w}, \d u). 
\end{equation}
Finally,  let $A \in \mathcal{B}(\mathcal{X} \times \R)$ satisfy $\pi(A)=0$ and for $u \in \R$ denote $A_u =\{ {\bf w} \in \mathcal{X} : ({\bf w},u) \in A \text{ for some } u \in \R \} $. If (i) holds, then the assumptions that  $\pi = \pi_{\mathcal{X}} \otimes (\pi_\R(x)\d x)$ and $\pi_\R>0$ imply that $\pi_{\mathcal{X}}(A_u) = 0$ for Lebesgue-a.e.\ $u \in \R$. Consequently,  $ |\hat{\mu}|(A_u)+|\hat{\mu}|^-(A_u) = 0$ and $\alpha(A)=0$. In case (ii) one may proceed similarly to obtain in either case that $\alpha \ll \pi$. Writing
\[ g({\bf w}) = \frac{\d ( |\hat{\mu}|+|\hat{\mu}|^-)}{\d \pi_{\mathcal{X}}}({\bf w}), \quad {\bf w} \in \mathcal{X} \]
one uses $|\bar{h}({\bf w})|\leq \sqrt{2}$ to estimate for any $({\bf w},u) \in \mathcal{X} \times \R$ that
\begin{equation}\begin{aligned}\label{eq:auxEq73} \left|\frac{\d \alpha}{\d \pi }({\bf w},u)\right| \leq (\mathbbm{1}_{(-M \|{\bf w}\|,0]}(u) +\sqrt{2}\mathbbm{1}_{[-1,1]}(u)) \frac{1}{\pi_\R(u)}g({\bf w}).
\end{aligned} \end{equation}

\textit{Step 2: Importance sampling.}

Next, write ${\bf U}_i=({\bf A}_i,
\zeta_i)$,  define the random variables
\begin{equation}\label{eq:Videf} V_i = \frac{\d \alpha}{\d \pi }({\bf U}_i) \end{equation}
and set
\begin{equation} \label{eq:WFNNdef}
{\bf W} = \frac{1}{N} \begin{pmatrix}
  V_1 &
   \cdots    &
   V_N
 \end{pmatrix}.
\end{equation}
By first inserting the definitions and then using independence, conditioning (see for instance \cite[Lemma~2.11]{Kallenberg2002}) and the assumption that $\mathcal{X}$ is separable we obtain 
\begin{equation}\label{eq:auxEq72}\begin{aligned} \E[\|H_{\bf W}^{{\bf A},\bm{\zeta}}({\bf Z}) - H^*({\bf Z})\|^2] & = \E[|{\bf W} \bm{\sigma}({\bf A} {\bf Z} + \bm{\zeta}) - H^*({\bf Z})|^2] 
\\ &  = \E\left[\left.\E\left[\left|\frac{1}{N}\sum_{i=1}^N V_i \sigma(\langle{\bf A}_i ,{\bf z}\rangle + \zeta_i) - H^*({\bf z})\right|^2\right]\right|_{{\bf z}={\bf Z}}\right]. \end{aligned} \end{equation}
However, by construction each of the summands $ V_i \sigma(\langle{\bf A}_i ,{\bf z}\rangle + \zeta_i)$ in \eqref{eq:auxEq72} is a random variable with expectation $H^*({\bf z})$, as one sees by using the representation \eqref{eq:auxEq71} to calculate for each $i=1,\ldots,N$ and any ${\bf z} \in \mathcal{X}$ with $\|{\bf z}\| \leq M$
\begin{equation*}\begin{aligned} \E[V_i \sigma(\langle{\bf A}_i ,{\bf z}\rangle + \zeta_i) ] & 
= \int_{\mathcal{X} \times \R} \frac{\d \alpha}{\d \pi}({\bf w},u) \sigma(\langle {\bf w} ,{\bf z}\rangle + u)  \pi(\d{\bf w},\d u)  
\\
& = H^*({\bf z}).\end{aligned} \end{equation*}
Using independence one thus obtains
\begin{equation} \label{eq:auxEq74} \begin{aligned}
\E\left[\left|\frac{1}{N}\sum_{i=1}^N V_i \sigma(\langle{\bf A}_i ,{\bf z}\rangle + \zeta_i) - H^*({\bf z})\right|^2\right] & = \mathrm{Var}\left(\frac{1}{N}\sum_{i=1}^N V_i \sigma(\langle{\bf A}_i ,{\bf z}\rangle + \zeta_i)\right)
\\ & = \frac{1}{N} \mathrm{Var}\left(V_1 \sigma(\langle{\bf A}_1 ,{\bf z}\rangle + \zeta_1)\right)
\\ & \leq \frac{1}{N} \E\left[V_1^2 \sigma(\langle{\bf A}_1 ,{\bf z}\rangle + \zeta_1)^2\right].
\end{aligned}\end{equation}
To estimate the last expectation, one notes that \eqref{eq:auxEq73} and \eqref{eq:VarianceHilbert} yield for any ${\bf z} \in \mathcal{X}$ with $\|{\bf z}\| \leq M$
\begin{equation}\begin{aligned} \label{eq:Cstar} \E& \left[ V_1^2 \sigma(\langle{\bf A}_1 ,{\bf z}\rangle + \zeta_1)^2\right]  \\ &  = \int_{\mathcal{X} \times \R} \left(\frac{\d \alpha}{\d {\pi}}({\bf w},u)\right)^2 \sigma(\langle {\bf w} ,{\bf z}\rangle + u)^2  \pi(\d{\bf w},\d u)
\\ & \leq 2 \int_{\mathcal{X} \times \R} (\mathbbm{1}_{(-M \|{\bf w}\|,0]}(u) +2\mathbbm{1}_{[-1,1]}(u)) \left(\frac{g({\bf w})}{\pi_\R(u)}\right)^2 \sigma(\langle {\bf w} ,{\bf z}\rangle + u)^2  \pi(\d{\bf w},\d u)
\\ & \leq 2 \int_{\mathcal{X}} \int_\R \left[ \mathbbm{1}_{(-M \|{\bf w}\|,0]}(u)|\langle {\bf w} ,{\bf z}\rangle|^2 +4\mathbbm{1}_{[-1,1]}(u)(|\langle {\bf w} ,{\bf z}\rangle|^2+1) \right] \frac{g({\bf w})^2}{\pi_\R(u)}  \pi_\mathcal{X}(\d{\bf w}) \d u
\\ & \leq M^2 \int_{\mathcal{X}} F_{\pi}(M \|{\bf w}\|)\|{\bf w}\|^2 g({\bf w})^2 \pi_\mathcal{X}(\d{\bf w}) \\ & \quad \quad + 8 M^2 (F_{\pi}(1)-F_{\pi}(-1)) \int_{\mathcal{X}}  \max(\|{\bf w}\|^2,1) g({\bf w})^2 \pi_\mathcal{X}(\d{\bf w})
\\ & = C^* < \infty.
\end{aligned}
\end{equation}
Combining \eqref{eq:auxEq72}, \eqref{eq:auxEq74} and \eqref{eq:Cstar} thus yields
\[
\E \left[\left(\int_{\mathcal{X}} \|H_{\bf W}^{{\bf A},\bm{\zeta}}({\bf z}) - H^*({\bf z})\|^2 \mu_{{\bf Z}}(\d {\bf z})\right)^{1/2}\right] \leq \E[\|H_{\bf W}^{{\bf A},\bm{\zeta}}({\bf Z}) - H^*({\bf Z})\|^2]^{1/2} \leq \frac{\sqrt{C^*}}{\sqrt{N}}.
\]
Thus, for any given $\delta \in (0,1)$ one may set $\eta = \frac{\sqrt{C^*}}{\delta \sqrt{N}}$ and apply Markov's inequality to obtain
\[ 
\P\left( \left(\int_{\mathcal{X}} \|H_{\bf W}^{{\bf A},\bm{\zeta}}({\bf z}) - H^*({\bf z})\|^2 \mu_{{\bf Z}}(\d {\bf z}) \right)^{1/2} > \eta \right) \leq \frac{1}{\eta} \frac{\sqrt{C^*}}{\sqrt{N}} = \delta.
\]\qed
\end{proof}

\subsection{Results in the finite-dimensional case}
\label{sec:Rqcase}

Let us now specialize to the case $\mathcal{X} = \R^q$. We work in the setting and notation as introduced in Section~\ref{sec:hilbertcase} and, in particular, consider random neural networks
\begin{equation}\label{eq:rNNDef} H^{{\bf A},\bm{\zeta}}_{\bf W}({\bf z})= {\bf W}  \bm{\sigma}({\bf A} {\bf z} + \bm{\zeta}), \quad {\bf z} \in \R^q. \end{equation}
 Thus, Theorem~\ref{thm:approxstatic} provides a random neural network approximation result for a wide range of sampling distributions $\pi$ for the weights. However, these assumptions may not allow us to sample the weights from a uniform distribution, unless the Fourier representation of $H^*$ is compactly supported.  In this section we prove that this case can still be covered by applying the representation from Proposition~\ref{prop:SmoothRepresentation}. To simplify the statements we choose $m=1$ here, but all the results can be directly generalized to $m \in \N^+$. {In line with Remark~\ref{rmk:leastsquares} the ``existence'' statement in the next proposition also directly yields approximation error bounds for the random neural network with readout ${\bf W}$ trained by least-squares minimization.}

\begin{proposition}\label{prop:BarronFininteDim}
Suppose
 $H^* \colon \R^q \to \R$ can be represented as
 \[ H^*({\bf z}) = \int_{\R^q} e^{i \langle {\bf w}, {\bf z} \rangle } g({\bf w}) \d {\bf w} \]
 for some complex-valued function $g$ on $\R^q$ and all ${\bf z} \in \R^q$ with $\|{\bf z}\| \leq M$. Assume that 
 \begin{equation} \label{eq:BarronCondRd} \int_{\R^q} \max(1,\|{\bf w}\|^{2q+6}) |g({\bf w})|^2 \d {\bf w} < \infty. \end{equation}
 Let $R>0$, suppose the rows of {the $\M_{N,N}$-valued random matrix} ${\bf A}$ are {i.i.d. random variables with} uniform distribution on $B_R \subset \R^q$, {suppose the entries of the $\R^N$-valued random vector $\bm{\zeta}$ are i.i.d. random variables} uniformly distributed on $[-\max(M R,1),\max(M R,1)]$, {assume that ${\bf A}$ and $\bm{\zeta}$ are independent and let} $\sigma \colon \R \to \R$ be given as $\sigma(x)=\max(x,0)$.
 Then, there exists ${\bf W}$ (a  $\M_{1,N}$-valued random variable) and $C^*>0$ such that 
 \begin{equation}\begin{aligned} \label{eq:MSENN}  \E[\|H_{\bf W}^{{\bf A},\bm{\zeta}}({\bf Z}) - H^*({\bf Z})\|^2]   \leq \frac{C^*}{N} \end{aligned} \end{equation}
 and 
  for any $\delta \in (0,1)$, with probability $1-\delta$ the random neural network $H_{\bf W}^{{\bf A},\bm{\zeta}}$ satisfies
\[
\left(\int_{\R^q} \|H_{\bf W}^{{\bf A},\bm{\zeta}}({\bf z}) - H^*({\bf z})\|^2 \mu_{{\bf Z}}(\d {\bf z}) \right)^{1/2} \leq \frac{\sqrt{C^*}}{\delta \sqrt{N}}.
\]
Moreover, the constant $C^*$ is explicit (see \eqref{eq:CstarDef2} below).
\end{proposition}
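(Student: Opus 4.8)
The plan is to reproduce the importance-sampling argument from the proof of Theorem~\ref{thm:approxstatic}, but to feed it with the \emph{compactly supported} integral representation of Proposition~\ref{prop:SmoothRepresentation} in place of the Hilbert-space representation derived in Step~1 of that proof. This detour is forced on us: sampling the rows of ${\bf A}$ uniformly on $B_R$ and the $\zeta_i$ uniformly on $[-\max(MR,1),\max(MR,1)]$ amounts, in the notation of Theorem~\ref{thm:approxstatic}, to taking $\pi_{\mathcal X}$ equal to the normalized Lebesgue measure on $B_R$, and then the absolute-continuity hypothesis $|\hat\mu|+|\hat\mu|^-\ll\pi_{\mathcal X}$ fails as soon as the Fourier representation of $H^*$ is not supported inside $B_R$. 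By contrast, the stronger smoothness assumption \eqref{eq:BarronCondRd} lets us apply Proposition~\ref{prop:SmoothRepresentation} with $f=H^*$ to obtain, for our fixed $R>0$, a measurable kernel $\pi_0\colon\R^{q+1}\to\R$ that vanishes outside $\{({\bf w},u):\|{\bf w}\|\le R,\ |u|\le\max(MR,1)\}$, satisfies the bound in part~(iv), and represents $H^*({\bf v})=\int_{\R^{q+1}}\pi_0(\bm{\omega})\,\sigma(({\bf v},1)\cdot\bm{\omega})\,\d\bm{\omega}$ for every ${\bf v}$ with $\|{\bf v}\|\le M$.

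Next I would set up the estimator. Writing ${\bf U}_i=({\bf A}_i,\zeta_i)$, the $({\bf U}_i)_{i=1}^N$ are i.i.d.\ with common constant density $p\equiv(2\max(MR,1)\,\lambda_q(B_R))^{-1}$ on $B_R\times[-\max(MR,1),\max(MR,1)]$, whose support contains that of $\pi_0$. Defining $V_i=\pi_0({\bf U}_i)/p$ and ${\bf W}=\frac1N(V_1,\dots,V_N)$, Proposition~\ref{prop:SmoothRepresentation}(iii) gives, for every ${\bf z}$ with $\|{\bf z}\|\le M$,
\[ \E[V_i\,\sigma(\langle{\bf A}_i,{\bf z}\rangle+\zeta_i)]=\int_{\R^{q+1}}\pi_0(\bm{\omega})\,\sigma(({\bf z},1)\cdot\bm{\omega})\,\d\bm{\omega}=H^*({\bf z}). \]
Product-measurability of $H_{\bf W}^{{\bf A},\bm{\zeta}}$ follows from the Lemma in Section~\ref{sec:hilbertcase} (since $\pi_0$ is measurable), so $H_{\bf W}^{{\bf A},\bm{\zeta}}({\bf Z})$ is a random variable, and conditionally on ${\bf Z}$ the quantity $H_{\bf W}^{{\bf A},\bm{\zeta}}({\bf z})=\frac1N\sum_{i=1}^N V_i\,\sigma(\langle{\bf A}_i,{\bf z}\rangle+\zeta_i)$ is an average of $N$ i.i.d.\ unbiased estimators of $H^*({\bf z})$ (recall $\|{\bf Z}\|\le M$ a.s.). Conditioning on ${\bf Z}$ and using independence exactly as in \eqref{eq:auxEq72}--\eqref{eq:auxEq74},
\[ \E[\|H_{\bf W}^{{\bf A},\bm{\zeta}}({\bf Z})-H^*({\bf Z})\|^2]\le\frac1N\,\E[V_1^2\,\sigma(\langle{\bf A}_1,{\bf Z}\rangle+\zeta_1)^2]. \]

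It then remains to bound the last expectation uniformly over $\|{\bf z}\|\le M$. Rewriting it as $\int_{\R^{q+1}}(\pi_0/p)^2\,\sigma(({\bf z},1)\cdot\bm{\omega})^2\,p\,\d\bm{\omega}$, the factor $1/p=2\max(MR,1)\,\lambda_q(B_R)$ pulls out on the support of $\pi_0$, and Cauchy--Schwarz gives $\sigma(({\bf z},1)\cdot\bm{\omega})^2\le(\|{\bf z}\|^2+1)\|\bm{\omega}\|^2\le(M^2+1)\|\bm{\omega}\|^2$; hence, using Proposition~\ref{prop:SmoothRepresentation}(iv),
\begin{equation}\label{eq:CstarDef2}
\begin{aligned}
\E[V_1^2\,\sigma(\langle{\bf A}_1,{\bf z}\rangle+\zeta_1)^2] & \le 2\max(MR,1)\,\lambda_q(B_R)\,(M^2+1)\int_{\R^{q+1}}\|\bm{\omega}\|^2\pi_0(\bm{\omega})^2\,\d\bm{\omega} \\
 & \le 16\,\lambda_q(B_R)\max(MR,1)(M^2+1)(M^3+M+2) \\
 & \quad\times\left(\int_{B_R}\max(1,\|{\bf w}\|^3)|g({\bf w})|^2\,\d{\bf w}+\int_{\R^q\setminus B_R}\frac{\max(1,\|{\bf w}\|^{2q+5})}{R^{2q+2}}|g({\bf w})|^2\,\d{\bf w}\right)=:C^*,
\end{aligned}
\end{equation}
which is finite by \eqref{eq:BarronCondRd}; combined with the previous display this gives \eqref{eq:MSENN}. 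For the probabilistic statement, ${\bf Z}$ is independent of the weights and has law $\mu_{{\bf Z}}$, so $\E[\int_{\R^q}\|H_{\bf W}^{{\bf A},\bm{\zeta}}({\bf z})-H^*({\bf z})\|^2\mu_{{\bf Z}}(\d{\bf z})]=\E[\|H_{\bf W}^{{\bf A},\bm{\zeta}}({\bf Z})-H^*({\bf Z})\|^2]\le C^*/N$; by concavity of the square root the $L^2(\mu_{{\bf Z}})$-error has expectation at most $\sqrt{C^*/N}$, and Markov's inequality at threshold $\sqrt{C^*}/(\delta\sqrt N)$ yields the bound with probability $1-\delta$, exactly as at the end of the proof of Theorem~\ref{thm:approxstatic}. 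The change of variables and the Cauchy--Schwarz step are routine; the only genuine (and fairly mild) obstacle is recognizing that one must pass through Proposition~\ref{prop:SmoothRepresentation} rather than Theorem~\ref{thm:approxstatic}, because the uniform sampling law is singular with respect to $|\hat\mu|$ for a generic $H^*$ — and it is precisely the extra integrability in \eqref{eq:BarronCondRd} (stronger than \eqref{eq:BarronCondHilbert}) that makes the compactly supported representation, and hence this scheme, available.
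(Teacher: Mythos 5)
Your proof is correct and follows essentially the same route as the paper: apply Proposition~\ref{prop:SmoothRepresentation} to obtain the compactly supported kernel, identify the Radon--Nikodym derivative with respect to the uniform product law, run the importance-sampling variance bound from Step~2 of Theorem~\ref{thm:approxstatic}, and finish with Jensen and Markov. The only (immaterial) difference is that your Cauchy--Schwarz step yields the factor $M^2+1$ where the paper uses the slightly larger $(M+1)^2$ in \eqref{eq:CstarDef2}, so your constant is marginally sharper and the stated bound holds a fortiori.
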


\begin{proof} Firstly, the function $H^*$ satisfies the hypotheses of Proposition~\ref{prop:SmoothRepresentation}. Thus, there exists an integrable function $\pi^* \colon \R^{q+1} \to \R$ such that for ${\bf z} \in \R^q$ with $\|{\bf z}\| \leq M$ the function $H^*$ can be represented as
\[H^*({\bf z}) = \int_{\R^{q+1}} \sigma({\bf z} \cdot {\bf w} + u) \pi^*({\bf w},u) \d {\bf w} \d u \]
and $\pi^*({\bf w},u) = 0$ for all $({\bf w},u) \in \R^q \times \R$ satisfying $\|{\bf w}\|>R$ or $|u|>\max(M R,1)$. Moreover, 
\begin{equation}\begin{aligned} \label{eq:auxEq76} \int_{\R^{q+1}}  \|\bm{\omega}\|^2 \pi^*(\bm{\omega})^2 \d \bm{\omega}  & \leq 8 (M^3+M+2)
\left( \int_{B_R} \max(1,\|{\bf w}\|^3) |g({\bf w})|^2 \d {\bf w} \right. \\ & \qquad + \left. \int_{\R^{q} \setminus B_R} \frac{\max(1,\|{\bf w}\|^{2q+5})}{R^{2q+2}} |g({\bf w})|^2  \d {\bf w}\right).
\end{aligned}\end{equation}
Recall that by assumption $\pi = \pi_{\mathcal{X}} \otimes \pi_\R$, where $\pi_{\mathcal{X}}$ is the uniform distribution on $B_R$ and $\pi_\R$ is the uniform distribution on $[-\max(M R,1),\max(M R,1)]$. Hence, setting $\alpha = \pi^*(\bm{\omega})\d \bm{\omega}$ one has that   \eqref{eq:auxEq71} holds, $\alpha \ll \pi$ and $\frac{\d \alpha}{\d \pi} = 2\max(M R,1) \mathrm{Vol}_q(B_R) \pi^* $. Thus, one may now mimic Step~2 in the proof of Theorem~\ref{thm:approxstatic}, i.e. \eqref{eq:Videf}-\eqref{eq:auxEq74}, to obtain
\begin{equation}\label{eq:auxEq75}\begin{aligned} \E[\|H_{\bf W}^{{\bf A},\bm{\zeta}}({\bf Z}) - H^*({\bf Z})\|^2] & \leq \frac{1}{N} \E\left[V_1^2 \sigma(\langle{\bf A}_1 ,{\bf z}\rangle + \zeta_1)^2\right]. \end{aligned} \end{equation} 
Furthermore, for any ${\bf z} \in \mathcal{X}$ with $\|{\bf z}\| \leq M$
\begin{equation}\begin{aligned}\label{eq:auxEq77} \E& \left[ V_1^2 \sigma(\langle{\bf A}_1 ,{\bf z}\rangle + \zeta_1)^2\right]  \\ &  = \int_{\R^q \times \R} \left(\frac{\d \alpha}{\d {\pi}}({\bf w},u)\right)^2 \sigma(\langle {\bf w} ,{\bf z}\rangle + u)^2  \pi(\d{\bf w},\d u)
\\ & = 2\max(M R,1) \mathrm{Vol}_q(B_R) \int_{\R^q \times \R} (\pi^*({\bf w},u))^2 \sigma(\langle {\bf w} ,{\bf z}\rangle + u)^2  \d{\bf w}\d u
\\ & \leq 2\max(M R,1) \mathrm{Vol}_q(B_R) (M+1)^2 \int_{\R^{q+1}}\|\bm{\omega}\|^2 \pi^*(\bm{\omega})^2 \d \bm{\omega}.
\end{aligned}
\end{equation}
Combining \eqref{eq:auxEq76}, \eqref{eq:auxEq75} and \eqref{eq:auxEq77} thus yields \eqref{eq:MSENN}, as desired, with 
\begin{equation}\begin{aligned}\label{eq:CstarDef2}
C^* = & 16\max(M R,1) \mathrm{Vol}_q(B_R) (M+1)^2 (M^3+M+2)
\\ & \quad \cdot
\left( \int_{B_R} \max(1,\|{\bf w}\|^3) |g({\bf w})|^2 \d {\bf w} + \int_{\R^{q} \setminus B_R} \frac{\max(1,\|{\bf w}\|^{2q+5})}{R^{2q+2}} |g({\bf w})|^2  \d {\bf w}\right).
\end{aligned}
\end{equation}
The high-probability statement then follows from \eqref{eq:MSENN} precisely as in the proof of Theorem~\ref{thm:approxstatic}.\qed
\end{proof}
In the next result we present an alternative error estimate, for which the integrability condition on $g$ does not depend on the input dimension $q$ (compare \eqref{eq:BarronCondRd} to \eqref{eq:BarronCondRd2}). The estimate can be deduced from the error estimate in Proposition~\ref{prop:BarronFininteDim} by truncating $g$ and estimating the difference between the truncation and the original $H^*$. Recall that ${\bf Z}$ is a $\R^q$-valued random variable satisfying $\|{\bf Z}\|\leq M$, $\P$-a.s. {We emphasize that the ``existence'' statement in the following corollary also  yields approximation error bounds for the random neural network with readout ${\bf W}$ trained by least-squares minimization, see Remark~\ref{rmk:leastsquares}.}

\begin{corollary} \label{cor:RandomNNApprox}
Suppose
 $H^* \colon \R^q \to \R$ can be represented as
 \begin{equation}\label{eq:FourierRepresentation} H^*({\bf z}) = \int_{\R^q} e^{i \langle {\bf w}, {\bf z} \rangle } g({\bf w}) \d {\bf w} \end{equation}
 for some complex-valued function $g \in L^1(\R^q)$ and all ${\bf z} \in \R^q$ with $\|{\bf z}\| \leq M$. Assume that 
 \begin{equation} \label{eq:BarronCondRd2} C_g^* = \left(\int_{\R^q} \max(1,\|{\bf w}\|^{3}) |g({\bf w})|^2 \d {\bf w}\right)^{1/2} < \infty. \end{equation}
 Let $R>0$, suppose the rows of {the $\M_{N,N}$-valued random matrix} ${\bf A}$ are {i.i.d. random variables with} uniform distribution on $B_R \subset \R^q$, {suppose the entries of the $\R^N$-valued random vector $\bm{\zeta}$ are i.i.d. random variables} uniformly distributed on $[-\max(M R,1),\max(M R,1)]$, {assume that ${\bf A}$ and $\bm{\zeta}$ are independent and let} $\sigma \colon \R \to \R$ be given as $\sigma(x)=\max(x,0)$.
 Then there exists ${\bf W}$ (a  $\M_{1,N}$-valued random variable) such that 
 \begin{equation}\begin{aligned} \label{eq:MSENN2}  \E[\|H_{\bf W}^{{\bf A},\bm{\zeta}}({\bf Z}) - H^*({\bf Z})\|^2]^{1/2}   \leq \frac{\sqrt{C^*_R}}{\sqrt{N}} + \int_{\R^q \setminus B_R} |g({\bf w})| \d {\bf w}, \end{aligned} \end{equation}
 where 
 \begin{equation}\label{eq:CstardefCor}
 C^*_R = 16\max(M R,1) \mathrm{Vol}_q(B_R) (M+1)^2 (M^3+M+2)
 \int_{B_R} \max(1,\|{\bf w}\|^3) |g({\bf w})|^2 \d {\bf w}.
 \end{equation}
 
 In particular, writing $c_{M,q}^2 =  16\max(M,1) \mathrm{Vol}_q(B_1) (M+1)^2 (M^3+M+2)$, it follows that:
 \begin{itemize}
 \item[(i)] if $I_k = \int_{\R^q } \|{\bf w} \|^k |g({\bf w})| \d {\bf w} < \infty$ for some $k\in \N^+$, then $R=N^{\frac{1}{2k+q+1}}$ yields
 \begin{equation}\begin{aligned} \label{eq:MSENN2PolMoment}  \E[\|H_{\bf W}^{{\bf A},\bm{\zeta}}({\bf Z}) - H^*({\bf Z})\|^2]^{1/2}   \leq \frac{1}{N^{[2+\frac{(q+1)}{k}]^{-1}}} \left[c_{M,q} C_g^* + I_k\right],  \end{aligned} \end{equation}
 \item[(ii)] if $I_k = \int_{\R^q } \exp(C\|{\bf w} \|^k) |g({\bf w})| \d {\bf w} < \infty$ for some $k\in \N^+$ and $C>0$, then $R=(\frac{\log(\sqrt{N})}{C})^{1/k}$ yields
 \begin{equation}\begin{aligned} \label{eq:MSENN2ExpMoment}  \E[\|H_{\bf W}^{{\bf A},\bm{\zeta}}({\bf Z}) - H^*({\bf Z})\|^2]^{1/2}   \leq \frac{[\log(\sqrt{N})]^{(q+1)/(2k)}}{\sqrt{N}} \left[C^{-\frac{q+1}{2k}} c_{M,q} C_g^* + I_k\right]. \end{aligned} \end{equation}
 \end{itemize} 
\end{corollary}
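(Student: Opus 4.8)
The plan is to reduce the estimate to Proposition~\ref{prop:BarronFininteDim} by truncating the frequency variable. Since $H^*$ is real-valued we may first replace $g$ by its Hermitian symmetrization ${\bf w} \mapsto \frac{1}{2}\bigl(g({\bf w}) + \overline{g(-{\bf w})}\bigr)$; this leaves $H^*$ unchanged, does not increase $C_g^*$, and preserves membership in $L^1(\R^q)$ (the last two by the substitution ${\bf w} \mapsto -{\bf w}$), and it guarantees that for every $R>0$ the truncation $g_R := g\,\mathbbm{1}_{B_R}$ is again Hermitian symmetric, so that $H_R^*({\bf z}) := \int_{B_R} e^{i\langle {\bf w},{\bf z}\rangle} g({\bf w}) \d {\bf w}$ is real-valued (and well defined for all ${\bf z} \in \R^q$ because $g_R \in L^1(\R^q)$). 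Moreover $\max(1,\|{\bf w}\|^3) \geq 1$ on $B_R$ gives $\int_{B_R} |g({\bf w})|^2 \d {\bf w} \leq (C_g^*)^2$, hence $\int_{\R^q} \max(1,\|{\bf w}\|^{2q+6}) |g_R({\bf w})|^2 \d {\bf w} \leq \max(1,R^{2q+6})(C_g^*)^2 < \infty$, so $H_R^*$ meets the hypotheses of Proposition~\ref{prop:BarronFininteDim} with $g$ replaced by $g_R$.

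Applying Proposition~\ref{prop:BarronFininteDim} to $H_R^*$ produces an $\M_{1,N}$-valued random variable ${\bf W}$ (depending on $R$) with $\E[\|H_{\bf W}^{{\bf A},\bm{\zeta}}({\bf Z}) - H_R^*({\bf Z})\|^2] \leq \tilde{C}^*/N$, where $\tilde{C}^*$ is the constant \eqref{eq:CstarDef2} evaluated at $g_R$; since $g_R$ vanishes outside $B_R$, the integral over $\R^q \setminus B_R$ in \eqref{eq:CstarDef2} drops out and $\tilde{C}^*$ reduces exactly to $C_R^*$ of \eqref{eq:CstardefCor}. Combining this with the triangle inequality in $L^2(\P)$ for the real random variables $H_{\bf W}^{{\bf A},\bm{\zeta}}({\bf Z})$, $H_R^*({\bf Z})$, $H^*({\bf Z})$, and with the pointwise bound $|H^*({\bf z}) - H_R^*({\bf z})| = \bigl|\int_{\R^q \setminus B_R} e^{i\langle {\bf w},{\bf z}\rangle} g({\bf w}) \d {\bf w}\bigr| \leq \int_{\R^q \setminus B_R} |g({\bf w})| \d {\bf w}$ (valid for $\|{\bf z}\| \leq M$, hence $\mu_{\bf Z}$-a.s.), yields
\[ \E[\|H_{\bf W}^{{\bf A},\bm{\zeta}}({\bf Z}) - H^*({\bf Z})\|^2]^{1/2} \leq \frac{\sqrt{C_R^*}}{\sqrt{N}} + \int_{\R^q \setminus B_R} |g({\bf w})| \d {\bf w}, \]
which is \eqref{eq:MSENN2}.

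For the two corollaries I would use that for $R \geq 1$ one has $\max(MR,1) \leq \max(M,1)R$, $\mathrm{Vol}_q(B_R) = R^q \mathrm{Vol}_q(B_1)$ and $\int_{B_R} \max(1,\|{\bf w}\|^3)|g({\bf w})|^2 \d {\bf w} \leq (C_g^*)^2$, whence $C_R^* \leq c_{M,q}^2 R^{q+1} (C_g^*)^2$ and $\sqrt{C_R^*/N} \leq c_{M,q} C_g^* R^{(q+1)/2} N^{-1/2}$. In case (i), $\int_{\R^q \setminus B_R}|g({\bf w})| \d {\bf w} \leq R^{-k} I_k$, and the choice $R = N^{1/(2k+q+1)} \geq 1$ balances the two terms, each becoming $N^{-k/(2k+q+1)} = N^{-[2+(q+1)/k]^{-1}}$ up to the factors $c_{M,q}C_g^*$ and $I_k$, which is \eqref{eq:MSENN2PolMoment}. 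In case (ii), $\int_{\R^q \setminus B_R}|g({\bf w})| \d {\bf w} \leq e^{-CR^k} I_k$, and $R = (\log(\sqrt{N})/C)^{1/k}$ makes $e^{-CR^k} = N^{-1/2}$ and $R^{(q+1)/2} = C^{-(q+1)/(2k)} (\log \sqrt{N})^{(q+1)/(2k)}$, giving \eqref{eq:MSENN2ExpMoment} once $N$ is large enough that $R \geq 1$ and $\log \sqrt{N} \geq 1$.

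The argument is essentially bookkeeping, and the one point deserving care is the reduction to a real-valued, compactly-Fourier-supported target: one must be sure $H_R^*$ is genuinely real (the Hermitian symmetrization) and that it satisfies the dimension-dependent hypothesis \eqref{eq:BarronCondRd} (automatic on $B_R$), and then verify that the constant emerging from Proposition~\ref{prop:BarronFininteDim} collapses exactly to $C_R^*$ and not merely to something comparable. The rest is the $L^2(\P)$ triangle inequality and two elementary optimizations over $R$.
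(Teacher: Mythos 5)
Your proposal is correct and follows essentially the same route as the paper: truncate $g$ to $B_R$, apply Proposition~\ref{prop:BarronFininteDim} to the truncated target (whose constant \eqref{eq:CstarDef2} collapses to $C_R^*$ because the tail integral vanishes), control $|H^*-\bar H^*|$ by $\int_{\R^q\setminus B_R}|g|$, and optimize $R$ in the two regimes. Your extra step of Hermitian-symmetrizing $g$ so that the truncated target is genuinely real-valued (as required by Proposition~\ref{prop:BarronFininteDim}) is a point the paper's own proof passes over silently, and it is handled correctly since the symmetrization preserves $H^*$ on $B_M$ and does not increase $C_g^*$ or $I_k$.
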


\begin{remark} 
A sufficient condition for \eqref{eq:FourierRepresentation}-\eqref{eq:BarronCondRd2} to be satisfied is that $H^* \in L^1(\R^q)$ has an integrable Fourier transform and belongs to the Sobolev space $W^{2,2}(\R^{q})$, see for instance \cite[Theorem~6.1]{Folland1995}. The integrability conditions formulated in parts (i) and (ii) are related to additional smoothness properties of $H^*$, where a higher degree of smoothness means that the Fourier transform of $H^*$ decays more quickly and consequently, the expressions $I_k$ in  (i) or (ii) are finite for larger $k \in \N^+$. This results in a faster rate of convergence in the bounds \eqref{eq:MSENN2PolMoment} and \eqref{eq:MSENN2ExpMoment}. For instance, if the condition in part (i) is satisfied, then the error in \eqref{eq:MSENN2PolMoment} is of order $O(N^{-[2+\frac{(q+1)}{k}]^{-1}})$ which is close to $O(1/\sqrt{N})$ when $(q+1)/k$ is small. Thus, as in classical works (see for instance \cite{Mhaskar1996}) the approximation rate depends on the ratio of the input dimension and the smoothness of the function to be approximated. {A similar result for functions in $W^{k,2}(\R^{q})$ for $k>\frac{q}{2}+1$ is formulated in Corollary~\ref{cor:sobolev} below}. 
\end{remark}

\begin{proof}
Define $\bar{g}({\bf w}) = \mathbbm{1}_{B_R}({\bf w}) g({\bf w})$ and 
 \[ \bar{H}^*({\bf z}) = \int_{\R^q} e^{i \langle {\bf w}, {\bf z} \rangle } \bar{g}({\bf w}) \d {\bf w}. \]
Then 
\[ \int_{\R^q} \max(1,\|{\bf w}\|^{2q+6}) |\bar{g}({\bf w})|^2 \d {\bf w} \leq \max(1,R^{2q+3}) \int_{\R^q} \max(1,\|{\bf w}\|^{3}) |g({\bf w})|^2 \d {\bf w} < \infty \]
and so Proposition~\ref{prop:BarronFininteDim} (applied to $\bar{H}^*$) shows that there exists ${\bf W}$ such that 
\begin{equation}\label{eq:auxEq79}
\E[\|H_{\bf W}^{{\bf A},\bm{\zeta}}({\bf Z}) - \bar{H}^*({\bf Z})\|^2]   \leq \frac{C^*_R}{N}
\end{equation}
with $C^*_R$ given in \eqref{eq:CstardefCor}.
Furthermore, the triangle inequality yields 
\[ \begin{aligned}
\E[\|H^*({\bf Z}) - \bar{H}^*({\bf Z})\|^2]^{1/2} & = \E\left[\left|\int_{\R^q \setminus B_R} e^{i \langle {\bf w}, {\bf Z} \rangle } g({\bf w}) \d {\bf w}\right|^2\right]^{1/2}
\\ & \leq \int_{\R^q \setminus B_R} |g({\bf w})| \d {\bf w}.
\end{aligned} \]
Combining this with \eqref{eq:auxEq79} and the triangle inequality then yields \eqref{eq:MSENN2}. Finally, let us show that the assumptions in (i) and (ii) guarantee a certain decay of the last term in \eqref{eq:MSENN2}. 

\medskip

\noindent (i) Suppose $I_k = \int_{\R^q } \|{\bf w} \|^k |g({\bf w})| \d {\bf w} < \infty$ for some $k\in \N^+$. Then
\[\int_{\R^q \setminus B_R} |g({\bf w})| \d {\bf w} \leq \int_{\R^q \setminus B_R} \left(\frac{ \|{\bf w} \|}{R}\right)^k |g({\bf w})| \d {\bf w} \leq \frac{I_k}{R^k}.
\]
Thus, the right hand side in \eqref{eq:MSENN2} is bounded by  
\[ \frac{\sqrt{C^*_R}}{\sqrt{N}} + \frac{I_k}{R^k} \leq \frac{R^{\frac{q+1}{2}}c_{M,q} C_g^* }{\sqrt{N}} + \frac{I_k}{R^k}, 
\]
which becomes the right hand side of \eqref{eq:MSENN2PolMoment} if we take $R=N^\alpha$ and choose $\alpha$ to make both terms of the same order, i.e.\ $\alpha \frac{q+1}{2} - \frac{1}{2} = -\alpha k$. 

\medskip

\noindent (ii) Suppose $I_k = \int_{\R^q } \exp(C\|{\bf w} \|^k) |g({\bf w})| \d {\bf w} < \infty$ for some $k\in \N^+$ and $C>0$. Then 
\[\int_{\R^q \setminus B_R} |g({\bf w})| \d {\bf w} \leq \int_{\R^q \setminus B_R} \exp(C[\|{\bf w} \|^k - R^k]) |g({\bf w})| \d {\bf w} \leq \frac{I_k}{\exp(C R^k)}.
\]
Thus, taking $R=(\frac{\log(\sqrt{N})}{C})^{1/k}$, the right hand side in \eqref{eq:MSENN2} is bounded by  
\[ \frac{\sqrt{C^*_R}}{\sqrt{N}} + \frac{I_k}{\exp(C R^k)} \leq \frac{R^{\frac{q+1}{2}} c_{M,q} C_g^* }{\sqrt{N}} + \frac{I_k}{\sqrt{N}}.
\]\qed
\end{proof}

{
Recall that $W^{k,2}(\R^q)$ denotes for $k \in \N^+$  the Sobolev space consisting of all functions $u \colon \R^q \to \R$ whose mixed partial derivatives $D^{\alpha} u$ of order $\alpha \in \N^q$ with $\alpha_1+\cdots+\alpha_q \leq k$ satisfy $D^{\alpha} u \in L^2(\R^q)$. For $u \in L^1(\R^q)$ we denote by $\widehat{u}(\bm{\xi})=\int_{\R^q}  e^{-i \langle \bm{\xi}, {\bf z} \rangle } u({\bf z}) \d {\bf z}$, $\bm{\xi} \in \R^q$, the Fourier transform of $u$. By \cite[Theorem~6.1]{Folland1995} the space $W^{k,2}(\R^q)$ consists of precisely those $u \in L^2(\R^q)$ for which the norm 
\[
\| u \|_{k} := \left(\int_{\R^q} |\widehat{u}(\bm{\xi})|^2 (1+\|\bm{\xi}\|^2)^k  \d \bm{\xi}\right)^{1/2}
\] 
is finite.}

{ 
The next corollary specializes Corollary~\ref{cor:RandomNNApprox} to functions in $W^{k,2}(\R^q)$ for sufficiently large $k$. Analogous results could be derived for the Sobolev spaces $W^{k,p}(\R^q)$ for $p>1$ and for generalized Sobolev spaces (see \cite[Section~6.2]{Bergh1976}) with $p=1$ even without restrictions on $k$.
\begin{corollary}\label{cor:sobolev}
Let $k\in \N$ with $k \geq \frac{q}{2}+1 + \varepsilon$ for some $\varepsilon>0$ and suppose $H^* \in W^{k,2}(\R^q) \cap L^1(\R^q)$.
Let $R=N^{1/(2k-2\varepsilon+1)}$, suppose the rows of {the $\M_{N,N}$-valued random matrix} ${\bf A}$ are {i.i.d. random variables with} uniform distribution on $B_R \subset \R^q$, {suppose the entries of the $\R^N$-valued random vector $\bm{\zeta}$ are i.i.d. random variables} uniformly distributed on $[-\max(M R,1),\max(M R,1)]$, {assume that ${\bf A}$ and $\bm{\zeta}$ are independent and let} $\sigma \colon \R \to \R$ be given as $\sigma(x)=\max(x,0)$.
Then, there exists ${\bf W}$ (a  $\M_{1,N}$-valued random variable) and a constant $C>0$ (depending on $q$ and $M$, but independent of $H^*$, $N$) such that 
\begin{equation}\begin{aligned} \label{eq:NNMSE}  \E[\|H_{\bf W}^{{\bf A},\bm{\zeta}}({\bf Z}) - H^*({\bf Z})\|^2]^{1/2}   \leq C \| H^* \|_{k} N^{-1/\alpha}   \end{aligned} \end{equation}
with $\alpha = 2+\frac{(q+1)}{k-q/2-\varepsilon}$. $C$ is explicitly given in \eqref{eq:cdef}.
\end{corollary}
}

{
\begin{remark}
In the neural network $H_{\bf W}^{{\bf A},\bm{\zeta}}$ only the weights ${\bf W}$ are trainable, whereas ${\bf A},\bm{\zeta}$ are generated randomly. Therefore, it is clear that the approximation capabilities of these networks are smaller than those of neural networks in which all parameters can be trained. This intuition is confirmed when comparing the  error rate $1/\alpha \leq 1/2$ in  \eqref{eq:NNMSE} to the rate  $k/q$ obtained in \cite{Mhaskar1996}, \cite{Maiorov2000}, since $k/q \geq 1/2$ for $k\geq q/2$.  
However, the advantage of the result in Corollary~\ref{cor:sobolev} is that training the random neural network $H_{\bf W}^{{\bf A},\bm{\zeta}}$ is straightforward: it only requires to solve the (convex) optimization problem over ${\bf W}$, which is mathematically very well-understood. In contrast, in the case of fully trainable neural networks one typically uses stochastic gradient descent type algorithms for parameter optimization, for which a rigorous mathematical error analysis for general shallow neural networks is challenging.
\end{remark}
}

{
\begin{proof} 
Firstly, using the Cauchy-Schwartz inequality and the assumptions on $H^*$ we obtain that $\widehat{H^*} \in L^1(\R^q)$, see \eqref{eq:auxEq103}. Hence, the Fourier inversion theorem yields the representation \eqref{eq:FourierRepresentation} with $g = (2 \pi)^{-q}  \widehat{H^*} \in L^1(\R^q)$ and all ${\bf z} \in \R^q$. Thus, the constant $C_g^*$ in \eqref{eq:BarronCondRd2} is bounded by
\begin{equation*}  C_g^* \leq (2 \pi)^{-q}  \| H^* \|_{2}. \end{equation*}
Furthermore, 
\begin{equation} \label{eq:auxEq103} \begin{aligned}
\left(\int_{\R^q } \|{\bf w} \|^{k-s} |g({\bf w})| \d {\bf w} \right)^2  \leq \int_{\R^q } (1+\|{\bf w} \|^2)^{-s}  \d {\bf w}  \int_{\R^q } (1+\|{\bf w} \|^2)^{k} |  \widehat{H^*} ({\bf w})|^2 \d {\bf w},
\end{aligned}
\end{equation}
which is finite for $s>q/2$ (see e.g. \cite[p.193]{Folland1995}). Choosing $s = q/2+\varepsilon$ yields $k-q/2-\varepsilon\geq1 $ and we may therefore apply Corollary~\ref{cor:RandomNNApprox} to obtain that there exists ${\bf W}$ (a  $\M_{1,N}$-valued random variable) such that 
\begin{equation*}\begin{aligned}  \E[\|H_{\bf W}^{{\bf A},\bm{\zeta}}({\bf Z}) - H^*({\bf Z})\|^2]^{1/2} &  \leq N^{-[2+\frac{(q+1)}{k-s}]^{-1}} \left[c_{M,q} C_g^* + I_{k-s}\right]
\\ & \leq N^{-[2+\frac{(q+1)}{k-q/2-\varepsilon}]^{-1}} C \| H^* \|_{k}   \end{aligned} \end{equation*}
with $c_{M,q}^2 =  16\max(M,1) \mathrm{Vol}_q(B_1) (M+1)^2 (M^3+M+2)$ and 
\begin{equation} \label{eq:cdef} C = c_{M,q} (2 \pi)^{-q} + \left(\int_{\R^q } (1+\|{\bf w} \|^2)^{-s}  \d {\bf w} \right)^{1/2}.\end{equation} This completes the proof. 
\qed
\end{proof}
}

Finally, we prove a further consequence of Theorem~\ref{thm:approxstatic}. The result in Proposition~\ref{prop:lowerDimApprox} below allows for a larger class of functions $H^*$ (possibly defined in terms of essentially lower-dimensional functions, for instance as a sum of univariate functions) and shows in particular how the sampling scheme in the previous results can be modified in order to cover this more general case; while the rows of ${\bf A}$ were sampled from the uniform distribution on the ball $B_R \subset \R^q$ in Corollary~\ref{cor:RandomNNApprox} above, in Proposition~\ref{prop:lowerDimApprox} the matrix ${\bf A}$ is in general a sparse random matrix with entries drawn from lower dimensional balls $B_R^k \subset \R^k$, $k =1,\ldots,q$. 

\begin{proposition}\label{prop:lowerDimApprox}
Suppose
 $H^* \colon \R^q \to \R$ can be represented as
 \[ H^*({\bf z}) = \int_{\R^q} e^{i \langle {\bf w}, {\bf z} \rangle } \hat{\mu}(\d {\bf w}) \]
 for some complex measure $\hat{\mu}$ on $(\R^q,\mathcal{B}(\R^q))$ and all ${\bf z} \in \R^q$ with $\|{\bf z}\| \leq M$. Assume that 
 \begin{equation*}  \int_{\R^q} \max(1,\|{\bf w}\|^{2}) |\hat{\mu}|(\d {\bf w}) < \infty. \end{equation*}
 Suppose $K_1,\ldots,K_N$ are i.i.d.\ random variables with values in $\{1,\ldots,q\}$ and for $i=1,\ldots,N$, conditional on $K_i = k$ the $i$-th row ${\bf A}_i$ of ${\bf A}$ is sampled as follows:
 \begin{itemize}
 \item select (uniformly randomly on $\{1,\ldots,q\}$) $k$ non-zero entries
 \item draw these entries from the uniform distribution on $B_R \subset \R^{k}$ 
 \item set the remaining $N-k$ entries to $0$
 \end{itemize} and $\zeta_i$ is sampled uniformly on $[-\max(M R,1),\max(M R,1)]$. For $k=1,\ldots,K$ denote by $\lambda_1$ the Lebesgue-measure on $\R$, let $p_k = \P(K_1=k)$ and assume that 
 \begin{equation} \label{eq:absCont}
 \hat{\mu} \ll \sum_{k=1}^q p_k \sum_{\substack{\mu_1,\ldots,\mu_q \in \{\delta_0,\lambda_1 \} \\ \#\{j \,:\, \mu_j = \lambda_1 \} = k }} \mu_1 \otimes \cdots \otimes \mu_q.
 \end{equation}
Let $\sigma \colon \R \to \R$ be given as $\sigma(x)=\max(x,0)$. Then
 \begin{itemize}
 \item[(i)] $\mathbbm{1}_{B_R}|\hat{\mu}|+\mathbbm{1}_{B_R}|\hat{\mu}|^- \ll \pi_{\mathcal{X}}$, where $\pi_{\mathcal{X}}$ denotes the distribution of ${\bf A}_i$,
 \item[(ii)] if $g = \frac{\d (\mathbbm{1}_{B_R}|\hat{\mu}|+\mathbbm{1}_{B_R}|\hat{\mu}|^-)}{ \d \pi_{\mathcal{X}}}$  satisfies 
 \begin{equation}\label{eq:auxEq82} \int_{B_R}  \max(\|{\bf w}\|^3,1) g({\bf w})^2 \pi_\mathcal{X}(\d{\bf w}) < \infty,\end{equation}  then 
   there exists ${\bf W}$ (a  $\M_{1,N}$-valued random variable) such that 
  \begin{equation*}\begin{aligned}  \E[\|H_{\bf W}^{{\bf A},\bm{\zeta}}({\bf Z}) - H^*({\bf Z})\|^2]^{1/2}   \leq \frac{\sqrt{C}}{\sqrt{N}} + \int_{\R^q \setminus B_R} |\hat{\mu}|(\d {\bf w}), \end{aligned} \end{equation*}
  where 
  \begin{equation*}
  C = 8 M^2 \max(MR,1) \max(M,4)  \int_{B_R}  \max(\|{\bf w}\|^3,1) g({\bf w})^2 \pi_\mathcal{X}(\d{\bf w}).
   \end{equation*}
 \end{itemize}

\end{proposition}

\begin{proof}
To prove (i), suppose $B \in \mathcal{B}(\R^q)$ satisfies $\pi_{\mathcal{X}}(B)=0$. Let ${\bf U}_k \sim \pi_k$, where $\pi_k$ denotes the uniform distribution on $B_R^k = B_R \subset \R^k$. By construction, for all $k=1,\ldots,q$ with $p_k>0$ and all $j_1,\ldots,j_k \in \{1,\ldots,q\}$ we have (with $T_{j_1,\ldots,j_k} $ denoting the map that embeds $ \R^k$ in $ \R^q$ by inserting $0$ at each component $j \notin \{j_1,\ldots,j_k\}$) that 
\[\begin{aligned} 0 & = \P({\bf A}_1 \in B | K_1=k, A_{1,j_1} \neq 0,\ldots,A_{1,j_k} \neq 0)  \\ & = \P(T_{j_1,\ldots,j_k}({\bf U}_k) \in B) =  \pi_k(T_{j_1,\ldots,j_k}^{-1}(B)).  \end{aligned} \]
Using that $\pi_k$ has a strictly positive Lebesgue density on $B_R^k \subset \R^k$, this implies that $T_{j_1,\ldots,j_k}^{-1}(B) \cap B_R^k$ is a Lebesgue-nullset in $\R^k$. 
Therefore for $\mu_{j_1,\ldots,j_k}$ = $\mu_1 \otimes \cdots \otimes \mu_q$ with $\mu_{j_i} = \lambda_1$ and $\mu_j = \delta_0$ for $j \notin \{j_1,\ldots,j_q\}$ it follows that 
\[ \mu_{j_1,\ldots,j_k}(B \cap B_R^q) = \int_{\R^k} \mathbbm{1}_{B}(T_{j_1,\ldots,j_k}(w_1,\ldots,w_k)) \mathbbm{1}_{ B_R^k}(w_1,\ldots,w_k) \d w_1 \cdots \d w_k = 0.  \]
This shows that $B \cap B_R^q$ is a nullset for each of the measures on the right hand side of \eqref{eq:absCont} and so, by \eqref{eq:absCont}, also for $\hat{\mu}$ (and consequently for $|\hat{\mu}|$ and $|\hat{\mu}|^-$).

To show (ii) note that $\P$-a.s. $\|{\bf A}_i\| \leq R$ and so we may
apply Theorem~\ref{thm:approxstatic} to $\mathcal{X} = \R^q$ and the function $\bar{H}^*({\bf z}) = \int_{\R^q} e^{i \langle {\bf w}, {\bf z} \rangle } \mathbbm{1}_{B_R}({\bf w})\hat{\mu}(\d {\bf w})$. By assumption on $\zeta_i$, the function $F_\pi$ appearing in  Theorem~\ref{thm:approxstatic} is given for $|x|\leq \max(MR,1)$ as $F_\pi(x) = 2 \int_{-x}^0 2 \max(MR,1) \d u =4 x \max(MR,1)$ and so $C^* = C_1^*$ in Theorem~\ref{thm:approxstatic} becomes
\[\begin{aligned} C^* & = 4 M^2 \max(MR,1) \left(M \int_{B_R} \|{\bf w}\|^3 g({\bf w})^2 \pi_\mathcal{X}(\d{\bf w}) \right. \\ & \qquad + \left. 4 \int_{B_R}  \max(\|{\bf w}\|^2,1) g({\bf w})^2 \pi_\mathcal{X}(\d{\bf w}) \right)
\\ & \leq 8 M^2 \max(MR,1) \max(M,4)  \int_{B_R}  \max(\|{\bf w}\|^3,1) g({\bf w})^2 \pi_\mathcal{X}(\d{\bf w}) \end{aligned}\]
and \eqref{eq:VarianceHilbert} is indeed satisfied by \eqref{eq:auxEq82}. 
The statement then follows precisely as in the proof of Corollary~\ref{cor:RandomNNApprox} by estimating the difference $|\bar{H}^*({\bf z})- H^*({\bf z})| \leq \int_{\R^q \setminus B_R} |\hat{\mu}|(\d {\bf w}) $ for ${\bf z} \in B_M \subset \R^q$ and applying the triangle inequality. \qed
\end{proof}

\subsection{Universal approximation by random ReLU networks}
\label{sec:staticUniversal}
In this subsection we present a further corollary, which proves that feedforward neural networks with randomly generated inner weights are universal approximators in $L^2(\R^q,\mu)$ for any probability measure $\mu$ on $(\R^q,\mathcal{B}(\R^q))$. 

To formulate the result let us first introduce the scheme according to which the weights are sampled. For any $\rho>1$, $R >0$ consider the following scheme to randomly generate weights:
\begin{itemize}
\item[(i)] Let ${\bf A}_1,{\bf A}_2,\ldots$ be i.i.d. random vectors drawn from the uniform distribution on the ball $B_R \subset \R^q$,
\item[(ii)] let $\zeta_1,\zeta_2,\ldots$ be i.i.d.\ uniformly distributed on $[-\rho,\rho]$, independent of $\{ {\bf A}_i \}_{i \in \N^+}$. 
\end{itemize}
{Note that the only parameters that need to be trained for the neural networks in Corollary~\ref{cor:2} are the outer weights $W_1,\ldots,W_N$ (once $N$ is fixed and the inner weights ${\bf A}_1,{\bf A}_2,\ldots$, $\zeta_1,\zeta_2,\ldots$ are sampled randomly). These  outer weights can be trained using least-squares minimization.}

\begin{corollary}\label{cor:2} Let $\mu$ be a probability measure on $\R^q$, $G \in L^2(\R^q,\mu)$ and let $\sigma \colon \R \to \R$ be given as $\sigma(x)=\max(x,0)$. Then for any $\varepsilon>0$, $\delta \in (0,1)$ there exist $N \in \N^+$, $R>0$, $\rho>1$ and {real valued random variables $W_1,\ldots,W_N$ (``outer weights'')} such that the random feedforward neural network (with {``inner weights'' $({\bf A}_1,\zeta_1),({\bf A}_2,\zeta_2),\ldots$} sampled as in (i)-(ii)) {specified by}
\[
G_N({\bf z}) = \sum_{i=1}^N W_i \sigma({\bf A}_i \cdot {\bf z} + \zeta_i), \quad {\bf z} \in \R^q
\]
approximates $G$ in $L^2(\R^q,\mu)$ up to precision $\varepsilon$ with probability $1-\delta$, that is, 
\[
 \int_{\R^q} |G({\bf z})-G_N({\bf z})|^2 \mu(\d {\bf z}) < \varepsilon^2.
\]
\end{corollary}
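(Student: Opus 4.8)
The plan is to deduce the universal-approximation statement from the quantitative bound of Proposition~\ref{prop:BarronFininteDim}, after first replacing the merely $L^2$ target $G$ by a smooth compactly supported one and localizing to a ball that carries almost all the mass of $\mu$.

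\emph{Reduction to a regular target.} Since $\mu$ is a finite Borel measure on $\R^q$, the space $C_c^\infty(\R^q)$ is dense in $L^2(\R^q,\mu)$: continuous compactly supported functions are dense in $L^2$ of a finite Radon measure, and mollification upgrades this to smoothness. Given $\varepsilon>0$ I would fix $\tilde G\in C_c^\infty(\R^q)$ with $\|G-\tilde G\|_{L^2(\mu)}^2<\varepsilon^2/8$, let $M_0$ be a radius with $\operatorname{supp}\tilde G\subset B_{M_0}$, and, using tightness of $\mu$, choose $M\ge M_0$ large enough that $\int_{\R^q\setminus B_M}|G|^2\,\d\mu<\varepsilon^2/8$ (and $\mu(\R^q\setminus B_M)$ as small as is needed later). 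Because $\tilde G$ is Schwartz, so is its Fourier transform, whence $\tilde G({\bf z})=\int_{\R^q}e^{i\langle{\bf w},{\bf z}\rangle}g({\bf w})\,\d{\bf w}$ with $g=(2\pi)^{-q}\widehat{\tilde G}$ again Schwartz; in particular $\int_{\R^q}\max(1,\|{\bf w}\|^{2q+6})|g({\bf w})|^2\,\d{\bf w}<\infty$, so $\tilde G$ fulfils the hypotheses of Proposition~\ref{prop:BarronFininteDim}.

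\emph{Applying the quantitative bound on $B_M$.} I would now fix $R>0$ with $MR>1$, set $\rho:=MR$ (so that (i)--(ii) of the statement is exactly the sampling scheme of Proposition~\ref{prop:BarronFininteDim}), and let ${\bf Z}$ be independent of $({\bf A}_i,\zeta_i)_{i}$ with law $\mu_{{\bf Z}}=\mu(\,\cdot\cap B_M)/\mu(B_M)$, so that $\|{\bf Z}\|\le M$ $\P$-a.s. Proposition~\ref{prop:BarronFininteDim} then produces outer weights $W_1,\dots,W_N$ --- a measurable function of $({\bf A}_i,\zeta_i)_i$, built by importance sampling against the representing density $\pi^*$ from Proposition~\ref{prop:SmoothRepresentation} --- such that, with probability at least $1-\delta/2$,
\[
\int_{B_M}|\tilde G({\bf z})-G_N({\bf z})|^2\,\d\mu({\bf z})\ \le\ \int_{\R^q}|\tilde G-G_N|^2\,\d\mu_{{\bf Z}}\ \le\ \Big(\tfrac{2\sqrt{C^*}}{\delta\sqrt N}\Big)^2 ,
\]
where $G_N({\bf z})=\sum_{i=1}^N W_i\,\sigma({\bf A}_i\cdot{\bf z}+\zeta_i)$ and $C^*$ is the explicit constant of \eqref{eq:CstarDef2} (depending on $M,R,g$ but not on $N$). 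Choosing $N$ large makes the right-hand side $<\varepsilon^2/8$, and together with $\|G-\tilde G\|_{L^2(\mu)}^2<\varepsilon^2/8$ this bounds $\int_{B_M}|G-G_N|^2\,\d\mu$.

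\emph{The tail, and the main difficulty.} It remains to control $\int_{\R^q\setminus B_M}|G-G_N|^2\,\d\mu\le 2\int_{\R^q\setminus B_M}|G|^2\,\d\mu+2\int_{\R^q\setminus B_M}|G_N|^2\,\d\mu$, the first term being $<\varepsilon^2/4$ by the choice of $M$. The second term is where the real work lies, and is the point at which this random-weights statement differs from the classical one: with freely chosen weights one would simply take the approximant to vanish off a ball, but here the inner weights are drawn from the compactly supported law of (i)--(ii), so $G_N$ genuinely grows (generically linearly) at infinity and cannot be clipped. I would quantify this growth by the crude pointwise bound $|G_N({\bf z})|\le(R\|{\bf z}\|+\rho)\,\tfrac1N\sum_{i=1}^N|V_i|$, with $V_i=\tfrac{\d\alpha}{\d\pi}({\bf A}_i,\zeta_i)=2\rho\,\mathrm{Vol}_q(B_R)\,\pi^*({\bf A}_i,\zeta_i)$, together with the variance estimate $\E[(\tfrac1N\sum_i|V_i|)^2]\le\E[V_1^2]=2\rho\,\mathrm{Vol}_q(B_R)\int_{\R^{q+1}}\pi^*(\bm\omega)^2\,\d\bm\omega<\infty$, which is finite by Proposition~\ref{prop:SmoothRepresentation} and depends only on $M,R,g$. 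Hence $\E\!\big[\int_{\R^q\setminus B_M}|G_N|^2\,\d\mu\big]$ is at most a constant (in $M,R,g$) times $\int_{\R^q\setminus B_M}(R\|{\bf z}\|+\rho)^2\,\d\mu$, which can be made arbitrarily small by enlarging $M$ since $\mu$ is a probability measure (and which vanishes identically if $\mu$ has bounded support); a last application of Markov's inequality gives the tail bound with probability $\ge1-\delta/2$, and intersecting the two events of probability $\ge1-\delta/2$ yields the claim. The only nontrivial step is this tail estimate; the density reduction and the verification that the mollified target satisfies the Fourier hypothesis of Proposition~\ref{prop:BarronFininteDim} are routine.
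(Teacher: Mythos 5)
Your reduction to a smooth compactly supported target and the application of Proposition~\ref{prop:BarronFininteDim} on the ball $B_M$ coincide with the paper's argument (the paper works via Corollary~\ref{cor:RandomNNApprox} and realizes the restricted input as the truncated random variable ${\bf Z}=\tilde{\bf Z}\mathbbm{1}_{B_M}(\tilde{\bf Z})+{\bf z}_0\mathbbm{1}_{\R^q\setminus B_M}(\tilde{\bf Z})$ rather than as the conditioned measure, but this is immaterial). Where you genuinely depart from the paper is the tail term $\int_{\R^q\setminus B_M}|G_N|^2\,\d\mu$: the paper's proof contains no such estimate --- it passes from $\E[|G_N({\bf Z})-H^*({\bf Z})|^2]$ to $\E\bigl[\int_{\R^q}|G_N-H^*|^2\,\d\mu\bigr]$ using only $H^*({\bf Z})=H^*(\tilde{\bf Z})$, which silently identifies $G_N(\tilde{\bf Z})$ with $G_N({\bf Z})$ off $B_M$. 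You are right that this is where the real difficulty sits.

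However, your resolution of it has a gap. The bound $\E\bigl[\int_{\R^q\setminus B_M}|G_N|^2\,\d\mu\bigr]\leq\E[V_1^2]\int_{\R^q\setminus B_M}(R\|{\bf z}\|+\rho)^2\,\mu(\d{\bf z})$ is fine, but the final step --- ``can be made arbitrarily small by enlarging $M$ since $\mu$ is a probability measure'' --- is false without a second-moment assumption: if $\int_{\R^q}\|{\bf z}\|^2\,\mu(\d{\bf z})=\infty$, that tail integral equals $+\infty$ for every $M$. Nor is this a removable technicality along your route: for $q\ge2$ and almost every realization of the ${\bf A}_i$ (nonzero and pairwise non-proportional), any nonzero readout ${\bf W}$ yields a $G_N$ whose positively homogeneous part ${\bf u}\mapsto\sum_iW_i\sigma({\bf A}_i\cdot{\bf u})$ is not identically zero, so $|G_N|$ grows linearly on an open cone of directions and $G_N\notin L^2(\mu)$ for a radially heavy-tailed $\mu$. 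Hence the tail bound can only be obtained under an additional hypothesis such as $\int\|{\bf z}\|^2\,\d\mu<\infty$ or bounded support of $\mu$. (A minor secondary imprecision: Proposition~\ref{prop:SmoothRepresentation}(iv) controls $\int\|\bm{\omega}\|^2\pi^*(\bm{\omega})^2\,\d\bm{\omega}$, not $\int\pi^*(\bm{\omega})^2\,\d\bm{\omega}$, so the finiteness of $\E[V_1^2]$ needs a separate check from the explicit construction of $\pi^*$; this is fixable.) Under a finite-second-moment or bounded-support hypothesis on $\mu$ your argument goes through and is in fact more careful than the published one.
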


\begin{proof}
Firstly, by using \cite[Lemma~1.33]{Kallenberg2002} and the fact that {the set of compactly supported infinitely often differentiable functions} $C_c^\infty(\R^q)$ is dense in {the space of continuous functions with compact support} $C_c(\R^q)$ in the supremum norm we find $H^* \in C_c^\infty(\R^q)$ satisfying
\begin{equation}\label{eq:auxEq101}
\left[\int_{\R^q} |H^*({\bf z})-G({\bf z})|^2 \mu(\d {\bf z})\right] ^{1/2}< \frac{\varepsilon \sqrt{\delta}}{2}.
\end{equation}
Denoting by $\widehat{H^*}({\bf w})=\int_{\R^q}  e^{-i \langle {\bf w}, {\bf z} \rangle } H^*({\bf z}) \d {\bf z}$ the Fourier transform of $H^*$ and setting $g = (2\pi)^{-q} \widehat{H^*}$, it follows that $H^*$ can be represented as \eqref{eq:FourierRepresentation} for all ${\bf z} \in \R^q$, that $g \in L^1(\R^q)$ and \eqref{eq:BarronCondRd2} holds. 
Choose $M>0$ large enough to guarantee that the support of $H^*$ is contained in $B_M$, denote by $\tilde{{\bf Z}}$ a random variable with distribution $\mu$ and set ${\bf Z}=\tilde{{\bf Z}}\mathbbm{1}_{B_M}(\tilde{{\bf Z}})+{\bf z}_0 \mathbbm{1}_{\R^q \setminus B_M}(\tilde{{\bf Z}})$ for an arbitrary ${\bf z}_0 \in \overline{B_M} \setminus B_M$. Then $\|{\bf Z}\|\leq M$ and $H^*({\bf Z}) = H^*(\tilde{{\bf Z}})$ and all the assumptions of Corollary~\ref{cor:RandomNNApprox} are satisfied. We now select the hyperparameters as follows: choose $R>0$ large enough to guarantee $\int_{\R^q \setminus B_R} |g({\bf w})| \d {\bf w} < \frac{\varepsilon \sqrt{\delta}}{4}$ and then take $N \in \N^+$ to guarantee $\frac{\sqrt{C^*_R}}{\sqrt{N}} < \frac{\varepsilon \sqrt{\delta}}{4}$ (with $C^*_R $ given in \eqref{eq:CstardefCor}). Furthermore, let $\rho = \max(M R,1)$. Inserting these estimates in the right hand side of \eqref{eq:MSENN2} and applying Corollary~\ref{cor:RandomNNApprox} shows that there exists  ${\bf W}=(W_1 \, \cdots \, W_N)$ (a  $\M_{1,N}$-valued random variable) such that 
\[
\E[|H_{\bf W}^{{\bf A},\bm{\zeta}}({\bf Z}) - H^*({\bf Z})|^2]^{1/2} < \frac{\varepsilon \sqrt{\delta}}{2}.
\]
Combining this with \eqref{eq:auxEq101}, $H^{{\bf A},\bm{\zeta}}_{\bf W}({\bf z})= {\bf W} \bm{\sigma}({\bf A} {\bf z} + \bm{\zeta}) = G_N({\bf z})$, $H^*({\bf Z}) = H^*(\tilde{{\bf Z}})$ and the triangle inequality yields
\[
\E\left[\int_{\R^q} |G({\bf z})-G_N({\bf z})|^2 \mu(\d {\bf z})\right]^{1/2} < \varepsilon \sqrt{\delta}. \]
Applying Markov's inequality then shows that 
\[ 
\P\left( \left(\int_{\R^q} |G({\bf z})-G_N({\bf z})|^2 \mu(\d {\bf z}) \right)^{1/2} > \varepsilon \right) \leq \frac{1}{\varepsilon^2} \E\left[\int_{\R^q} |G({\bf z})-G_N({\bf z})|^2 \mu(\d {\bf z})\right] < \delta,
\]
as claimed.\qed
\end{proof}

\section{Approximation Error Estimates For Echo State Networks}\label{sec:ESN}
In the results formulated above in Section~\ref{sec:Static} we were concerned with the static situation and approximations based on random neural networks. We now turn to the dynamic case. Thus, we consider $D_d \subset \R^d$ and inputs given by semi-infinite sequences in $\mathcal{X}=(D_d)^{\Z_-}$. The unknown mapping that needs to be approximated is denoted by $H^* \colon (D_d)^{\Z_-} \to \R^m$ and is called a functional (see also Section~\ref{sec:Preliminaries} for further preliminaries on the dynamic situation). In applications, $H^*$ is typically approximated by reservoir functionals. Recall that a reservoir functional is a mapping $H^{RC}$ defined as the input-to-solution map   $ \mathcal{X} \ni {\bf z} \mapsto {\bf y}_0 \in \R^m$ of the state space system \eqref{eq:RCSystemDet}-\eqref{eq:RCSystemDetYeqn}.  The goal of this section is to derive bounds for the error that arises when approximating the functional $H^*$ by such reservoir functionals. We will be focusing on two of the most prominent families of reservoir systems, namely linear systems with neural network readouts (Section~\ref{sec:dynLin}) as well as echo state networks (Section~\ref{sec:dynESN}). Beforehand, in Section~\ref{sec:dynRegularity} we  introduce the setting in more detail, describe the regularity assumption that is imposed on $H^*$ in both cases and characterize a general class of examples in which it is satisfied. As a corollary of the approximation error bounds derived in Section~\ref{sec:dynESN} we prove in Section~\ref{sec:dynUniversality} that echo state networks with randomly generated recurrent weights are universal approximators.
This proves, in particular, that echo state networks with randomly generated weights are capable of approximating a large class of input/output systems arbitrarily well and, in conjunction with the error estimates in Theorem~\ref{thm:ESNErrorBound}, thus provides the first mathematical explanation for the empirically observed success of echo state networks in the learning of that kind of systems.

\subsection{Setting and regular functionals}
\label{sec:dynRegularity}

In order to approximate the unknown functional $H^* \colon (D_d)^{\Z_-} \to \R^m$, in applications the procedure is typically as follows. In a first step, the reservoir map $F$ in \eqref{eq:RCSystemDet} is fixed (often generated randomly). Then the readout function $h$ in \eqref{eq:RCSystemDetYeqn} is trained by minimizing a prefixed loss function in order to approximate $H^*$ as well as possible. 
In what follows we will be  interested in quantifying the error committed when using an approximating reservoir functional for $H^*$ \textit{conditional on} the random elements used to generate it and with respect to the $L^2((D_d)^{\Z_-},\mu_{{\bf Z}})$-norm for a probability measure $\mu_{{\bf Z}}$ on the space of inputs $((D_d)^{\Z_-},\mathcal{B}((D_d)^{\Z_-})$. More specifically, throughout this section, ${\bf Z}$ is a $(D_d)^{\Z_-}$-valued random variable, that is, a discrete-time stochastic process, we denote by $\mu_{{\bf Z}}$ its law on $(D_d)^{\Z_-}$ and
we assume that ${\bf 0} \in D_d \subset B_M \subset \R^d$.  To simplify the statements we choose $m=1$ here, but all the results can be directly generalized to $m \in \N^+$. 

The functionals $H^*$, for which the approximation bounds in Section~\ref{sec:dynLin} and Section~\ref{sec:dynESN} can be derived, are required to satisfy certain regularity assumptions. These will be stated in Assumption~\ref{ass:regularityFctl} below. Beforehand, we introduce a Lipschitz-continuity condition which quantifies how quickly $H^*$ forgets  past inputs and is thus linked to its \textit{memory}, see also \cite{RC9} for a thorough discussion.

\begin{definition} 
Consider a sequence $w \in (0,\infty)^{\Z_-}$ with $\sum_{j \in \Z_-} |j|w_j < \infty$.
We say that $H^*$ is \textit{$w$-Lipschitz continuous}, if there exists $L > 0$ such that 
\begin{equation} \label{eq:ZFMPProperty}
|H^*({\bf u})-H^*({\bf v}) | \leq  L \|{\bf u}-{\bf v}\|_{1,w} 
\end{equation} 
for all ${\bf u}= ({\bf u}_t)_{t \in \Z_-} \in (D_d)^{\Z_-}$, ${\bf v}= ({\bf v}_t)_{t \in \Z_-} \in (D_d)^{\Z_-}$, where 
\[ \|{\bf u}-{\bf v}\|_{1,w} := \sum_{i=0}^{\infty} w_{-i} \|{\bf u}_{-i} - {\bf v}_{-i}\|.  \]
\end{definition}

\begin{assumption}\label{ass:regularityFctl}
Suppose that $H^* \colon (D_d)^{\Z_-} \to \R$ is $w$-Lipschitz continuous for some $w \in (0,\infty)^{\Z_-}$ with $\sum_{j \in \Z_-} |j|w_j < \infty$ and assume that for any $T \in \N^+$: 
\begin{description}
\item[(i)] The restriction of $H^*$ to sequences of length $T$, which is given by the function $H^*_T \colon (D_d)^{T+1} \to \R$ defined by $H^*_T({\bf z}_0,\ldots,{\bf z}_{-T}) := H^*(\ldots,0,{\bf z}_{-T},\ldots,{\bf z}_{0})$, can be represented as  
\[ H_T^*({\bf u}) = \int_{\R^q} e^{i \langle {\bf w}, {\bf u} \rangle } g_T({\bf w}) \d {\bf w} \]
for a $\C$-valued function $g_T \in L^1(\R^q)$ and all ${\bf u} = ({\bf z}_0,\ldots,{\bf z}_{-T}) \in (D_d)^{T+1} \subset \R^q$, with $q:=d(T+1)$.
\item[(ii)] 
 \begin{equation} \label{eq:BarronCondRd3} \int_{\R^q} \max(1,\|{\bf w}\|^{3}) |g_T({\bf w})|^2 \d {\bf w} < \infty. \end{equation}
\end{description}

\end{assumption}

 We now provide a general class of examples that satisfy Assumption~\ref{ass:regularityFctl}. This class includes, for example, state affine systems, linear systems with polynomial readouts, and trigonometric state affine systems as long as the matrix coefficients in these systems fulfill certain conditions  that guarantee that the condition (i) in the next proposition is satisfied. We refer to \cite{RC10,RC20,RC8,RC7,RC6,RC9} for a detailed discussion of these systems.

\begin{proposition} Let $\rho >0$ and suppose $H^*$ is the reservoir functional associated to the reservoir system \eqref{eq:RCSystemDet}-\eqref{eq:RCSystemDetYeqn} determined by the restriction to $\overline{B_\rho} \times D_d$ and $\overline{B_\rho} $ of  the maps $F \colon \R^{N^*} \times \R^d \to \R^{N^*}$ and $h \colon \R^{N^*} \to \R$, respectively, and that satisfy the following hypotheses. Firstly,  $F(\overline{B_\rho} \times D_d) \subset \overline{B_\rho}$ and, additionally,  there exist  $r \in (0,1)$, $L_F, L_h >0$, such that
\begin{itemize}
\item[(i)] for any ${\bf z} \in D_d$, $\left. F \right|_{\overline{B_\rho} \times D_d}(\cdot,{\bf z})$ is an $r$-contraction,
\item[(ii)] for any ${\bf x} \in \overline{B_\rho}$, $\left. F \right|_{\overline{B_\rho} \times D_d}({\bf x},\cdot)$ is $L_F$-Lipschitz, 
\item[(iii)] $F$ and $h$ are both infinitely differentiable. 
\end{itemize}
Then $H^*= h(H_{\left. F \right|_{\overline{B_\rho} \times D_d}})$ satisfies Assumption~\ref{ass:regularityFctl}. 
\end{proposition}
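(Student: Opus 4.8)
The plan is to reduce Assumption~\ref{ass:regularityFctl} to two separate tasks: establishing the $w$-Lipschitz property of $H^*$, and verifying the Fourier-analytic conditions (i) and (ii) for each truncation $H_T^*$. For the first task, I would exploit the contractivity hypothesis on $F$. Writing ${\bf x}_t({\bf z})$ and ${\bf x}_t({\bf z}')$ for the reservoir states driven by two inputs ${\bf z},{\bf z}' \in (D_d)^{\Z_-}$, a standard telescoping argument using (i) and (ii) gives $\|{\bf x}_0({\bf z}) - {\bf x}_0({\bf z}')\| \le L_F \sum_{i=0}^\infty r^{i} \|{\bf z}_{-i} - {\bf z}'_{-i}\|$ (the factor $r^i$ recording how strongly the past input $i$ steps ago influences the current state). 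Applying the $L_h$-Lipschitz readout $h$ on $\overline{B_\rho}$ then yields \eqref{eq:ZFMPProperty} with $w_{-i} = r^{i}$ and $L = L_h L_F$; since $r \in (0,1)$, the summability requirement $\sum_{j\in\Z_-}|j| w_j = \sum_{i \ge 0} i\, r^{i} < \infty$ holds automatically.

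For the second task, fix $T \in \N$ and set $q = d(T+1)$. The restriction $H_T^* \colon (D_d)^{T+1} \to \R$ is a finite composition of $F$ (applied $T+1$ times) and $h$, hence—by hypothesis (iii)—it is infinitely differentiable on a neighborhood of the compact set $(D_d)^{T+1} \cap \overline{B_\rho}^{T+1}$. The obstacle is that $H_T^*$ is only defined (and smooth) on this compact set, whereas conditions (i)-(ii) concern a function on all of $\R^q$ with an integrable Fourier transform and $L^2$-weighted decay. I would handle this by multiplying by a smooth cutoff: choose $\chi \in C_c^\infty(\R^q)$ equal to $1$ on $(D_d)^{T+1}$ and supported in a slightly larger set on which $H_T^*$ extends smoothly, and replace $H_T^*$ by $\widetilde{H}_T^* := \chi \cdot (\text{smooth extension of } H_T^*)$. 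Then $\widetilde{H}_T^* \in C_c^\infty(\R^q)$, it agrees with $H_T^*$ on $(D_d)^{T+1}$, and—crucially—compactly supported smooth functions have Schwartz-class (in particular integrable) Fourier transforms with rapid decay. Concretely, $\widetilde{H}_T^* \in L^1(\R^q)$, its Fourier transform $\widehat{\widetilde{H}_T^*}$ is integrable, and $\widetilde{H}_T^* \in W^{2,2}(\R^q)$ (indeed in every Sobolev space), so by the cited \cite[Theorem~6.1]{Folland1995} the function $g_T := (2\pi)^{-q}\widehat{\widetilde{H}_T^*}$ satisfies both the representation $H_T^*({\bf u}) = \int_{\R^q} e^{i\langle {\bf w},{\bf u}\rangle} g_T({\bf w})\,\d{\bf w}$ for ${\bf u} \in (D_d)^{T+1}$ and the moment bound \eqref{eq:BarronCondRd3}; in fact $\int_{\R^q}\max(1,\|{\bf w}\|^k)|g_T({\bf w})|^2\,\d{\bf w} < \infty$ for all $k$, which is far more than \eqref{eq:BarronCondRd3} asks.

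I expect the main obstacle to be purely bookkeeping: one must be careful that the smooth extension of $H_T^*$ past $(D_d)^{T+1}$ actually exists, which requires knowing that $(D_d)^{T+1}$ is contained in the (open) domain where the iterated composition of $F$ stays inside the region on which $F$ and $h$ are declared smooth. Here the hypothesis $F(\overline{B_\rho}\times D_d) \subset \overline{B_\rho}$ together with the fact that $F,h$ are defined and infinitely differentiable on the full spaces $\R^{N^*}\times\R^d$ and $\R^{N^*}$ (not merely on the closed ball) removes this difficulty: the composition is globally smooth, and the cutoff $\chi$ is then a cosmetic device to make it compactly supported. Everything else—the telescoping Lipschitz estimate and the invocation of the Paley–Wiener-type fact that $C_c^\infty$ functions sit inside every relevant Barron-type class—is routine.
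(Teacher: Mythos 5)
Your proposal is correct and follows essentially the same route as the paper's proof: the same iteration of the contraction and Lipschitz estimates to obtain $w$-Lipschitz continuity with $w_{-i}=r^i$ and $L=L_hL_F$, and the same device of extending $H_T^*$ globally via the smooth composition formula, multiplying by a compactly supported smooth cutoff equal to $1$ on $(D_d)^{T+1}$, and invoking that the Fourier transform of a Schwartz function is Schwartz to obtain the representation and the moment bound.
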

\begin{proof}
Firstly, (iii) and the mean value theorem imply that $\left. h \right|_{\overline{B_\rho}} \colon \overline{B_\rho} \to \R$ is Lipschitz continuous. In what follows we denote by $L_h$ the best Lipschitz constant of $\left. h \right|_{\overline{B_\rho}}$.
Secondly, note that Proposition~\ref{prop:contractionEchoState} guarantees that $H_{\left. F \right|_{\overline{B_\rho} \times D_d}}$ is indeed well-defined. For notational simplicity write $H_F = H_{\left. F \right|_{\overline{B_\rho} \times D_d}}$. Then for any ${\bf u}, {\bf v} \in (D_d)^{\Z_-}$
\[\begin{aligned}
 & \|H_F({\bf u})-  H_F({\bf v}) \| \\ &  = \|F(H_F({\bf u}_{\cdot-1}),{\bf u}_0) - F(H_F({\bf v}_{\cdot-1}),{\bf v}_0) \| 
\\ & \leq \|F(H_F({\bf u}_{\cdot-1}),{\bf u}_0) - F(H_F({\bf v}_{\cdot-1}),{\bf u}_0) \| + \|F(H_F({\bf v}_{\cdot-1}),{\bf u}_0) - F(H_F({\bf v}_{\cdot-1}),{\bf v}_0) \|  \\ & \leq r \|H_F({\bf u}_{\cdot-1}) - H_F({\bf v}_{\cdot-1})\| + L_F \|{\bf u}_{0}-{\bf v}_{0}\|,
\end{aligned} \]
where we used the echo state property in the first step, then the triangle inequality and finally hypotheses (i)-(ii).
Iterating this estimate we obtain 
\[
|H^*({\bf u})-H^*({\bf v}) | \leq L_h L_F \sum_{k=0}^\infty r^{k} \|{\bf u}_{-k}-{\bf v}_{-k}\| = L \|{\bf u}-{\bf v}\|_{1,w}
\]
for $L = L_h L_F$ and $w_{-j} = r^j$, $j \in \N$. This proves that $H^*$ is $w$-Lipschitz continuous.

Let $T \in \N^+$. By the echo state property we can write $H^*_T$ as 
\begin{equation}\label{eq:auxEq89}
H^*_T({\bf z}_0,\ldots,{\bf z}_{-T}) = h \circ F(\cdot,{\bf z}_0) \circ \ldots \circ F(H^*(\ldots,0,0),{\bf z}_{-T})
\end{equation}
for $({\bf z}_0,\ldots,{\bf z}_{-T}) \in (D_d)^{T+1}$. The expression on the right hand side of \eqref{eq:auxEq89} can be used to extend $H^*_T$ to $(\R^d)^{T+1} = \R^q$
and hypothesis~(iii) implies that $H^*_T$ is infinitely often differentiable. Let $\chi \colon \R \to \R$ be a compactly supported $C^\infty$ function that satisfies $\chi(x)=1$ for $x \in [-M^2,M^2]$. Define $G \colon (\R^d)^{T+1} \to \R$ by 
\[G({\bf u}_0,\ldots,{\bf u}_{T}) = H^*_T({\bf u}_0,\ldots,{\bf u}_{T}) \chi(\|{\bf u}_0\|^2) \cdots \chi(\|{\bf u}_T\|^2).\]
Then for $({\bf z}_0,\ldots,{\bf z}_{-T}) \in (D_d)^{T+1}$ one has $\|{\bf z}_{-i}\|\leq M$ and thus $\chi(\|{\bf z}_{-i}\|^2) = 1$ for $i=0,\ldots,T$. Consequently, $G = H^*_T$ on $(D_d)^{T+1}$. Therefore, the claim will follow if we prove that $G$ can be represented as
\begin{equation}\label{eq:auxEq90} G({\bf u}) = \int_{\R^q} e^{i \langle {\bf w}, {\bf u} \rangle } g_T({\bf w}) \d {\bf w} \end{equation}
for some  $g_T \in L^1(\R^q)$ satisfying \eqref{eq:BarronCondRd3} and for all ${\bf u} \in \R^q$. However, $G$ is a smooth function with compact support and therefore a Schwartz function. Thus, its Fourier transform $\hat{G}({\bf w}) = \int_{\R^q} e^{-i \langle {\bf w}, {\bf u} \rangle } G({\bf u}) \d {\bf u} $ is also a Schwartz function. The Fourier inversion theorem thus yields \eqref{eq:auxEq90} with $g_T = (2 \pi)^{-q} \hat{G}$ and the integrability conditions $g_T \in L^1(\R^q)$ and \eqref{eq:BarronCondRd3} hold because $g_T$ is a Schwartz function.\qed
\end{proof}

\subsection{Approximation based on Linear Reservoir Systems with Random Neural Network Readouts}\label{sec:dynLin}
In this section we study approximations of the unknown functional  $H^*$ based on reservoir functionals  $H^{RC}$ determined by (random) linear reservoir systems with  \textit{random} neural network readouts. More precisely, for $q, N \in \N^+$ let ${\bf S} \in \M_q$, ${\bf c} \in \M_{q,d}$  and let  ${\bf A}$ and $\bm{\zeta}$ be $\M_{N,q}$ and $\M_{N,1}$-valued random matrices and vectors, respectively. For any readout matrix ${\bf W} \in \M_{1,N}$ consider the reservoir system given by
\begin{equation}\label{eq:RCRandomNNReadout}
\left\{
\begin{aligned}
\mathbf{X}_t & = {\bf S} \mathbf{X}_{t-1} + {\bf c} \mathbf{Z}_t, \quad t \in \Z_-, \\ 
Y_t & = {\bf W} \bm{\sigma}({\bf A} \mathbf{X}_t + \bm{\zeta}), \quad t \in \Z_-.
\end{aligned}
\right.
\end{equation}
Clearly, when the associated system with deterministic inputs  ${\bf z} \in (D_d)^{\Z_-}$ (which is a linear system with random neural network readout, see \eqref{eq:rNNDef}) given by
\begin{align}\label{eq:LinearSystem}
\mathbf{x}_t & = {\bf S} \mathbf{x}_{t-1} + {\bf c} {\bf z}_t, \quad t \in \Z_-, \\ \label{eq:NNreadout}
y_t & = H_{\bf W}^{{\bf A}, \bm{\zeta}}(\mathbf{x}_t), \quad t \in \Z_-,
\end{align}
has the echo state property, then the solution to \eqref{eq:RCRandomNNReadout} can be obtained by evaluating the filter associated to \eqref{eq:LinearSystem}-\eqref{eq:NNreadout} at the stochastic input ${\bf Z}$. 

\begin{remark}
For notational simplicity we take ${\bf S}, {\bf c}$ deterministic here. However, Proposition~\ref{prop:linearreservoir} directly extends to randomly drawn ${\bf S}, {\bf c}$ satisfying $\P$-a.s. the hypotheses of Proposition~\ref{prop:linearreservoir}. The expectation in \eqref{eq:MSENN5} is then conditional on ${\bf S}, {\bf c}$. 
\end{remark}

\begin{proposition}\label{prop:linearreservoir} Let $N,T \in \N^+$, $R,M_T>0$ and $q=d(T+1)$.
Suppose the rows of ${\bf A}$ are sampled from the uniform distribution on $B_R \subset \R^{q}$ and the entries of $\bm{\zeta}$ are uniformly distributed on $[-\max(M_T R,1),\max(M_T R,1)]$, let $\sigma \colon \R \to \R$ be given as $\sigma(x)=\max(x,0)$, assume that 
\eqref{eq:LinearSystem} satisfies the echo state property, the matrix  
\[ {\bf K} =   \begin{pmatrix} {\bf c} & {\bf S} {\bf c} & \cdots & {\bf S}^T {\bf c} \end{pmatrix}  \]
is invertible, $\|\mathbf{X}_0\| \leq M_T$ and  ${\bf K}^{-1} \mathbf{X}_0 \in (D_d)^{T+1}$.
Then for any $H^* \colon (D_d)^{\Z_-} \to \R$ satisfying Assumption~\ref{ass:regularityFctl}
there exists ${\bf W}$ (a  $\M_{1,N}$-valued random variable) such that 
 \begin{equation} \label{eq:MSENN5} \begin{aligned}
 \E[|Y_0 - H^*({\bf Z})|^2]^{1/2}   & \leq \frac{\sqrt{C_{T,R}}}{\sqrt{N}} + |\det({\bf K})| \int_{\R^q \setminus B_R} |g_T({\bf K}^{\top} {\bf w})| \d {\bf w} + L M  \left(\sum_{i=T+1}^\infty w_{-i} \right) \\ & \quad + L \left(\sum_{i=0}^T w_{-i}^2 \right)^{1/2}   \|{\bf K}^{-1} {\bf S}^{T+1} \mathbf{X}_{-T-1} \| \end{aligned}
 \end{equation}
 where $C_{T,R}$ is given in \eqref{eq:CTRdef}.
 
\end{proposition}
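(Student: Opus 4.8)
The plan is to exploit the finite memory of the linear reservoir \eqref{eq:LinearSystem}. Unfolding the recursion $T+1$ steps gives the exact identity $\mathbf{X}_0 = \mathbf{K}\widetilde{\mathbf{Z}} + \mathbf{S}^{T+1}\mathbf{X}_{-T-1}$, where $\widetilde{\mathbf{Z}} := (\mathbf{Z}_0,\ldots,\mathbf{Z}_{-T}) \in \R^q$, because the $j$-th block of $\mathbf{K}$ is $\mathbf{S}^{j}\mathbf{c}$. Since $\mathbf{K}$ is invertible, $\mathbf{K}^{-1}\mathbf{X}_0$ coincides with the truncated input $\widetilde{\mathbf{Z}}$ up to the tail term $\mathbf{K}^{-1}\mathbf{S}^{T+1}\mathbf{X}_{-T-1}$. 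Thus it suffices to approximate the finite-dimensional function $H_T^*\circ\mathbf{K}^{-1}$ by a random ReLU network evaluated at $\mathbf{X}_0$ and then to control two memory-truncation errors by the $w$-Lipschitz property \eqref{eq:ZFMPProperty}; the first step is done by invoking the static result Corollary~\ref{cor:RandomNNApprox}, and the whole bound is assembled by a triangle inequality.

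Concretely, I would set $\widetilde{g}(\mathbf{w}) := |\det\mathbf{K}|\, g_T(\mathbf{K}^{\top}\mathbf{w})$ and define $G\colon \R^q \to \R$ directly by $G(\mathbf{x}) = \int_{\R^q} e^{i\langle\mathbf{w},\mathbf{x}\rangle}\widetilde{g}(\mathbf{w})\,\d\mathbf{w}$, which is well defined on all of $\R^q$ once one checks $\widetilde{g}\in L^1(\R^q)$ (immediate from $g_T\in L^1$ and the change of variables $\mathbf{v} = \mathbf{K}^{\top}\mathbf{w}$). The same substitution shows $G(\mathbf{x}) = \int_{\R^q}e^{i\langle\mathbf{v},\mathbf{K}^{-1}\mathbf{x}\rangle}g_T(\mathbf{v})\,\d\mathbf{v}$, which by Assumption~\ref{ass:regularityFctl}(i) equals $H_T^*(\mathbf{K}^{-1}\mathbf{x})$ whenever $\mathbf{K}^{-1}\mathbf{x}\in(D_d)^{T+1}$; since $\mathbf{K}^{-1}\mathbf{X}_0\in(D_d)^{T+1}$ by hypothesis, $G(\mathbf{X}_0) = H_T^*(\mathbf{K}^{-1}\mathbf{X}_0)$ $\P$-a.s. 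Moreover, by the substitution and Assumption~\ref{ass:regularityFctl}(ii),
\[ \int_{\R^q}\max(1,\|\mathbf{w}\|^{3})|\widetilde{g}(\mathbf{w})|^2\,\d\mathbf{w} \le |\det\mathbf{K}|\max(1,\|\mathbf{K}^{-1}\|^{3})\int_{\R^q}\max(1,\|\mathbf{w}\|^{3})|g_T(\mathbf{w})|^2\,\d\mathbf{w} < \infty, \]
so $G$ satisfies the hypotheses \eqref{eq:FourierRepresentation}--\eqref{eq:BarronCondRd2} of Corollary~\ref{cor:RandomNNApprox} with input dimension $q$ and with $M_T$ in the role of $M$.

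Since $\mathbf{X}_0$ is a measurable function of the input process $\mathbf{Z}$ it is independent of $({\bf A}_i,\zeta_i)_{i}$, and $\|\mathbf{X}_0\|\le M_T$ by hypothesis; hence Corollary~\ref{cor:RandomNNApprox}, applied to $G$ with the input variable $\mathbf{X}_0$, yields ${\bf W}\in\M_{1,N}$ with $\E[|H_{\bf W}^{{\bf A},\bm{\zeta}}(\mathbf{X}_0) - G(\mathbf{X}_0)|^2]^{1/2} \le \sqrt{C_{T,R}/N} + \int_{\R^q\setminus B_R}|\widetilde{g}(\mathbf{w})|\,\d\mathbf{w}$, where $C_{T,R}$ is \eqref{eq:CstardefCor} specialised to $\widetilde{g}$ and $M_T$ (this gives \eqref{eq:CTRdef}) and, undoing the substitution, $\int_{\R^q\setminus B_R}|\widetilde{g}(\mathbf{w})|\,\d\mathbf{w} = |\det\mathbf{K}|\int_{\R^q\setminus B_R}|g_T(\mathbf{K}^{\top}\mathbf{w})|\,\d\mathbf{w}$. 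Using $Y_0 = H_{\bf W}^{{\bf A},\bm{\zeta}}(\mathbf{X}_0)$ and $G(\mathbf{X}_0) = H_T^*(\mathbf{K}^{-1}\mathbf{X}_0)$, the triangle inequality in $L^2(\P)$ bounds $\E[|Y_0 - H^*(\mathbf{Z})|^2]^{1/2}$ by the sum of $\E[|H_{\bf W}^{{\bf A},\bm{\zeta}}(\mathbf{X}_0) - G(\mathbf{X}_0)|^2]^{1/2}$, $\E[|H_T^*(\mathbf{K}^{-1}\mathbf{X}_0) - H_T^*(\widetilde{\mathbf{Z}})|^2]^{1/2}$ and $\E[|H_T^*(\widetilde{\mathbf{Z}}) - H^*(\mathbf{Z})|^2]^{1/2}$. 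For the third summand, the $w$-Lipschitz bound \eqref{eq:ZFMPProperty} together with $\|\mathbf{Z}_{-i}\|\le M$ (recall $D_d\subset B_M$) gives $LM\sum_{i=T+1}^{\infty}w_{-i}$. For the second, both $\mathbf{K}^{-1}\mathbf{X}_0$ and $\widetilde{\mathbf{Z}}$ lie in $(D_d)^{T+1}$ with difference $\mathbf{K}^{-1}\mathbf{S}^{T+1}\mathbf{X}_{-T-1}$, so \eqref{eq:ZFMPProperty} (restricted to length-$(T+1)$ sequences) and the Cauchy--Schwarz inequality give $L\big(\sum_{i=0}^{T}w_{-i}^2\big)^{1/2}\|\mathbf{K}^{-1}\mathbf{S}^{T+1}\mathbf{X}_{-T-1}\|$. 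Adding the three estimates produces \eqref{eq:MSENN5}.

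I expect the only delicate point to be the bookkeeping around the change of variables $\mathbf{w}\mapsto\mathbf{K}^{\top}\mathbf{w}$: one defines $G$ through its Fourier integral so that its representation is valid on all of $\R^q$ (so Corollary~\ref{cor:RandomNNApprox} applies without needing $\mathbf{K}^{-1}(B_{M_T})\subset(D_d)^{T+1}$), while identifying $G(\mathbf{X}_0)$ with $H_T^*(\mathbf{K}^{-1}\mathbf{X}_0)$ only on the almost-sure event $\{\mathbf{K}^{-1}\mathbf{X}_0\in(D_d)^{T+1}\}$, and one has to propagate the factors $|\det\mathbf{K}|$ and $\|\mathbf{K}^{-1}\|$ into the explicit constant $C_{T,R}$. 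Everything else is the routine triangle-inequality assembly and a direct application of the $w$-Lipschitz property.
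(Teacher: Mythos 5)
Your proposal is correct and follows essentially the same route as the paper's proof: unfolding the linear recursion to write $\mathbf{X}_0 = \mathbf{K}(\mathbf{Z}_0,\ldots,\mathbf{Z}_{-T})^\top + \mathbf{S}^{T+1}\mathbf{X}_{-T-1}$, defining $G$ via the change of variables $\mathbf{w}\mapsto\mathbf{K}^{\top}\mathbf{w}$ in the Fourier representation so that Corollary~\ref{cor:RandomNNApprox} applies to $G(\mathbf{X}_0)=H^*_T(\mathbf{K}^{-1}\mathbf{X}_0)$, and controlling the two truncation terms by the $w$-Lipschitz property before assembling everything with the triangle inequality. The bookkeeping of $|\det\mathbf{K}|$ in the constant and in the tail integral matches \eqref{eq:CTRdef} exactly.
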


\begin{remark}  The bound in Proposition~\ref{prop:linearreservoir} shows, in particular, that for suitable choices of ${\bf S}$ (for instance as given in Remark~\ref{rmk:shift} below) the approximation error can be made arbitrarily small. Indeed, if $\varepsilon > 0$ is given, $T$ is large enough and ${\bf S}^{T+1} = 0$, then  the last term in \eqref{eq:MSENN5} vanishes and the third term satisfies $ L M  \sum_{i=T+1}^\infty w_{-i} < \frac{\varepsilon}{3}$, since the weighting sequence $w$ is summable.  Next, one chooses $R >0$ to make $|\det({\bf K})| \int_{\R^q \setminus B_R} |g_T({\bf K}^{\top} {\bf w})| \d {\bf w} < \frac{\varepsilon}{3}$ (this is possible, since $g$ is integrable) and finally (with $R, T$ now fixed) $N$ so that 
$\frac{\sqrt{C_{T,R}}}{\sqrt{N}} < \frac{\varepsilon}{3}$.
Altogether, one obtains that
 \[ \E[|Y_0 - H^*({\bf Z})|^2]^{1/2} < \varepsilon. \] 

\end{remark}

\begin{proof}
Firstly, the hypothesis that $H^*$ is $w$-Lipschitz continuous  yields (see \eqref{eq:ZFMPProperty}) for any ${\bf z} \in (D_d)^{\Z_-}$ 
\begin{equation}\label{eq:auxEq80}
|H^*_T({\bf z}_0,\ldots,{\bf z}_{-T}) - H^*({\bf z})| \leq L \left(\sum_{i=T+1}^\infty w_{-i} \|{\bf z}_{-i}\|\right) \leq L M  \left(\sum_{i=T+1}^\infty w_{-i} \right).
\end{equation}
Secondly, using once more the $w$-Lipschitz property \eqref{eq:ZFMPProperty} and H\"older's inequality show for any ${\bf u}= ({\bf u}_t)_{t=0,\ldots,T}, {\bf v}= ({\bf v}_t)_{t=0,\ldots,T} \in (D_d)^{T+1}$ that
\[ |H^*_T({\bf u}) - H^*_T({\bf v})| \leq L \left(\sum_{i=0}^T w_{-i} \|{\bf u}_{i} -{\bf v}_i \|\right) \leq L \left(\sum_{i=0}^T w_{-i}^2 \right)^{1/2}  \left(\sum_{i=0}^T \|{\bf u}_{i} -{\bf v}_i \|^2 \right)^{1/2} \]
and therefore 
\begin{equation}\label{eq:auxEq83}
|H^*_T({\bf Z}_0,\ldots,{\bf Z}_{-T}) - H^*_T({\bf K}^{-1} \mathbf{X}_0)| \leq L \left(\sum_{i=0}^T w_{-i}^2 \right)^{1/2}  \|({\bf Z}_{0},\ldots,{\bf Z}_{-T}) - {\bf K}^{-1} \mathbf{X}_0 \|.
\end{equation}
Iterating \eqref{eq:LinearSystem} yields the representation
\[ \mathbf{X}_0 = \sum_{i=0}^T {\bf S}^i {\bf c} {\bf Z}_{-i} + {\bf S}^{T+1} \mathbf{X}_{-T-1} = {\bf K} \begin{pmatrix}
{\bf Z}_0 \\ \vdots \\ {\bf Z}_{-T}
\end{pmatrix} +  {\bf S}^{T+1} \mathbf{X}_{-T-1}, \]
which we insert in \eqref{eq:auxEq83} to obtain 
\begin{equation}\label{eq:auxEq84} \begin{aligned}
|H^*_T({\bf Z}_0,\ldots,{\bf Z}_{-T}) - H^*_T({\bf K}^{-1} \mathbf{X}_0)| & \leq L \left(\sum_{i=0}^T w_{-i}^2 \right)^{1/2}  \|{\bf K}^{-1}  {\bf S}^{T+1} \mathbf{X}_{-T-1} \|.
\end{aligned} \end{equation}
Thirdly, consider the function $G \colon B_{M_T} \to \R$ defined for ${\bf v} \in B_{M_T} \subset \R^q$ by 
\[G({\bf v}) = |\det({\bf K})| \int_{\R^q} e^{i \langle {\bf w}, {\bf v} \rangle } g_T({\bf K}^{\top} {\bf w}) \d {\bf w},\]
which is indeed well-defined because $g_T$ is integrable.
Then the change of variables formula and  Assumption~\ref{ass:regularityFctl}  yield 
\[ \begin{aligned}
H^*_T({\bf K}^{-1} \mathbf{X}_0) &  = \int_{\R^q} e^{i \langle {\bf K}^{-\top} {\bf w},  \mathbf{X}_0 \rangle } g_T({\bf w}) \d {\bf w} \\ & = |\det({\bf K})| \int_{\R^q} e^{i \langle {\bf w},  \mathbf{X}_0 \rangle } g_T({\bf K}^{\top} {\bf w}) \d {\bf w} = G( \mathbf{X}_0).
\end{aligned} \]
Therefore, the function $G$ satisfies the hypotheses of Corollary~\ref{cor:RandomNNApprox} (integrability again follows by the change of variables formula) and so by Corollary~\ref{cor:RandomNNApprox} there exists ${\bf W}$ (a  $\M_{1,N}$-valued random variable) such that 
 \begin{equation}\begin{aligned} \label{eq:MSENN3}  \E[|H_{\bf W}^{{\bf A},\bm{\zeta}}(\mathbf{X}_0) - [H^*_T \circ {\bf K}^{-1}](\mathbf{X}_0)|^2]^{1/2}   \leq \frac{\sqrt{C_{T,R}}}{\sqrt{N}} + |\det({\bf K})| \int_{\R^q \setminus B_R} |g_T({\bf K}^{\top} {\bf w})| \d {\bf w}, \end{aligned} \end{equation}
 where 
 \begin{equation}\label{eq:CTRdef} \begin{aligned}
 C_{T,R} & = 16\max(M_T R,1) \mathrm{Vol}_q(B_R) (M_T+1)^2 ([M_T]^3+M_T+2)
 \\ & |\det({\bf K})|^2 \int_{B_R} \max(1,\|{\bf w}\|^3) |g_T({\bf K}^{\top} {\bf w})|^2 \d {\bf w}. \end{aligned}
 \end{equation}
By using the triangle inequality and inserting the bounds obtained in \eqref{eq:auxEq80}, \eqref{eq:auxEq84} and \eqref{eq:MSENN3} one thus obtains the approximation bound \eqref{eq:MSENN5}, as claimed.
 \qed
\end{proof}

\begin{remark}\label{rmk:shift}
An important special case is 
\begin{equation} \label{eq:shiftMatrix}
{\bf S}= \rho \left(
\begin{array}{cc}
 \boldsymbol{0}_{d,dT}&\boldsymbol{0}_{d,d}\\
  \boldsymbol{I}_{dT}&\boldsymbol{0}_{d,d}
\end{array}
\right) \quad \mbox{and} \quad
{\bf c}= \left(
\begin{array}{c}
\boldsymbol{I}_{d}\\
\boldsymbol{0}_{dT,d}\\
\end{array}
\right)
\end{equation}
for $\rho \in (0,1]$.
In this case one calculates ${\bf S}^{T+1}=0$ and for $k=1,\ldots,T$
\[ {\bf S}^k {\bf c} = \rho^k \left(
\begin{array}{c}
\boldsymbol{0}_{dk,d}\\  \boldsymbol{I}_{d} \\
\boldsymbol{0}_{d(T-k),d}\\
\end{array}
\right).
\]
Thus, e.g. for $\rho=1$ one obtains ${\bf K} = \boldsymbol{I}_{d(T+1)}$ and so in particular ${\bf K}$ is invertible and $\|{\bf K}^{-1}\|=1$. 
In addition, the system \eqref{eq:LinearSystem} satisfies the echo state property and the solution is given by ${\bf x} _t= \left({\bf z} _t^{\top}, \rho {\bf z}_{t-1}^{\top},\ldots , \rho^T {\bf z}_{t-T}^{\top}\right)^\top$, $t \in \mathbb{Z}_{-} $. 
\end{remark}

\subsection{Approximation based on Echo State Networks}\label{sec:dynESN}

In this section we use an echo state network with randomly generated parameters as an approximation to the unknown target functional $H^*$. More precisely, for $\bar{N} \in \N^+$ let ${\bf A}$, ${\bf C}$ and  $\bm{\zeta}$ be $\M_{\bar{N}}$, $\M_{\bar{N},d}$ and $\M_{\bar{N},1}$-valued random matrices/vectors, respectively, and for any readout matrix ${\bf W} \in \M_{1,\bar{N}}$ consider the reservoir system given by
\begin{equation*} 
\left\{
\begin{aligned}
 \mathbf{x}_t &  = \bm{\sigma}( {\bf A} \mathbf{x}_{t-1} + {\bf C} {\bf z}_t + \bm{\zeta}), \quad t \in \Z_-,\\
y_t & = {\bf W} \mathbf{x}_t, \quad t \in \Z_-
\end{aligned}
\right.
\end{equation*} 
for ${\bf z} \in (D_d)^{\Z_-}$. Such a system is called an echo state network. If this RC system has the echo state property (see Section~\ref{sec:Preliminaries}), then the reservoir functional $H^{{\bf A},{\bf C},\bm{\zeta}}_{\bf W}({\bf z})= y_0$ (that is, the input-to-solution map $(D_d)^{\Z_-} \ni {\bf z} \mapsto y_0$) is well-defined and measurable. Evaluating $H^{{\bf A},{\bf C},\bm{\zeta}}_{\bf W}$ at the stochastic input signal ${\bf Z}$ then amounts to solving the associated system with stochastic input
\begin{equation} \label{eq:ESNViewpoint2}
\left\{
\begin{aligned}
 \mathbf{X}_t &  = \bm{\sigma}( {\bf A} \mathbf{X}_{t-1} + {\bf C} {\bf Z}_t + \bm{\zeta}), \quad t \in \Z_-,\\
Y_t & = {\bf W} \mathbf{X}_t, \quad t \in \Z_-.
\end{aligned}
\right.
\end{equation} 

The next result shows that it is possible to generate ${\bf A}$, ${\bf C}$ and  $\bm{\zeta}$ from a generic distribution (not depending on $H^*$) and use this generic echo state network to approximate $H^*$ arbitrarily well. Thus, ${\bf X}$ is universal and to approximate $H^*$  only the readout matrix ${\bf W} \in \M_{1,\bar{N}}$ needs to be trained, a task which amounts to a linear regression. 

\begin{theorem} \label{thm:ESNErrorBound}
 Let $\sigma \colon \R \to \R$ be given as $\sigma(x)=\max(x,0)$. Let $T, N \in \N^+$, $R>0$, assume that  $\|({\bf Z}_0,\ldots,{\bf Z}_{-T})\|_{\R^{d(T+1)}}\leq M_T$ and generate ${\bf A}, {\bf C}, \bm{\zeta}$ according to the following procedure:
\begin{itemize}
\item[(i)] draw $N$ i.i.d. samples ${\bf A}_1,\ldots,{\bf A}_N$ from the uniform distribution on $B_R \subset \R^{d(T+1)}$ and $N$ i.i.d. samples $\zeta_1,\ldots,\zeta_N$ (also independent of $\{{\bf A}_i\}_{i=1,\ldots,N}$) from the  uniform distribution on $[-\max(M_T R,1),\max(M_T R,1)]$ ,
\item[(ii)] let ${\bf S}$, ${\bf c}$ be the shift matrices defined in \eqref{eq:shiftMatrix} with $\rho=1$ and set 
\begin{equation*} \begin{aligned} {\bf a} = \begin{pmatrix}
{\bf A}_1^\top \\ \vdots \\ {\bf A}_N^\top \end{pmatrix},   \bar{{\bf A}} & = \begin{pmatrix} {\bf S} & \boldsymbol{0}_{q,N} \\ {\bf a} {\bf S} & \boldsymbol{0}_{N,N} \end{pmatrix}, \bar{{\bf C}}  = \begin{pmatrix} {\bf c}  \\ {\bf a} {\bf c} \end{pmatrix}, \bar{\bm{\zeta}} = \begin{pmatrix} \boldsymbol{0}_{q} \\ \zeta_1 \\ \vdots \\ \zeta_N \end{pmatrix}, \\  {\bf A} & = \begin{pmatrix} \bar{{\bf A}} & -\bar{{\bf A}} \\ -\bar{{\bf A}} & \bar{{\bf A}} \end{pmatrix}, {\bf C}  = \begin{pmatrix}  \bar{{\bf C}}  \\ - \bar{{\bf C}} \end{pmatrix}, \bm{\zeta} = \begin{pmatrix} \bar{\bm{\zeta}} \\ -\bar{\bm{\zeta}} \end{pmatrix}. \end{aligned} \end{equation*}
\end{itemize}
Then for any $H^* \colon (D_d)^{\Z_-} \to \R$ satisfying Assumption~\ref{ass:regularityFctl} 
there exists a readout ${\bf W}$ (a  $\M_{1,2(N+d(T+1))}$-valued random variable) such that the system \eqref{eq:ESNViewpoint2} satisfies the echo state property and
 \begin{equation} \label{eq:MSENN6} \begin{aligned}
 \E[|Y_0 - H^*({\bf Z})|^2]^{1/2}   & \leq \frac{\sqrt{C_{T,R}}}{\sqrt{N}} +  \int_{\R^q \setminus B_R} |g_T({\bf u})| \d {\bf u} + L M  \left(\sum_{i=T+1}^\infty w_{-i} \right)  \end{aligned}
 \end{equation}
 with 
   \begin{equation}\label{eq:CTRdef1} \begin{aligned}
   C_{T,R} & = 16\max(M_T R,1) \mathrm{Vol}_q(B_R) (M_T+1)^2 ([M_T]^3+M_T+2) \\ & \qquad \cdot \int_{B_R} \max(1,\|{\bf u}\|^3) |g_T({\bf u})|^2 \d {\bf u}. \end{aligned}
   \end{equation}
\end{theorem}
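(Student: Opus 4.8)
The plan is to reduce the statement to Proposition~\ref{prop:linearreservoir}: I will show that the engineered matrices $({\bf A},{\bf C},\bm{\zeta})$ make the echo state network \eqref{eq:ESNViewpoint2}, evaluated at time $0$, reproduce the output of a linear shift reservoir with a random ReLU readout, to which Proposition~\ref{prop:linearreservoir} applies with the shift matrices \eqref{eq:shiftMatrix} (so that ${\bf K}=\boldsymbol{I}$ and ${\bf S}^{T+1}=\boldsymbol{0}$).

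First I would exploit the block structure. Write the $2(N+q)$-dimensional state as $\mathbf{X}_t=(\mathbf{X}_t^+,\mathbf{X}_t^-)$, $\mathbf{X}_t^\pm\in\R^{N+q}$. The $\pm$-pattern of $({\bf A},{\bf C},\bm{\zeta})$ gives ${\bf A}\mathbf{X}_{t-1}+{\bf C}{\bf Z}_t+\bm{\zeta}=(\mathbf{v}_t,-\mathbf{v}_t)$ with $\mathbf{v}_t=\bar{{\bf A}}(\mathbf{X}_{t-1}^+-\mathbf{X}_{t-1}^-)+\bar{{\bf C}}{\bf Z}_t+\bar{\bm{\zeta}}$, so $\mathbf{X}_t^+=\bm{\sigma}(\mathbf{v}_t)$ and $\mathbf{X}_t^-=\bm{\sigma}(-\mathbf{v}_t)$. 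Since $\sigma(x)-\sigma(-x)=x$, the difference $\mathbf{D}_t:=\mathbf{X}_t^+-\mathbf{X}_t^-$ obeys the \emph{linear} recursion $\mathbf{D}_t=\bar{{\bf A}}\mathbf{D}_{t-1}+\bar{{\bf C}}{\bf Z}_t+\bar{\bm{\zeta}}$. Splitting $\mathbf{D}_t=(\mathbf{D}_t^{(1)},\mathbf{D}_t^{(2)})$ with $\mathbf{D}_t^{(1)}\in\R^q$, the definitions of $\bar{{\bf A}},\bar{{\bf C}},\bar{\bm{\zeta}}$ give $\mathbf{D}_t^{(1)}={\bf S}\mathbf{D}_{t-1}^{(1)}+{\bf c}{\bf Z}_t$ and $\mathbf{D}_t^{(2)}={\bf a}\mathbf{D}_t^{(1)}+(\zeta_1,\ldots,\zeta_N)^\top$. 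Hence $\mathbf{D}^{(1)}$ follows the linear shift recursion \eqref{eq:LinearSystem} with the matrices \eqref{eq:shiftMatrix} and $\rho=1$, so (Remark~\ref{rmk:shift}) $\mathbf{D}_0^{(1)}=({\bf Z}_0^\top,\ldots,{\bf Z}_{-T}^\top)^\top$, and, as $\mathbf{v}_0=\mathbf{D}_0$, the last $N$ entries of $\mathbf{X}_0^+=\bm{\sigma}(\mathbf{D}_0)$ equal $\sigma(\langle{\bf A}_i,({\bf Z}_0,\ldots,{\bf Z}_{-T})\rangle+\zeta_i)$, $i=1,\ldots,N$. Choosing ${\bf W}\in\M_{1,2(N+q)}$ that vanishes except on these coordinates, where it equals ${\bf W}'=(W_1,\ldots,W_N)$, we get $Y_0=\sum_{i=1}^N W_i\,\sigma(\langle{\bf A}_i,({\bf Z}_0,\ldots,{\bf Z}_{-T})\rangle+\zeta_i)$, i.e.\ exactly the time-$0$ output $y_0$ of the linear reservoir system \eqref{eq:LinearSystem}--\eqref{eq:NNreadout} with ReLU readout weights ${\bf a}$ (rows ${\bf A}_i^\top$) and ${\bf W}'$.

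Next I would record that \eqref{eq:ESNViewpoint2} (equivalently \eqref{eq:ESN}) has the echo state property, which does not follow from Proposition~\ref{prop:contractionEchoState} (the ReLU map is no contraction) but directly from nilpotency: for any solution, the $\mathbf{D}_t$ above satisfies $\mathbf{D}_t=\sum_{k=0}^{T}\bar{{\bf A}}^{k}(\bar{{\bf C}}{\bf Z}_{t-k}+\bar{\bm{\zeta}})$, since the powers $\bar{{\bf A}}^{k}$ have a vanishing second block column and a first block column built only from ${\bf S}^{k}$, and ${\bf S}^{T+1}=\boldsymbol{0}$ (Remark~\ref{rmk:shift}); thus $\mathbf{D}_t$, and then $\mathbf{X}_t^\pm=\bm{\sigma}(\pm(\bar{{\bf A}}\mathbf{D}_{t-1}+\bar{{\bf C}}{\bf Z}_t+\bar{\bm{\zeta}}))$, are uniquely determined, and boundedness is immediate from $\|{\bf Z}_t\|\le M$.

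Finally I would apply Proposition~\ref{prop:linearreservoir} with ${\bf S},{\bf c}$ as in \eqref{eq:shiftMatrix} ($\rho=1$), so that the echo state property of \eqref{eq:LinearSystem} holds, ${\bf K}=\boldsymbol{I}_{d(T+1)}$ is invertible, $\mathbf{X}_0=({\bf Z}_0,\ldots,{\bf Z}_{-T})$ has $\|\mathbf{X}_0\|\le M_T$ and ${\bf K}^{-1}\mathbf{X}_0\in(D_d)^{T+1}$. Since $H^*$ satisfies Assumption~\ref{ass:regularityFctl}, Proposition~\ref{prop:linearreservoir} yields readout weights ${\bf W}'$ for which the right-hand side of \eqref{eq:MSENN5} holds; using $|\det({\bf K})|=1$, $g_T({\bf K}^\top{\bf w})=g_T({\bf w})$ and ${\bf S}^{T+1}\mathbf{X}_{-T-1}=\boldsymbol{0}$ this collapses precisely to \eqref{eq:MSENN6} with $C_{T,R}$ as in \eqref{eq:CTRdef1}. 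Transporting ${\bf W}'$ back to the ${\bf W}\in\M_{1,2(N+q)}$ constructed above finishes the proof. The main obstacle is the bookkeeping of the $\pm$/shift block decomposition and, above all, establishing the echo state property for a ReLU echo state network — which here is forced by the nilpotency ${\bf S}^{T+1}=\boldsymbol{0}$ engineered into $\bar{{\bf A}}$ rather than by contractivity; once $Y_0$ is identified with the linear-reservoir-plus-random-readout output, the estimate is Proposition~\ref{prop:linearreservoir}.
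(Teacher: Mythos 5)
Your proposal is correct and follows essentially the same route as the paper's proof: the same doubling trick exploiting $\sigma(x)-\sigma(-x)=x$ to show that the difference of the two state halves obeys the linear shift recursion, the same identification of $Y_0$ with the output of the linear reservoir with random ReLU readout, the same uniqueness argument for the echo state property (the paper reduces it to the ESP of the linear system \eqref{eq:LinearSystem}, which for the $\rho=1$ shift matrix is exactly the nilpotency ${\bf S}^{T+1}=\boldsymbol{0}$ you invoke), and the same final application of Proposition~\ref{prop:linearreservoir} with ${\bf K}=\boldsymbol{I}$. No gaps.
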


{
\begin{remark} \label{rmk:genericX}
The process ${\bf X}$ in \eqref{eq:ESNViewpoint2} is not related in any way to the unknown functional $H^*$. ${\bf X}$ is generic and can be viewed as a ``reservoir'' that efficiently stores the information about the history of the input process 
${\bf Z}$. Theorem~\ref{thm:ESNErrorBound} shows that for any ``sufficiently regular'' functional $H^*$ one can approximate $H^*({\bf Z})$ by ${\bf W} {\bf X}_0$ for an appropriately chosen ${\bf W}$, i.e.\ by applying a linear mapping to ${\bf X}_0$. This phenomenon is analogous to the situation encountered in continuous-time stochastic processes satisfying certain stochastic differential equations, which can be approximated by applying a linear functional to the signature of the driving path, see e.g.\ \cite[Chapter~5]{Kloeden1992}, \cite[Chapter~18]{Friz2010}.  See also \cite{JLpaper} and \cite{RC13}.
\end{remark}
}

\begin{remark}\label{rmk:Scmatrices}
For simplicity (and to give a fully constructive sampling procedure) we have chosen here for ${\bf S}$, ${\bf c}$ the shift matrices defined in \eqref{eq:shiftMatrix} with $\rho=1$. However, Theorem~\ref{thm:ESNErrorBound} can be directly generalized to $\rho \in (0,1)$ and arbitrary  ${\bf S}$, ${\bf c}$ satisfying the hypotheses stated in Proposition~\ref{prop:linearreservoir}. The bound \eqref{eq:MSENN6} is then replaced by the bound \eqref{eq:MSENN5} and the constant $C_{T,R}$ given in \eqref{eq:CTRdef1} is replaced by \eqref{eq:CTRdef}.
\end{remark}

\begin{remark} By using Markov's inequality the bound \eqref{eq:MSENN6} immediately yields a high-probability bound on the approximation error conditional on the reservoir parameters: for any $\delta \in (0,1)$ it holds with probability $1-\delta$ that the (random) echo state network $H^{{\bf A},{\bf C},\bm{\zeta}}_{\bf W}$ satisfies
\[
\left(\int_{(D_d)^{\Z_-}} |H^{{\bf A},{\bf C},\bm{\zeta}}_{\bf W}({\bf z}) - H^*({\bf z})|^2 \mu_{{\bf Z}}(\d {\bf z}) \right)^{1/2} \leq \frac{\phi(T,R,N)}{\delta},
\]
where $\phi(T,R,N)$ is the right hand side in \eqref{eq:MSENN6}.
\end{remark}

\begin{proof}
Firstly, Proposition~\ref{prop:linearreservoir} and Remark~\ref{rmk:shift} show that for any $H^*$ satisfying Assumption~\ref{ass:regularityFctl} there exists ${\bf w}$ (a  $\M_{1,N}$-valued random variable) such that the bound \eqref{eq:MSENN6} holds with $Y_0=Y_0^{\text{Lin}}$ satisfying
\begin{equation*}
\left\{
\begin{aligned}
\mathbf{X}_t^{\text{Lin}} & = {\bf S} \mathbf{X}_{t-1}^{\text{Lin}} + {\bf c} \mathbf{Z}_t, \quad t \in \Z_-, \\ 
Y_t^{\text{Lin}} & = {\bf w} \bm{\sigma}({\bf a} \mathbf{X}_t^{\text{Lin}} + {\bf b}), \quad t \in \Z_-,
\end{aligned}
\right.
\end{equation*}
and ${\bf b}^\top = \begin{pmatrix}
\zeta_1 & \cdots & \zeta_N \end{pmatrix}$. Now set
$\bar{{\bf W}} = \begin{pmatrix} \boldsymbol{0}_{1,q} & {\bf w} \end{pmatrix}$ and ${\bf W} = \begin{pmatrix} \bar{{\bf W}} & \boldsymbol{0}_{1,q+N} \end{pmatrix}$.
We first show that \eqref{eq:ESNViewpoint2} has a solution.  To do this we define $\bar{\mathbf{X}}_t = \begin{pmatrix} \mathbf{X}_t^{\text{Lin}} \\ {\bf a} \mathbf{X}_t^{\text{Lin}} + {\bf b} \end{pmatrix}$ and claim that $\mathbf{X}_t = \begin{pmatrix} \bm{\sigma}(\bar{\mathbf{X}}_t) \\ \bm{\sigma}(-\bar{\mathbf{X}}_t) \end{pmatrix}$ is a solution to the first equation in \eqref{eq:ESNViewpoint2}. Indeed, we first calculate
\[ \bar{{\bf A}} \bar{\mathbf{X}}_{t-1} + \bar{{\bf C}} {\bf Z}_t  +  \bar{\bm{\zeta}} = \begin{pmatrix} {\bf S} \mathbf{X}_{t-1}^{\text{Lin}} + {\bf c} {\bf Z}_t \\ {\bf a} {\bf S} \mathbf{X}_{t-1}^{\text{Lin}} + {\bf a} {\bf c} {\bf Z}_t + {\bf b} \end{pmatrix} = \begin{pmatrix} \mathbf{X}_t^{\text{Lin}} \\ {\bf a} \mathbf{X}_t^{\text{Lin}} + {\bf b}  \end{pmatrix} = \bar{\mathbf{X}}_t \]
and then insert this to obtain
\[ \begin{aligned}
\bm{\sigma}( {\bf A} \mathbf{X}_{t-1} + {\bf C} {\bf Z}_t + \bm{\zeta}) & = \bm{\sigma}( \begin{pmatrix} \bar{{\bf A}}  \\ -\bar{{\bf A}}\end{pmatrix}(\bm{\sigma}(\bar{\mathbf{X}}_{t-1})-\bm{\sigma}(-\bar{\mathbf{X}}_{t-1})) + \begin{pmatrix}  \bar{{\bf C}}  \\ - \bar{{\bf C}} \end{pmatrix} {\bf Z}_t + \begin{pmatrix} \bar{\bm{\zeta}} \\ -\bar{\bm{\zeta}} \end{pmatrix}) 
\\ & = \bm{\sigma} \begin{pmatrix}  \bar{\mathbf{X}}_{t}  \\ -\bar{\mathbf{X}}_{t} \end{pmatrix} = \mathbf{X}_t,
\end{aligned} \]
as claimed. In addition, 
\begin{equation*}
Y_t = {\bf W} \mathbf{X}_t = \bar{{\bf W}}\bm{\sigma}(\bar{\mathbf{X}}_t) = {\bf w} \bm{\sigma}({\bf a} \mathbf{X}_t^{\text{Lin}} + {\bf b}) = Y_t^{\text{Lin}}
\end{equation*}
and so we have constructed a solution to \eqref{eq:ESNViewpoint2} and proved that \eqref{eq:MSENN6} holds. It remains to be  proved that the system  \eqref{eq:ESNViewpoint2} satisfies the echo state property. To do so, consider an arbitrary solution $({\bf U},\tilde{Y})$ to \eqref{eq:ESNViewpoint2}, i.e.\ $({\bf U},\tilde{Y})$ satisfying
\begin{equation*} 
\left\{
\begin{aligned}
 \mathbf{U}_t &  = \bm{\sigma}( {\bf A} \mathbf{U}_{t-1} + {\bf C} {\bf Z}_t + \bm{\zeta}), \quad t \in \Z_-,\\
\tilde{Y}_t & = {\bf W} \mathbf{U}_t, \quad t \in \Z_-.
\end{aligned}
\right.
\end{equation*} 
Partitioning $\mathbf{U}_t = \begin{pmatrix} \mathbf{U}_t^{[1]} \\ \mathbf{U}_t^{[2]} \end{pmatrix}$ (with $\mathbf{U}_t^{[i]}$ valued in $\R^{d(T+1)+N}$) and setting 
$\bar{\mathbf{U}}_t = \mathbf{U}_t^{[1]} - \mathbf{U}_t^{[2]}$
one calculates
\begin{equation}\label{eq:auxEq88} \begin{aligned}
\mathbf{U}_t & = \bm{\sigma}( \begin{pmatrix} \bar{{\bf A}}  \\ -\bar{{\bf A}}\end{pmatrix}(\mathbf{U}_{t-1}^{[1]} - \mathbf{U}_{t-1}^{[2]}) + \begin{pmatrix}  \bar{{\bf C}}  \\ - \bar{{\bf C}} \end{pmatrix} {\bf Z}_t + \begin{pmatrix} \bar{\bm{\zeta}} \\ -\bar{\bm{\zeta}} \end{pmatrix}) \\ &  = \bm{\sigma} \begin{pmatrix} \bar{{\bf A}} \bar{\mathbf{U}}_{t-1} + \bar{{\bf C}} {\bf Z}_t + \bar{\bm{\zeta}} \\ -(\bar{{\bf A}} \bar{\mathbf{U}}_{t-1} + \bar{{\bf C}} {\bf Z}_t + \bar{\bm{\zeta}})\end{pmatrix}
\end{aligned} \end{equation}
and therefore 
\begin{equation}\label{eq:auxEq86}
\bar{\mathbf{U}}_t = \bm{\sigma}(\bar{{\bf A}} \bar{\mathbf{U}}_{t-1} + \bar{{\bf C}} {\bf Z}_t + \bar{\bm{\zeta}}) - \bm{\sigma}(-(\bar{{\bf A}} \bar{\mathbf{U}}_{t-1} + \bar{{\bf C}} {\bf Z}_t + \bar{\bm{\zeta}})) = \bar{{\bf A}} \bar{\mathbf{U}}_{t-1} + \bar{{\bf C}} {\bf Z}_t + \bar{\bm{\zeta}}.
\end{equation}
By further partitioning $\bar{\mathbf{U}}_t = \begin{pmatrix} \bar{\mathbf{U}}_t^{[1]} \\ \bar{\mathbf{U}}_t^{[2]} \end{pmatrix}$ (with $\bar{\mathbf{U}}_t^{[1]}$ valued in $\R^{d(T+1)}$ and $\bar{\mathbf{U}}_t^{[2]}$ valued in $\R^{N}$) one obtains from \eqref{eq:auxEq86} that
\begin{equation} \label{eq:auxEq87}
\begin{pmatrix} \bar{\mathbf{U}}_t^{[1]} \\ \bar{\mathbf{U}}_t^{[2]} \end{pmatrix} = \begin{pmatrix} {\bf S} \bar{\mathbf{U}}_{t-1}^{[1]} + {\bf c} {\bf Z}_t\\ {\bf a} {\bf S} \bar{\mathbf{U}}_{t-1}^{[1]} + {\bf a} {\bf c} {\bf Z}_t + {\bf b} \end{pmatrix}.
\end{equation}
However, the linear system \eqref{eq:LinearSystem} satisfies the echo state property and so $\bar{\mathbf{U}}_t^{[1]} = \mathbf{X}_t^{\text{Lin}}$. Inserting this in \eqref{eq:auxEq87} shows that   $\bar{\mathbf{U}}_t^{[2]} = {\bf a} \mathbf{X}_t^{\text{Lin}} + {\bf b}$. This proves that $\bar{\mathbf{U}}_t = \bar{\mathbf{X}}_t$. Using this in the second step and inserting \eqref{eq:auxEq86} into \eqref{eq:auxEq88} in the first step shows that 
\[ \mathbf{U}_t = 
\begin{pmatrix} \bm{\sigma}(\bar{\mathbf{U}}_t) \\ \bm{\sigma}(-\bar{\mathbf{U}}_t) \end{pmatrix} = \begin{pmatrix} \bm{\sigma}(\bar{\mathbf{X}}_t) \\ \bm{\sigma}(-\bar{\mathbf{X}}_t) \end{pmatrix} = \mathbf{X}_t
\]
and hence also $\tilde{Y}=Y$, as claimed.\qed
\end{proof}

{
\begin{remark}
As explained in Remark~\ref{rmk:genericX} the state process ${\bf X}$ can be viewed as a ``reservoir'' that stores the history of the input process ${\bf Z}$. Choosing ${\bf X}$ as an echo state network, i.e.\ evolving according to the dynamics specified in \eqref{eq:ESNViewpoint2}, is the most commonly used choice in practical applications in reservoir computing, see for instance \cite{Jaeger04}, \cite{Pathak:PRL}. From a purely  mathematical point of view it could also be interesting to look for other choices of update functions $G$ so that for ${\bf X}_t = G({\bf X}_{t-1},{\bf Z}_t)$ a similar result to Theorem 2 can be proved. However, proving such a result would require different techniques than those used in the proof of Theorem 2 (which, due to its reliance on Corollary~\ref{cor:RandomNNApprox} via Proposition~\ref{prop:linearreservoir}, is specific to the neural network choice made here) and $G$ can not be chosen arbitrarily. For instance, if we choose $\bm{\sigma}(x)=x$ in \eqref{eq:ESNViewpoint2}, then ${\bf W} {\bf X}_0$ is a linear functional of ${\bf Z}$, which can not be used to approximate the (in general non-linear) functional $H^*$.
\end{remark}
}

\begin{remark}
{Let us be more specific about how echo state networks are used in applications. In many situations, the goal is to learn an unknown input/output system from data. For example, in \cite{Jaeger04}, \cite{Pathak:PRL} the considered task is to predict the evolution of chaotic dynamical systems based on observational data. In general such problems can be phrased using a target process  ${\bf Y} = ({\bf Y}_t)_{t \in \mathbb{Z}}$ and an observation process ${\bf Z} = ({\bf Z}_t)_{t \in \mathbb{Z}}$. The goal is to predict ${\bf Y}_{t}$ based on $({\bf Z}_{s})_{s \leq t}$. For instance, the target process is ${\bf Y}_{t} = H^*(({\bf Z}_{s})_{s \leq t})$ or ${\bf Y}_t = {\bf Z}_{t+h}$ for some $h>0$ (which corresponds to learning the functional $H^*(({\bf Z}_{s})_{s \leq t}) = \E[{\bf Z}_{t+h}|({\bf Z}_{s})_{s \leq t}]$). To achieve this goal, echo state networks as introduced in \eqref{eq:ESNViewpoint2} are used. First, the parameters ${\bf A}$, ${\bf C}$ and  $\bm{\zeta}$ are generated according to some given distribution (for instance, all entries are drawn from a normal distribution). Then the readout matrix ${\bf W}$ is trained by a linear regression using past data, i.e. by solving 
\[ {\bf W}^* = \arg \min_{{\bf W}} \frac{1}{T}\sum_{k=1}^T \|{\bf W} {\bf X}_{t-k} - {\bf Y}_{t-k} \|^2   \]
and ${\bf W}^* {\bf X}_t$ is then the prediction of ${\bf Y}_t$. This is in practice repeated for different random samples ${\bf A}$, ${\bf C}$, and  $\bm{\zeta}$ and an optimization over some hyperparameters is carried out. This procedure has been successful at learning input/output systems in a wide range of applications, in the sense that echo state networks have been able to achieve a low mean squared prediction error $\|{\bf W}^* {\bf X}_t-{\bf Y}_t\|$ in comparison to other methods. In view of Remark~\ref{rmk:leastsquares},  Theorem~\ref{thm:ESNErrorBound} directly provides  error bounds for this procedure in the case  ${\bf Y}_{t} = H^*(({\bf Z}_{s})_{s \leq t})$. In the case when $ {\bf Y}_t$ is a general random vector not necessarily measurable with respect to the sigma-algebra generated by $({\bf Z}_{s})_{s \leq t}$ (for instance, if ${\bf Y}_t = {\bf Z}_{t+h}$) then the \emph{approximation error bounds} in Theorem~\ref{thm:ESNErrorBound} can be combined with the \emph{generalization error bounds} in \cite{RC10} to obtain an error analysis for echo state network-based learning also in this case. 
}
\end{remark}

In order to use the bound in Theorem~\ref{thm:ESNErrorBound} in practice one can now  prescribe an approximation accuracy $\varepsilon >0$ and subsequently select the hyperparameters $R$, $T$, $N$ so that the right hand side of \eqref{eq:MSENN6} is smaller than $\varepsilon$. {The next result provides a special case of Theorem~\ref{thm:ESNErrorBound} when $H_T^*$ is in the Sobolev space $W^{k,2}(\R^q)$.}
{
	\begin{corollary}
Let $\sigma \colon \R \to \R$ be given as $\sigma(x)=\max(x,0)$ and let $w \in (0,\infty)^{\Z_-}$ with $\sum_{j \in \Z_-} |j|w_j < \infty$. Let $T, N \in \N^+$,  let $q=d(T+1)$, let $k\in \N$ with $k \geq \frac{q}{2}+1 + \varepsilon$ for some $\varepsilon>0$ and let $R=N^{1/(2k-2\varepsilon+1)}$. Assume that  $\|({\bf Z}_0,\ldots,{\bf Z}_{-T})\|_{\R^{d(T+1)}}\leq M_T$ and generate ${\bf A}, {\bf C}, \bm{\zeta}$ according to the procedure described in (i)-(ii) in Theorem~\ref{thm:ESNErrorBound}. 
Then for any $H^* \colon (D_d)^{\Z_-} \to \R$ that is  $w$-Lipschitz continuous with Lipschitz constant $L$ and satisfies $H^*_T \in W^{k,2}(\R^q) \cap L^1(\R^q)$ 
there exists a readout ${\bf W}$ (a  $\M_{1,2(N+d(T+1))}$-valued random variable) such that the system \eqref{eq:ESNViewpoint2} satisfies the echo state property and
\begin{equation*} 
\begin{aligned}
\E[|Y_0 - H^*({\bf Z})|^2]^{1/2}   & \leq C \| H^*_T \|_{k} N^{-1/\alpha}  + L M  \left(\sum_{i=T+1}^\infty w_{-i} \right)  \end{aligned}
\end{equation*}
with $\alpha = 2+\frac{(q+1)}{k-q/2-\varepsilon}$ and  
\begin{equation*}\begin{aligned}
C & = [16\max(M_T,1) \mathrm{Vol}_q(B_1) (M_T+1)^2 ([M_T]^3+M_T+2)]^{1/2} (2 \pi)^{-q} \\ & \qquad + \left(\int_{\R^q } (1+\|{\bf w} \|^2)^{-q/2-\varepsilon}  \d {\bf w} \right)^{1/2}. \end{aligned}
\end{equation*}
\end{corollary}
\begin{proof}
The corollary is a consequence of Theorem~\ref{thm:ESNErrorBound} and Corollary~\ref{cor:sobolev}. More specifically, to deduce the desired result from Theorem~\ref{thm:ESNErrorBound} it suffices to prove that the first two error terms in \eqref{eq:MSENN6} are bounded by $C \| H^*_T \|_{k} N^{-1/\alpha}$. To this end, note that these error terms arise when applying Corollary~\ref{cor:RandomNNApprox} in \eqref{eq:MSENN3}. Our hypotheses allow us to apply Corollary~\ref{cor:sobolev} instead of Corollary~\ref{cor:RandomNNApprox}, which directly yields the desired expression for the upper bound and the constant. 
\qed
\end{proof}
}

We now provide an example in which, for each $N$, good choices of the hyperparameters $T$ and $R$ can be given explicitly as a function of $N$ and thus also the bound  \eqref{eq:MSENN6} depends only on $N$. 

\begin{example} 
Let $d=1$, $D_d=[-M,M]$, $\lambda \in (0,1)$ and consider the functional $H^*(z)=\exp(-\frac{1}{2}\sum_{i=0}^\infty \lambda^i (z_{-i})^2 )$.  Then $H^*$ satisfies the hypotheses of Theorem~\ref{thm:ESNErrorBound} and we may choose $R$, $T$ appropriately to obtain for any $N\in \N^+$ 
\[\E[|Y_0 - H^*({Z})|^2]^{1/2}  \leq \frac{p(N)}{N^\gamma} \] 
for some slowly growing function $p$ (a power of logarithms of $N$) and some $\gamma > 0$. We carefully prove this in the next Lemma.
\end{example}

\begin{lemma} Let $\beta>\alpha >0$ satisfy $1>\frac{\alpha}{2}(1-\log(2)+\log(\beta/\alpha))$. Then for any $N \in \N^+$ the ESN approximation constructed in Theorem~\ref{thm:ESNErrorBound} with
$T+1 = \alpha \log(\sqrt{N})$, $R=\beta \log(\sqrt{N})$, satisfies 
\[\E[|Y_0 - H^*({Z})|^2]^{1/2}  \leq \frac{p(N)}{N^\gamma} \] 
with $p \colon (0,\infty) \to \R$ and $\gamma>0$ given in \eqref{eq:auxEq95} and \eqref{eq:auxEq96}, respectively.
\end{lemma}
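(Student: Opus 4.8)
The plan is to feed $H^*$ into Theorem~\ref{thm:ESNErrorBound} and then optimise the three-term bound \eqref{eq:MSENN6} over the explicit choice $T+1=\alpha\log\sqrt N$, $R=\beta\log\sqrt N$. First I would check that $H^*$ meets Assumption~\ref{ass:regularityFctl}: for the $w$-Lipschitz property, use that $t\mapsto e^{-t/2}$ has derivative bounded by $\tfrac12$ on $[0,\infty)$ and that $|z_{-i}^2-z_{-i}'^2|\le 2M|z_{-i}-z_{-i}'|$ on $[-M,M]$, which gives \eqref{eq:ZFMPProperty} with $w_{-i}=\lambda^i$, $L=M$ and $\sum_j|j|w_j=\sum_{i\ge0}i\lambda^i<\infty$. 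For parts (i)--(ii), note that the restricted functional $H^*_T({\bf z}_0,\dots,{\bf z}_{-T})=\exp(-\tfrac12\sum_{i=0}^T\lambda^i z_{-i}^2)$ extends to a Schwartz function on $\R^{q}$, $q=T+1$, whose Fourier transform $g_T$ is again Schwartz (explicitly a product of one-dimensional Gaussians, $g_T(w)=(2\pi)^{-q/2}\lambda^{-T(T+1)/4}\exp(-\tfrac12\sum_{i}\lambda^{-i}w_i^2)$), so $g_T\in L^1(\R^q)$ and \eqref{eq:BarronCondRd3} holds. One also records $M_T=M\sqrt{T+1}$ since $\|({\bf z}_0,\dots,{\bf z}_{-T})\|\le M\sqrt{T+1}$ on $[-M,M]^{T+1}$.

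\textbf{The three terms.} Applying Theorem~\ref{thm:ESNErrorBound}, the bound \eqref{eq:MSENN6} splits as a memory term, a Fourier-tail term, and a statistical term. The memory term $LM\sum_{i\ge T+1}w_{-i}=\tfrac{M^2}{1-\lambda}\lambda^{T+1}$ becomes, after $T+1=\alpha\log\sqrt N$, a pure negative power $\tfrac{M^2}{1-\lambda}N^{-\frac{\alpha}{2}\log(1/\lambda)}$. The Fourier-tail term $\int_{\R^q\setminus B_R}|g_T(u)|\,du$ equals $\P(\|X\|>R)$ for $X\sim N(0,\operatorname{diag}(1,\lambda,\dots,\lambda^T))$ (as $\int g_T=1$); since $\E\|X\|^2=\sum_{i=0}^T\lambda^i\le\tfrac1{1-\lambda}$, a Chernoff estimate on $\sum_i\lambda^iZ_i^2$ — with $\prod_i(1-\tfrac{\lambda^i}{2})^{-1/2}$ bounded uniformly in $T$ — gives $\le C_\lambda e^{-R^2/4}$, which with $R=\beta\log\sqrt N$ decays faster than any power of $N$. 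The statistical term $\sqrt{C_{T,R}/N}$ is the substantial one: I would bound $\int_{B_R}\max(1,\|u\|^3)|g_T(u)|^2\,du$ by its full-space value, use $\phi_\Sigma^2=(4\pi)^{-q/2}|\Sigma|^{-1/2}\phi_{\Sigma/2}$ together with a uniform bound on the third moment of $N(0,\Sigma/2)$ to get $\le(1+\kappa_\lambda)(4\pi)^{-q/2}\lambda^{-T(T+1)/4}$, then combine with $\operatorname{Vol}_q(B_R)=\pi^{q/2}R^q/\Gamma(q/2+1)$, the Stirling bound $\Gamma(q/2+1)\ge(q/2e)^{q/2}$, and the polylogarithmic $M_T$-prefactors, to obtain $C_{T,R}\le\operatorname{polylog}(N)\cdot\big(\tfrac{eR^2}{2q}\big)^{q/2}\lambda^{-T(T+1)/4}$.

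\textbf{Main obstacle.} The crux is controlling $C_{T,R}$: the volume $\operatorname{Vol}_q(B_R)$ grows, while $(4\pi)^{-q/2}/\Gamma(q/2+1)$ decays super-exponentially, and there is the $\lambda$-dependent factor $\lambda^{-T(T+1)/4}$ coming from $H^*_T$ being a very wide Gaussian in its late coordinates; these must be balanced against the memory decay rate $\tfrac\alpha2\log(1/\lambda)$ so that one $\gamma>0$ works for all three terms. Choosing $T$ and $R$ both proportional to $\log\sqrt N$ is exactly what makes the $R/q$-type ratio entering $C_{T,R}$ collapse to the constant $\beta/\alpha$, and the hypothesis $1>\tfrac\alpha2(1-\log2+\log(\beta/\alpha))$ — equivalently $\tfrac\alpha2\log\tfrac{e\beta}{2\alpha}<1$ — is precisely the requirement that the exponent of $N$ collected from $C_{T,R}$ stay below $1$, so that $\sqrt{C_{T,R}/N}$ has strictly negative exponent. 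I would then take $\gamma$ to be the minimum of the memory exponent and the statistical exponent, absorb the slowly-varying factors (powers of $\log N$ from $\operatorname{Vol}_q$, the $M_T$-polynomials, and Stirling corrections) into $p$, and read off $p$ and $\gamma$ as \eqref{eq:auxEq95} and \eqref{eq:auxEq96}. The one genuinely delicate point is tracking $\lambda^{-T(T+1)/4}$ against $\lambda^{T+1}$ carefully enough to see that the constraint on $(\alpha,\beta)$ indeed leaves a positive margin.
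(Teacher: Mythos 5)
Your setup — the $w$-Lipschitz check with $w_{-i}=\lambda^i$, the identification of $g_T$ as the density of $\mathcal{N}(0,\Sigma)$ with $\Sigma=\mathrm{diag}(1,\lambda,\ldots,\lambda^T)$, and the three-term decomposition of \eqref{eq:MSENN6} — matches the paper, but your treatment of the statistical term has a gap that your route cannot close. You apply Theorem~\ref{thm:ESNErrorBound} literally, i.e.\ with the unweighted shift ($\rho=1$, ${\bf K}=\boldsymbol{I}$), so the constant \eqref{eq:CTRdef1} contains $\int_{B_R}\max(1,\|{\bf u}\|^3)|g_T({\bf u})|^2\d{\bf u}$ with the anisotropic Gaussian $g_T$ itself. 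As you compute, this is of order $(4\pi)^{-q/2}|\det\Sigma|^{-1/2}=(4\pi)^{-q/2}\lambda^{-T(T+1)/4}$, and this is not an artefact of bounding by the full-space integral: $g_T$ peaks at height proportional to $\lambda^{-T(T+1)/4}$ and its mass sits well inside $B_R$, so the restricted integral is genuinely that large. With $T+1=\alpha\log\sqrt{N}$ the factor $\lambda^{-T(T+1)/4}=\exp\bigl(\tfrac14\log(1/\lambda)\,T(T+1)\bigr)$ grows like $\exp(c(\log N)^2)$, faster than any power of $N$, so $\sqrt{C_{T,R}/N}\to\infty$ and the bound is vacuous. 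This cannot be ``balanced against the memory decay rate'' as you propose in your closing paragraph: the memory term $\lambda^{T+1}$ is a separate additive summand supplying only polynomial decay, which cannot absorb a super-polynomial factor inside a different summand.

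The missing idea is the one the paper takes via Remark~\ref{rmk:Scmatrices}: choose $\rho=\sqrt{\lambda}$ in the shift matrix so that ${\bf K}=\Sigma^{1/2}$, and use the bound \eqref{eq:MSENN5} with the constant \eqref{eq:CTRdef} from Proposition~\ref{prop:linearreservoir} rather than \eqref{eq:MSENN6}/\eqref{eq:CTRdef1}. Then $|\det{\bf K}|^2\,|g_T({\bf K}^{\top}{\bf w})|^2=(2\pi)^{-q}e^{-\|{\bf w}\|^2}$: the change of variables whitens the Gaussian and cancels $|\det\Sigma|^{-1}$ exactly, leaving only $(4\pi)^{-q/2}$, which combines with $\mathrm{Vol}_q(B_R)$ to give the ratio $[eR^2/(2(T+1))]^{(T+1)/2}$ that the hypothesis $1>\frac{\alpha}{2}(1-\log 2+\log(\beta/\alpha))$ is designed to control; the same substitution turns the Fourier-tail term into a standard chi-square tail. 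A secondary point: the exponents in \eqref{eq:auxEq95} only come out if the parameter choice is read as $R^2=\beta\log\sqrt{N}$ — the quantity that must stabilise is $R^2/(T+1)$, not $R/(T+1)$ — so your claim that the ratio ``collapses to the constant $\beta/\alpha$'' needs that reading; with $R=\beta\log\sqrt{N}$ taken literally the relevant ratio still grows logarithmically. Your bounds for the memory term and for the Gaussian tail are otherwise fine.
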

\begin{proof} Firstly, using that $f_e \colon [0,\infty) \to [0,\infty)$, $f_e(x)=\exp(-x/2)$ is $1/2$-Lipschitz, one estimates
\[
|H^*(u)-H^*(v)| \leq \frac{1}{2} |\sum_{i=0}^\infty \lambda^i [(u_{-i})^2 -(v_{-i})^2] | \leq M \sum_{i=0}^\infty \lambda^i |(u_{-i}) -(v_{-i}) |
\]
and so $H^*$ is $w$-Lipschitz for $w=(\lambda^k)_{k \in \N}$. 
Secondly, let $\Sigma = \mathrm{diag}(1,\lambda,\ldots,\lambda^T)$. Noting that $H^*_T$ is the characteristic function of a $\mathcal{N}(0,\Sigma)$-distributed random variable one has for any ${\bf u} =(z_0,\ldots,z_{-T})$  
\[ H^*_T({\bf u}) = \exp\left(-\frac{1}{2}\sum_{i=0}^T \lambda^i (z_{-i})^2 \right) = \int_{\R^{T+1}} e^{i \langle {\bf w}, {\bf u} \rangle } g_T({\bf w}) \d {\bf w}
\]
where $g_T$ is the density of a $\mathcal{N}(0,\Sigma)$-distribution. In particular, $g_T$ is integrable and \eqref{eq:BarronCondRd3} is satisfied.
Choosing $\rho=\sqrt{\lambda}$ in the shift matrix \eqref{eq:shiftMatrix} we note that ${\bf K} = \Sigma^{1/2}$ is invertible. By Theorem~\ref{thm:ESNErrorBound} and Remark~\ref{rmk:Scmatrices} it follows that the approximation bound
\eqref{eq:MSENN5} holds with $C_{T,R}$ given in \eqref{eq:CTRdef}. The last term in the bound \eqref{eq:MSENN5} is $0$, since ${\bf S}^{T+1} = 0$. For our choice $T+1=\alpha \log(\sqrt{N})$ the second to last term in the bound \eqref{eq:MSENN5} equals 
\begin{equation}\label{eq:auxEq93} LM\sum_{i=T+1}^\infty w_{-i} = \lambda^{T+1} LM/(1-\lambda) = \frac{1}{\sqrt{N}^{\alpha \log(1/\lambda)}}LM/(1-\lambda).\end{equation}
 Denoting by ${\bf V}$ a $\mathcal{N}(0,\boldsymbol{I}_{T+1})$-distributed random variable, the second term in the right hand side of \eqref{eq:MSENN5} can be written as
\begin{equation*}
 |\det({\bf K})| \int_{\R^{T+1} \setminus B_R} |g_T({\bf K}^{\top} {\bf w})| \d {\bf w} = (2\pi)^{-(T+1)/2} \int_{\R^{T+1} \setminus B_R} e^{-\frac{\|{\bf w}\|^2}{2}} \d {\bf w} = \P(\|{\bf V}\|>R).
\end{equation*}
Recall that $\|{\bf V}\|^2$ has a chi-square distribution with $T+1$ degrees of freedom. Using this and the fact that $R^2>T+1$ (because $\beta>\alpha$) one estimates 
\begin{equation}\label{eq:auxEq92} \begin{aligned} \P(\|{\bf V}\|>R) & \leq \P(\|{\bf V}\|^2 >R^2) \leq \left(\frac{R^2}{T+1} e^{1-R^2/(T+1)}\right)^{(T+1)/2} \\ & = \frac{1}{\sqrt{N}^{\beta/2 - \alpha/2 - \alpha \log(\beta/\alpha)/2}}. \end{aligned}  \end{equation}
Finally, one calculates
\begin{equation} \label{eq:auxEq102} \begin{aligned}
|\det({\bf K})|^2 \int_{B_R} \max(1,\|{\bf w}\|^3) |g_T({\bf K}^{\top} {\bf w})|^2 \d {\bf w} & \leq R^3 (2\pi)^{-(T+1)} \int_{\R^{T+1}} e^{-\|{\bf w}\|^2} \d {\bf w} \\ & = R^3 (2\pi)^{-(T+1)/2} 2^{-(T+1)/2}.
\end{aligned} \end{equation}
Recall the following standard estimate for the volume of the ball $B_R \subset \R^q$:
\begin{equation}\label{eq:VolUnitBall} \mathrm{Vol}_{q}(B_R) \leq \frac{1}{\sqrt{q\pi}} \left[ \frac{2 \pi e}{q}\right]^{q/2}R^q. \end{equation}
Inserting \eqref{eq:auxEq102}, $M_T \leq \sqrt{(T+1)}M$ and \eqref{eq:VolUnitBall} in \eqref{eq:CTRdef} yields (for $M_T > 1$, $R>1$)
 \begin{equation}\label{eq:auxEq94} \begin{aligned}
 C_{T,R} & \leq \frac{2^8}{\pi} M^7 (T+1)^{3} R^4 \left[\frac{ e R^2}{2(T+1)}\right]^{(T+1)/2}.  \end{aligned}
 \end{equation}
We may now put together all the terms that we estimated separately: inserting \eqref{eq:auxEq93}, \eqref{eq:auxEq92} and \eqref{eq:auxEq94} in the approximation bound \eqref{eq:MSENN5} yields
\[\E[|Y_0 - H^*({Z})|^2]^{1/2}  \leq \frac{p(N)}{\sqrt{N}}, \] 
where
\begin{equation}\label{eq:auxEq95} \gamma = \frac{1}{2} \min\{\alpha \log(\lambda^{-1}),\frac{\beta}{2} - \frac{\alpha}{2}(1+ \log(\beta/\alpha)),1-\frac{\alpha}{2}(1-\log(2)+\log(\beta/\alpha))\}, \end{equation}
\begin{equation}\label{eq:auxEq96}
p(N) = \frac{2^8}{\pi} M^7 \alpha^3 \beta^4 (\log(\sqrt{N}))^{7} +1+\frac{LM}{1-\lambda}.
\end{equation}
Note that the second term in \eqref{eq:auxEq95} is positive, since $1+\log(x)\leq x$ for $x > 0$ and since $\alpha < \beta$. The last term in \eqref{eq:auxEq95} is positive by assumption on $\alpha, \beta$ and so indeed $\gamma>0$.\qed
\end{proof}

\subsection{Universal approximation by echo state networks}
\label{sec:dynUniversality}

As a corollary of the echo state network approximation error bounds in Theorem~\ref{thm:ESNErrorBound}, we also obtain a constructive ESN universality result, see Corollary~\ref{cor:universality} below. This complements the ESN universality result in \cite[Theorem~III.10]{RC8}. The key novelty of Corollary~\ref{cor:universality} is that a constructive approximation procedure (up to tuning the hyperparameters $N,T,R$ and carrying out a regression to estimate ${\bf W}$) is given, whereas \cite[Theorem~III.10]{RC8} is an existence result. Note also that the setting is slightly different (the activation function here is ReLU and the inputs are uniformly bounded). 

\begin{corollary}\label{cor:universality} Let $H^* \colon (D_d)^{\Z_-} \to \R$ measurable satisfy that $\mathbb{E}[|H^*(\mathbf{Z})|^2] < \infty$ and let $\sigma \colon \R \to \R$ be given as $\sigma(x)=\max(x,0)$. Then for any $\varepsilon > 0$, $\delta \in (0,1)$ there exists $N, T \in \mathbb{N}^+$, $R>0$ and a readout ${\bf W}$ {(a  $\M_{1,2(N+d(T+1))}$-valued random variable)} such that the system \eqref{eq:ESNViewpoint2} (with ${\bf A}, {\bf C}, \bm{\zeta}$ generated according to (i)-(ii) in Theorem~\ref{thm:ESNErrorBound} for $M_T = M\sqrt{T}$) satisfies the echo state property and (denoting by $H_{\bf W}^{{\bf A},{\bf C},\bm{\zeta}}$ the associated random ESN functional) the approximation error satisfies with probability $1-\delta$ that
\begin{equation*}\begin{aligned} \left(\int_{(\R^d)^{\Z_-}} |H_{\bf W}^{{\bf A},{\bf C},\bm{\zeta}}({\bf z}) - H^*({\bf z})|^2 \mu_{{\bf Z}}(\d {\bf z}) \right)^{1/2} & = \E[ |H_{\bf W}^{{\bf A},{\bf C},\bm{\zeta}}({\bf Z})   - H^*({\bf Z})|^2|{\bf A}, {\bf C}, \bm{\zeta}]^{\frac{1}{2}} \\ & < \varepsilon. \end{aligned} \end{equation*} 
\end{corollary}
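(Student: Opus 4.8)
The plan is to deduce this universality statement from Theorem~\ref{thm:ESNErrorBound} by replacing the arbitrary measurable $L^2$-functional $H^*$ by a functional $\tilde H$ satisfying Assumption~\ref{ass:regularityFctl}, applying Theorem~\ref{thm:ESNErrorBound} to $\tilde H$, and then combining the two errors via the triangle inequality and Markov's inequality. Since the readout ${\bf W}$ produced by Theorem~\ref{thm:ESNErrorBound} is an explicit measurable function of $({\bf A}_i,\zeta_i)_i$, hence of $({\bf A},{\bf C},\bm{\zeta})$, and ${\bf Z}$ is independent of the reservoir parameters, the conditional $L^2$-error in the statement equals $\E[|H_{\bf W}^{{\bf A},{\bf C},\bm{\zeta}}({\bf Z})-H^*({\bf Z})|^2\mid{\bf A},{\bf C},\bm{\zeta}]=\int|H_{\bf W}^{{\bf A},{\bf C},\bm{\zeta}}({\bf z})-H^*({\bf z})|^2\mu_{{\bf Z}}(\d{\bf z})$, whose expectation is $\E[|Y_0-H^*({\bf Z})|^2]$. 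By Markov's inequality it therefore suffices to produce $N,T\in\N$, $R>0$ and ${\bf W}$ with $\E[|Y_0-H^*({\bf Z})|^2]^{1/2}<\varepsilon\sqrt{\delta}$.

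I would first perform a finite-memory reduction. The coordinate $\sigma$-algebras $\mathcal{G}_n=\sigma({\bf Z}_0,\ldots,{\bf Z}_{-n})$ increase to the Borel $\sigma$-algebra of $(D_d)^{\Z_-}$, so $\E[H^*({\bf Z})\mid\mathcal{G}_n]\to H^*({\bf Z})$ in $L^2(\P)$; hence there are $T_0\in\N$ and a Borel map $G_0\colon(D_d)^{T_0+1}\to\R$ with $\E[|H^*({\bf Z})-G_0({\bf Z}_0,\ldots,{\bf Z}_{-T_0})|^2]^{1/2}<\varepsilon\sqrt{\delta}/3$. Extending $G_0$ by $0$ to $\R^{d(T_0+1)}$ and using that $C_c^\infty(\R^{d(T_0+1)})$ is dense in $L^2$ of any finite Borel measure (a standard density argument, cf.\ the proof of the universality corollary in Section~\ref{sec:staticUniversal}), I would then pick $\tilde G\in C_c^\infty(\R^{d(T_0+1)})$ with $\E[|G_0({\bf Z}_0,\ldots,{\bf Z}_{-T_0})-\tilde G({\bf Z}_0,\ldots,{\bf Z}_{-T_0})|^2]^{1/2}<\varepsilon\sqrt{\delta}/3$.

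The key step is to turn $\tilde G$ into a genuine regular functional. Fixing $w_{-i}=2^{-i}$ and cutoffs $\psi_i\in C_c^\infty([0,\infty))$ with $0\le\psi_i\le1$ and $\psi_i\equiv1$ on $[0,M]$, I would set $\tilde H({\bf z}):=\tilde G({\bf z}_0,\ldots,{\bf z}_{-T_0})\prod_{i>T_0}\psi_i(\|{\bf z}_{-i}\|)$. On $(D_d)^{\Z_-}$ every factor of the product equals $1$, so $\tilde H({\bf z})=\tilde G({\bf z}_0,\ldots,{\bf z}_{-T_0})$ there; this preserves the two $L^2$-estimates above and shows that $\tilde H$ is $w$-Lipschitz with constant $L=2^{T_0}\|\nabla\tilde G\|_\infty$ (since $\tilde G$ is globally Lipschitz). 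Moreover, for every $T\in\N$ the length-$T$ restriction $\tilde H_T({\bf z}_0,\ldots,{\bf z}_{-T})=\tilde G({\bf z}_0,\ldots,{\bf z}_{-T_0})\prod_{T_0<i\le T}\psi_i(\|{\bf z}_{-i}\|)$ is a $C_c^\infty$, hence Schwartz, function on $\R^{d(T+1)}$, so $g_T:=(2\pi)^{-d(T+1)}\widehat{\tilde H_T}$ is Schwartz, lies in $L^1$, and satisfies Assumption~\ref{ass:regularityFctl}(i)--(ii). The multiplicative spatial cutoffs are essential here: without them $\tilde H_T$ would be constant in the coordinates ${\bf z}_{-T_0-1},\ldots,{\bf z}_{-T}$ and its Fourier transform would fail to be an $L^1$ function; this is also why the hyperparameter $T$ must be taken strictly larger than $T_0$ below, since the memory term $LM\sum_{i=T+1}^\infty w_{-i}=LM2^{-T}$ appearing in \eqref{eq:MSENN6} is positive and only becomes small as $T\to\infty$.

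Finally I would invoke Theorem~\ref{thm:ESNErrorBound} for $\tilde H$: choose $T$ large so that $LM2^{-T}<\varepsilon\sqrt{\delta}/9$; then, with $q=d(T+1)$ and $M_T$ as in the statement (admissible since $D_d\subset B_M$), choose $R$ large so that $\int_{\R^q\setminus B_R}|g_T(u)|\d u<\varepsilon\sqrt{\delta}/9$ (possible as $g_T\in L^1$); then choose $N$ large so that $\sqrt{C_{T,R}}/\sqrt{N}<\varepsilon\sqrt{\delta}/9$, with $C_{T,R}$ given by \eqref{eq:CTRdef1}. Theorem~\ref{thm:ESNErrorBound} then yields ${\bf W}$ such that \eqref{eq:ESNViewpoint2} has the echo state property and $\E[|Y_0-\tilde H({\bf Z})|^2]^{1/2}<\varepsilon\sqrt{\delta}/3$; since $\tilde H({\bf Z})=\tilde G({\bf Z}_0,\ldots,{\bf Z}_{-T_0})$ a.s., the triangle inequality together with the two reduction estimates gives $\E[|Y_0-H^*({\bf Z})|^2]^{1/2}<\varepsilon\sqrt{\delta}$, and Markov's inequality yields the claimed high-probability bound. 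The main obstacle is precisely the construction of $\tilde H$ in the third step: it must simultaneously be close to $H^*$ in $L^2(\mu_{{\bf Z}})$, $w$-Lipschitz, and such that \emph{all} of its finite-length restrictions are honest Barron-class functions, and reconciling the last requirement for every $T$ with the first two is what forces the cutoff construction and the choice $T>T_0$.
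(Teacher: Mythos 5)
Your proposal is correct and follows essentially the same route as the paper's proof: reduce to a bound on $\E[|Y_0-H^*({\bf Z})|^2]^{1/2}$ and apply Markov's inequality; truncate the memory via conditional expectation onto $\sigma({\bf Z}_0,\ldots,{\bf Z}_{-T_0})$; approximate the resulting finite-dimensional function by a $C_c^\infty$ function; verify Assumption~\ref{ass:regularityFctl}; and invoke Theorem~\ref{thm:ESNErrorBound} with $T$, $R$, $N$ chosen successively. The one place where you genuinely diverge is the verification that all finite-length restrictions $\tilde H_T$ admit an $L^1$ Fourier density: the paper extends the compactly supported function $H^{(2)}$ to a functional by ignoring the older coordinates and simply asserts that every $H^{(2)}_T$ is Schwartz, which is not literally true for $T$ exceeding the memory length (the restriction is then constant, hence non-decaying, in the extra coordinates). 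Your multiplicative cutoffs $\psi_i(\|{\bf z}_{-i}\|)$, equal to $1$ on $B_M$ and hence invisible on $(D_d)^{\Z_-}$ and under zero-padding, repair this cleanly while preserving the $w$-Lipschitz property and the two $L^2$ reduction estimates; your strictly positive weights $w_{-i}=2^{-i}$ are also more faithful to Definition~\ref{ass:HLipschitz} than the indicator weights used in the paper. This costs you a nonzero memory term $LM2^{-T}$ in \eqref{eq:MSENN6}, absorbed by taking $T$ large, whereas the paper's choice makes that term vanish; the rest of the argument is identical.
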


\begin{proof} Firstly, by standard properties of the conditional expectation (see for instance \cite[Lemma~A.1]{RC8}) we may find $T^* \in \N^+$ satisfying 
\begin{equation}
\label{eq:auxEq97}
\E[|H^*({\bf Z})-\E[H^*({\bf Z})|\Fc_{-T^*}]|^2]^{1/2} < \frac{\varepsilon \sqrt{\delta}}{3},
\end{equation}
where $\Fc_{-T^*} := \sigma({\bf Z}_0,\ldots,{\bf Z}_{-T^*})$.
Let $q:=d(T^*+1)$. By definition, $\E[H^*({\bf Z})|\Fc_{-T^*}]$ is $\Fc_{-T^*}$-measurable and so there exists a measurable function $H^{(1)}\colon \R^q \to \R$ such that $\E[H^*({\bf Z})|\Fc_{-T^*}] = H^{(1)}({\bf Z}_0,\ldots,{\bf Z}_{-T^*})$ and $\E[|H^{(1)}({\bf Z}_0,\ldots,{\bf Z}_{-T^*})|^2]< \infty$ (see, e.g., \cite[Lemma~1.13]{Kallenberg2002}). By combining \cite[Lemma~1.33]{Kallenberg2002} and the fact that $C_c^\infty(\R^q)$ is dense in $C_c(\R^q)$ in the supremum norm we find $H^{(2)} \in C_c^\infty(\R^q)$ satisfying \begin{equation}\label{eq:auxEq98}
\E[|H^{(1)}({\bf Z}_0,\ldots,{\bf Z}_{-T^*})-H^{(2)}({\bf Z}_0,\ldots,{\bf Z}_{-T^*})|^2]^{1/2}< \frac{\varepsilon \sqrt{\delta}}{3}.
\end{equation}
We claim that $H^{(2)}$ satisfies Assumption~\ref{ass:regularityFctl}. Indeed, $H^{(2)}$ is Lipschitz continuous on $\R^q$ and thus also $w$-Lipschitz with $w=(\mathbbm{1}_{\{t\leq T^*\}})_{t \in \N}$. In addition, for any $T \in \N^+$ one has that $H^{(2)}_T$ is a Schwartz function and so also the Fourier transform of $H^{(2)}_T$ is a Schwartz function and the Fourier inversion theorem with \eqref{eq:BarronCondRd3} indeed hold. Now set $T=T^*+1$ and choose $R$ so that the second to last term in the right hand side of \eqref{eq:MSENN5} is smaller than $\varepsilon\sqrt{\delta}/6$ and then choose $N$ such that $\frac{\sqrt{C_{T,R}}}{\sqrt{N}} < \varepsilon\sqrt{\delta}/6$.
Applying Theorem~\ref{thm:ESNErrorBound} then yields 
 \begin{equation} \label{eq:auxEq99}
 \E[|Y_0 - H^{(2)}({\bf Z}_0,\ldots,{\bf Z}_{-T^*})|^2]^{1/2}   < \frac{\varepsilon\sqrt{\delta}}{3}.
 \end{equation}
 Applying the triangle inequality and using \eqref{eq:auxEq97}, \eqref{eq:auxEq98}, \eqref{eq:auxEq99} we then obtain
  \begin{equation*} 
  \E[|Y_0 - H^*({\bf Z})|^2]^{1/2}   < \varepsilon\sqrt{\delta}.
  \end{equation*}  
Thus, Markov's inequality gives
\[ \begin{aligned}
\P & \left( \left(\int_{(\R^d)^{\Z_-}} |H_{\bf W}^{{\bf A},{\bf C},\bm{\zeta}}({\bf z}) - H^*({\bf z})|^2 \mu_{{\bf Z}}(\d {\bf z}) \right)^{1/2} > \varepsilon \right) \\ &  \leq \frac{1}{\varepsilon^2} \E[\E[|Y_0 - H^*({\bf Z})|^2|{\bf A}, {\bf C}, \bm{\zeta}]]  < \delta,
\end{aligned}\]
as claimed.\qed
\end{proof}

\section{Approximation Error Estimates For Echo State Networks with Output Feedback}\label{sec:ESNOutput}

In this section we continue our study of the dynamic situation, but we now focus on approximations based on a slightly different type of reservoir computing systems: echo state networks with output feedback, that is, systems given for  ${\bf z} \in (D_d)^{\Z_-}$ and $t \in \Z_-$ by 
\begin{equation} \begin{aligned} \label{eq:ESNrandCoeff2}
\mathbf{x}_t &  = \bm{\sigma}( {\bf A} \mathbf{y}_{t-1} + {\bf C} {\bf z}_t + \bm{\zeta}), \\
{\bf y}_t & = {\bf W} \mathbf{x}_{t}.
\end{aligned}
\end{equation} 
These systems are a popular modification of the echo state networks considered in Section~\ref{sec:ESN}. They are also referred to as Jordan recurrent neural networks (with random internal weights) and are widely used in the literature.

 The advantage of these systems is that they can be used to directly approximate the reservoir function in case the functional $H^* \colon (D_d)^{\Z_-} \to \R^m$ is itself induced by a reservoir system. More precisely, consider $H^*$ defined via $ (D_d)^{\Z_-} \ni {\bf z} \mapsto H^*({\bf z}) = {\bf y}_0^* \in \R^m$ with ${\bf y}^*$ determined by
 \begin{equation}\label{eq:Hstarsystem2}
 \left\{
 \begin{aligned}
 \mathbf{x}_t^* & = F^*(\mathbf{x}_{t-1}^*,{\bf z}_t), \\ 
 {\bf y}_t^* & = h^*(\mathbf{x}_t^*), \quad t \in \Z_-.
 \end{aligned}
 \right.
 \end{equation}
 For functionals $H^*$ of this type the system \eqref{eq:ESNrandCoeff2} can be used to directly approximate the state updating function, that is, the function $F^*$ in \eqref{eq:Hstarsystem2}.
  The disadvantage of the system \eqref{eq:ESNrandCoeff2} is that the training procedure is more involved, since the readout ${\bf W}$ is fed back into the state equation of the echo state network in \eqref{eq:ESNrandCoeff2}. 
Nevertheless, these systems are used frequently in reservoir computing applications and so we also provide a detailed approximation analysis here. 

This section is structured as follows. In Theorem~\ref{thm:approx} in Section~\ref{sec:OFResult} we present our approximation result for functionals induced by sufficiently regular reservoir systems. Remarkably, in this case only one hyperparameter $N$ appears (proportional to the number of neurons, i.e. the dimension of $\mathbf{x}$ in \eqref{eq:ESNrandCoeff2}) and the approximation error is of order  $O(1/\sqrt{N})$. Theorem~\ref{thm:approx} follows from our more general approximation result Theorem~\ref{thm:approx2} below and Proposition~\ref{prop:SmoothRepresentation}. Beforehand we introduce the setting and regularity assumptions in Section~\ref{sec:OFsetting}.

\subsection{Setting and regular reservoir functionals} \label{sec:OFsetting}
As in Section~\ref{sec:ESN} we study systems \eqref{eq:ESNrandCoeff2} in which first ${\bf A},{\bf C},\bm{\zeta}$ are generated randomly (and then considered fixed) and subsequently ${\bf W}$ is trained in order to approximate $H^*$ as well as possible. We now specify the involved objects in more detail. Firstly, note that in practice instead of the infinite history system \eqref{eq:ESNrandCoeff2} in fact one always uses a system that satisfies \eqref{eq:ESNrandCoeff2} for $t \geq -T$ and is initialized at $t=-T-1$ with ${\bf y}_{-T-1} = \Xi$ for some $T \in \N^+$ and some $\Xi \in \R^m$ satisfying $\|\Xi\|\leq M$. Thus, these are also the systems we consider here. Next, throughout this section ${\bf Z}$ is a $(D_d)^{\Z_-}$-valued random variable -- a discrete-time stochastic process -- independent of  ${\bf A},{\bf C},\bm{\zeta}$. As in the previous sections the approximation error is measured conditional on the randomly generated parameters  ${\bf A},{\bf C},\bm{\zeta}$. However, in order to provide an alternative viewpoint we formulate the approximation results in this section in terms of statistical risk. Thus, for some integrable random variable ${\bf Y}_0$ we consider the risk defined by $\mathcal{R}(H):= \mathbb{E}[L(H({\bf Z}),{\bf Y}_0)]$ for a loss function $L \colon \mathbb{R}^m \times \mathbb{R}^m \to [0,\infty)$ satisfying the Lipschitz condition  \begin{equation}\label{eq:lossLipschitz} |L(\mathbf{x},{\bf y}) - L(\overline{\mathbf{x}},\overline{{\bf y}}) | \leq L_L (\|\mathbf{x}-\overline{\mathbf{x}} \|_2 +  \|\mathbf{y}-\overline{\mathbf{y}} \|_2), \enspace \mathbf{x}, \overline{\mathbf{x}}, {\bf y}, \overline{\mathbf{y}}\in \mathbb{R}^m. \end{equation}

In order to state our approximation result for echo state networks with output feedback let us now make precise which kinds of functionals we aim to approximate. Let $N^* \in \N^+$. We consider functions $f \colon \R^{N^*}\times D_d \to \R$ whose restriction to $B_{M+1}\times D_d $ satisfies the following smoothness condition: 
\begin{definition} \label{def:smooth} A function $f \colon \R^{N^*}\times D_d \to \R$ is \textit{sufficiently smooth}, if for $(\mathbf{x}, {\bf z}) \in B_{M+1}\times D_d $ one has $f(\mathbf{x}, {\bf z}) = \int_{\R^{N^*+d}} \hat{f}({\bf w}) e^{i (\mathbf{x}, {\bf z}) \cdot {\bf w}} \d {\bf w} $ where $\hat{f} \colon \R^{N^*+d} \to \C$ is a function satisfying
\begin{equation} \label{eq:Cfdef} C_f = \left(\mathrm{Vol}_{N^*+d}(B_1)\int_{\R^{N^*+d}} \max(1,\|{\bf w}\|^{2(N^*+d+3)}) |\widehat{f}({\bf w})|^2 \d {\bf w} \right)^{1/2} < \infty.\end{equation}
\end{definition} 
\begin{remark}
For instance, if $D_d = \R^d$, $f \in L^1(\R^{N^*+d}) \cap L^2(\R^{N^*+d})$, $\hat{f}$ denotes the Fourier transform of $f$ and $\hat{f}$ is integrable, then condition \eqref{eq:Cfdef} is equivalent to the requirement that $f$ belongs to the Sobolev space $W^{N^*+d+3,2}(\R^{N^*+d})$, see e.g.\ \cite[Theorem~6.1]{Folland1995}.
\end{remark}
\begin{remark} In this section we consider the dimensions $d$ and $N^*$ as fixed. The behaviour of  \eqref{eq:Cfdef} as a function of  $N^*+d$ depends on the function $f$ (or rather the family of functions indexed by $N^*+d$) under consideration. Recalling the estimate for the volume of the unit ball \eqref{eq:VolUnitBall} one observes that the factor $\mathrm{Vol}_{N^*+d}(B_1)$ in \eqref{eq:Cfdef} decreases to $0$ exponentially as $N^*+d \to \infty$.
\end{remark}
With this definition at hand, we now state the regularity assumption imposed on the functionals under consideration. Note that we focus on approximating the state equation here and so we set $m=N^*$ and take $h^*$ the identity in \eqref{eq:Hstarsystem2}. To approximate systems with general $h^*$ one may either combine the results presented here with any static approximation technique or proceed as explained in Remark~\ref{rmk:generalReadout} below. Note that under Assumption~\ref{ass:regularityFctl2} the system \eqref{eq:Hstarsystem2} satisfies the echo state property, see Proposition~\ref{prop:contractionEchoState}. 

\begin{assumption}\label{ass:regularityFctl2}
 Suppose $H^* \colon (D_d)^{\Z_-} \to \R^m$ satisfies that  $H^*({\bf z}) = {\bf x}_0^*$, where $\mathbf{x}^*$ satisfies \eqref{eq:Hstarsystem2}
for some continuous function $F^* \colon \R^{N^*} \times D_d \to B_M \subset \R^{N^*}$ such that
\begin{itemize}
\item for each ${\bf z} \in D_d$,  $F^*(\cdot,{\bf z})$ is an $r$-contraction,
\item for each $j=1,\ldots,N^*$, $F^*_j$ is sufficiently smooth (see Definition~\ref{def:smooth}). 
\end{itemize}
We denote $C_{H^*} = \sum_{j=1}^{N^*} C_{{F^*_j}}$ (with $C_{{F^*_j}}$ as in \eqref{eq:Cfdef}). 
\end{assumption}

\subsection{Approximation results for Echo State Networks with Output Feedback}\label{sec:OFResult}
We now derive bounds on the error arising when echo state networks with output feedback (see \eqref{eq:ESNrandCoeff2}) are employed to approximate functionals induced by sufficiently regular reservoir systems, that is, functionals satisfying Assumption~\ref{ass:regularityFctl2}. 
 When all parameters are trainable the networks \eqref{eq:ESNrandCoeff2} are also called Jordan networks. Here we consider an echo state network \eqref{eq:ESNrandCoeff2} with ${\bf A},{\bf C},\bm{\zeta}$ generated randomly from a generic distribution. The following theorem shows that such echo state networks with ReLU activation function and randomly generated parameters exhibit rather strong universal approximation properties: the same family of systems can be used to approximate any  functional satisfying a mild smoothness condition (expressed in terms of the Fourier transform as in \cite{Barron1993,Klusowski2018}) and the approximation error is of order  $O(1/\sqrt{N})$. In particular, only ${\bf W}$ needs to be tuned.

\begin{theorem}\label{thm:approx} Let $N \in \N^+$ and denote $\bar{N}=N N^*$.  Suppose $\sigma \colon \R \to \R$ is given as $\sigma(x)=\max(x,0)$, the rows of $[{\bf A},{\bf C}]$ are i.i.d.\ random variables distributed uniformly on $B_1 \subset \R^{N^*+d}$ and the entries of $\bm{\zeta}$ are i.i.d.\ random variables distributed uniformly on $[-M-1,M+1]$. Assume that $D_d \subset B_{M+1}$. Then for any functional $H^*$ satisfying Assumption~\ref{ass:regularityFctl2} there exists a readout ${\bf W}$ (a  $\M_{m,\bar{N}}$-valued random variable) such that for any $\delta \in (0,1)$, with probability $\max(1-\delta-\frac{4 C_*(M+1)}{\sqrt{N}},0)$ the system \eqref{eq:ESNrandCoeff2} initialized at $t=-T-1$ from any $\Xi \in \R^m$ with $\|\Xi\|\leq M$  satisfies the echo state property and the associated functional $H_{\bf W}^{{\bf A},{\bf C},\bm{\zeta}} \colon (D_d)^{\Z_-} \to \R^{m}$ satisfies
\begin{equation} \label{eq:ApproxErrorBound} |\mathcal{R}(H_{\bf W}^{{\bf A},{\bf C},\bm{\zeta}}) - \mathcal{R}(H^*)| \leq \frac{L_L}{\delta} \left[ \frac{2(M+1)C_*}{(1-r)\sqrt{N}} + 2 (M+1) r^{T+1} \right],   \end{equation}
where
\begin{equation} \label{eq:cBdef}
C_* = 16 \sqrt{3((M+1)^3+M+3)(M+1)} C_{H^*}.
\end{equation}
\end{theorem}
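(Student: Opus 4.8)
The plan is to deduce Theorem~\ref{thm:approx} from the general output-feedback approximation bound of Theorem~\ref{thm:approx2} by using Proposition~\ref{prop:SmoothRepresentation} together with an importance-sampling choice of the readout $\mathbf{W}$ that turns the state-update map $\widetilde{F}(\mathbf{x},{\bf z}):=\mathbf{W}\boldsymbol{\sigma}(\mathbf{A}\mathbf{x}+\mathbf{C}{\bf z}+\boldsymbol{\zeta})$ of \eqref{eq:ESNrandCoeff2} into a pointwise unbiased, low-variance estimator of $F^*$ on the region where the smoothness hypothesis is available.

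First I would apply Proposition~\ref{prop:SmoothRepresentation} to each component $F^*_j$ with input dimension $q=N^*+d$ and $R=1$. Since $F^*_j$ is sufficiently smooth (Definition~\ref{def:smooth}) and $2q+6=2(N^*+d+3)$, the quantity $v^*$ in \eqref{eq:smoothnessAss} for $F^*_j$ equals $C_{F^*_j}^2/\mathrm{Vol}_{N^*+d}(B_1)$, so one obtains a measurable $\pi_j$ supported on $\{\|{\bf w}\|\le 1\}\times\{|u|\le M+1\}$ with $F^*_j(\mathbf{x},{\bf z})=\int\pi_j(\boldsymbol{\omega})\,\sigma((\mathbf{x},{\bf z},1)\cdot\boldsymbol{\omega})\,\d\boldsymbol{\omega}$ on the relevant domain and, by part~(iv), $\int\|\boldsymbol{\omega}\|^2\pi_j(\boldsymbol{\omega})^2\,\d\boldsymbol{\omega}\le 8((M+1)^3+M+3)\,C_{F^*_j}^2/\mathrm{Vol}_{N^*+d}(B_1)$. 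Next, the rows $([\mathbf{A}_i,\mathbf{C}_i],\zeta_i)$ are i.i.d.\ uniform on $B_1\subset\R^{N^*+d}$ times $[-M-1,M+1]$, with constant Lebesgue density $c_0=(2(M+1)\mathrm{Vol}_{N^*+d}(B_1))^{-1}$. I split the $\bar N=NN^*$ neurons into $N^*$ blocks of size $N$ and, for $i$ in block $j$, set the $(j,i)$-entry of $\mathbf{W}$ to $c_0^{-1}\pi_j([\mathbf{A}_i,\mathbf{C}_i],\zeta_i)/N$ and the remaining entries of row $j$ to $0$. By the representation of Step~1 each summand has expectation $F^*_j(\mathbf{x},{\bf z})$, so $\widetilde F$ is pointwise unbiased for $F^*$; bounding $\sigma((\mathbf{x},{\bf z},1)\cdot\boldsymbol{\omega})$ uniformly via the support of $\pi_j$ and the bound on $\|(\mathbf{x},{\bf z})\|$ and then invoking part~(iv), one obtains — exactly as in the computation \eqref{eq:Cstar} — a uniform (in $(\mathbf{x},{\bf z})$) mean-square bound $\E[\|\widetilde F(\mathbf{x},{\bf z})-F^*(\mathbf{x},{\bf z})\|^2]\le C_*^2/N$, with $C_*$ the constant in \eqref{eq:cBdef}.

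With this $\mathbf{W}$, the output-feedback system \eqref{eq:ESNrandCoeff2} is precisely the reservoir system with state map $\widetilde F$, while $H^*$ is the reservoir functional of the $r$-contraction $F^*$ (Assumption~\ref{ass:regularityFctl2}). I would then feed the construction of the previous step into Theorem~\ref{thm:approx2}, whose hypotheses are exactly that $F^*$ is contractive, that $\widetilde F$ approximates $F^*$ in the above mean-square sense, and that the loss is Lipschitz \eqref{eq:lossLipschitz}; its conclusion is \eqref{eq:ApproxErrorBound}, with the factor $(1-r)^{-1}$ arising from telescoping the one-step discrepancy through the recursion and summing $\sum_{k\ge 0}r^k$, the term $2(M+1)r^{T+1}$ from truncating at $t=-T-1$ and initializing from $\Xi$ with $\|\Xi\|\le M$, and the prefactor $L_L/\delta$ from Markov's inequality applied to the conditionally deterministic quantity $|\mathcal R(H_{\bf W}^{{\bf A},{\bf C},\bm{\zeta}})-\mathcal R(H^*)|$. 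The echo state property is immediate because the system is initialized at the fixed time $t=-T-1$, and the probability loss $\delta+4C_*(M+1)/\sqrt N$ is a union bound of the Markov event with the event $\{\widetilde F$ leaves $B_{M+1}\}$; since $F^*$ maps into $B_M$, leaving a margin of $1$, this latter event has probability at most $4C_*(M+1)/\sqrt N$ by Markov applied to the uniform mean-square bound of Step~2.

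The main obstacle is the output feedback: because $\mathbf{W}$ sits inside the state recursion one cannot merely invoke the static $L^2(\mu_{\mathbf{Z}})$ bounds of Section~\ref{sec:Static} pointwise and must instead (i) control the one-step approximation uniformly enough to iterate through all $T+1$ steps and close the loop with the contraction constant $r$, and (ii) guarantee that the approximating trajectory stays inside the ball $B_{M+1}$ on which the integral representation of $F^*$ supplied by Proposition~\ref{prop:SmoothRepresentation} is valid — this second point being precisely what forces the additional probability loss $4C_*(M+1)/\sqrt N$. Both of these are packaged in Theorem~\ref{thm:approx2}, so the remaining work is to verify its hypotheses for the randomly drawn $\widetilde F$ and to track the constants down to \eqref{eq:cBdef}.
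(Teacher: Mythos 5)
Your proposal follows essentially the same route as the paper's proof: apply Proposition~\ref{prop:SmoothRepresentation} componentwise with $q=N^*+d$, $R=1$ and $M$ replaced by $M+1$, take $\pi_j^N(\d\bm{\omega})=\pi_j^*(\bm{\omega})\d\bm{\omega}$ with Radon--Nikodym derivative $2(M+1)\mathrm{Vol}_{N^*+d}(B_1)\pi_j^*$ with respect to the uniform sampling law, check that this yields the constant $C_*$ of \eqref{eq:cBdef} and that $s_N=0$ (the representation being exact on $B_{M+1}\times D_d$), and invoke Theorem~\ref{thm:approx2}. The only imprecision is that you phrase the relevant hypothesis of Theorem~\ref{thm:approx2} as a pointwise mean-square bound on $\mathbf{W}\bm{\sigma}(\mathbf{A}\mathbf{x}+\mathbf{C}\mathbf{z}+\bm{\zeta})-F^*(\mathbf{x},\mathbf{z})$, whereas that theorem actually consumes the second-moment condition on $\|\bm{\omega}\|\,\frac{\d\pi_j^N}{\d\pi}(\bm{\omega})$ (from which the uniform-in-$\mathbf{x}$ control is extracted inside its proof via symmetrization and the Ledoux--Talagrand contraction principle); since you compute exactly that quantity and explicitly flag the uniformity issue as being packaged in Theorem~\ref{thm:approx2}, this does not affect correctness.
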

Theorem~\ref{thm:approx} follows from combining the representation in Proposition~\ref{prop:SmoothRepresentation} with our general reservoir approximation result, Theorem~\ref{thm:approx2} below. Note that in Theorem~~\ref{thm:approx2} below also the boundedness assumption $D_d \subset B_{M+1}$ is not required.

\begin{proof}[Proof of Theorem~\ref{thm:approx}] Firstly, for any $j=1,\ldots,N^*$ the function $F^*_j$ satisfies the hypotheses of Proposition~\ref{prop:SmoothRepresentation}. Therefore, there exists an integrable function $\pi_j^* \colon \R^{N^*+d+1} \to \R$ such that for ${\bf x} \in B_{M+1}$, ${\bf z} \in D_d$, the function $F^*_j$ can be represented as 
\[ F^*_j(\mathbf{x},{\bf z}) = \int_{\R^{N^*+d+1}} \sigma((\mathbf{x},{\bf z},1) \cdot \bm{\omega}) \pi_j^*(\bm{\omega}) \d \bm{\omega},  \]
 and $\pi_j^*(\bm{\omega}) = 0$ for all $\bm{\omega} = ({\bf w},u) \in \R^{N^*+d} \times \R$  satisfying $\|{\bf w}\|>1$ or $|u|>M+1$, and
\begin{equation}\label{eq:auxEq67} \int_{\R^{N^*+d+1}} \|\bm{\omega}\|^2 \pi_j^*(\bm{\omega})^2 \d \bm{\omega} \leq 8 ((M+1)^3+M+3) C_{F_j^*}.
\end{equation}
Recall that the entries of $\bm{\zeta}$ are uniformly distributed on $[-(M+1),M+1]$.
Setting $\pi^k_j (\d \bm{\omega})= \pi_j^*(\bm{\omega})\d \bm{\omega}$ for all $k \in \N^+$, denoting by $\pi_1$ and $\pi_2$ the uniform distribution on $B_1  \subset \R^{N^*+d}$  and $[-(M+1),(M+1)]$, respectively, and setting $\pi = \pi_1 \otimes \pi_2$, one has that $\pi^k_j \ll \pi$ and   
$\frac{\d \pi_j^k}{\d \pi} = 2 \mathrm{Vol}_{N^*+d}(B_1) (M+1)\pi_j^*$. Using \eqref{eq:auxEq67} one therefore obtains 
\[\begin{aligned}  &  4 \sqrt{3} \sum_{j=1}^{N^*} \left(\int_{\R^{N^*+d+1}} \|\bm{\omega}\|^2 \left(\frac{\d \pi_j^k}{\d \pi}(\bm{\omega}) \right)^2 \pi(\d\bm{\omega})\right)^{1/2} \\
&  =  4 \sqrt{6(M+1)\mathrm{Vol}_{N^*+d}(B_1)} \sum_{j=1}^{N^*} \left(\int_{\R^{N^*+d+1}} \|\bm{\omega}\|^2 \pi_j^*(\bm{\omega})^2 \d \bm{\omega} \right)^{1/2}
\\ & \leq 16 \sqrt{3((M+1)^3+M+3)(M+1)} C_{H^*}
\end{aligned} \]
and so the constant $C_*$ in \eqref{eq:cBdef} is larger or equal than the constant $C_*$ in Theorem~\ref{thm:approx2} below. Furthermore, $s_k = 0$ for all $k \in \N^+$ and thus the statement follows from Theorem~\ref{thm:approx2} below.\qed
\end{proof}

\begin{remark}\label{rmk:generalReadout} As pointed out above, here we focus on systems \eqref{eq:Hstarsystem2} in which $h^*$ is the identity. However, Theorem~\ref{thm:approx} could also be extended to more general $h^*$, namely those satisfying that $F_j^* = h_{j-N^*} \circ F^*$ is sufficiently smooth (see Definition~\ref{def:smooth}) for $j=N^*+1,\ldots,N^*+m$.  The matrix ${\bf A}$ in \eqref{eq:ESNrandCoeff2} would then be replaced by ${\bf A} P$ with 
$
P= \left(
\begin{array}{cc}
 \boldsymbol{I}_{N^*}&\boldsymbol{0}_{N^*,m}
\end{array}
\right)$.
\end{remark}
Finally, we prove a more general echo state network approximation result valid for functionals induced by reservoir systems with reservoir function $F^*$ that can be approximated well by functions of the form \eqref{eq:FstarApprox}.

\begin{theorem}\label{thm:approx2} Let $r \in (0,1), L_\sigma >0$, $N \in \N^+$ and denote $\bar{N}=N N^*$.  Suppose $\sigma \colon \R \to \R$ is $L_\sigma$-Lipschitz continuous and the rows of $[{\bf A},{\bf C},\bm{\zeta}]$ are i.i.d.\ random variables with distribution $\pi$. Suppose
 $H^* \colon (D_d)^{\Z_-} \to \R^m$ is the reservoir functional associated to some $F^* \colon \R^{N^*} \times D_d \to B_M \subset \R^{N^*}$, i.e., for any ${\bf z} \in (D_d)^{\Z_-}$ it is given as  $H^*({\bf z}) = {\bf x}_0^*$, where ${\bf x}^*$ satisfies \eqref{eq:Hstarsystem2}. Assume that for each ${\bf v} \in D_d$,  $F^*(\cdot,{\bf v})$ is an $r$-contraction. Furthermore, for any $k \in \N^+$, $j=1,\ldots,N^*$,  let $\pi^k_j$ be a signed Borel-measure on $\R^{N^*+d+1}$ such that $\pi^k_j \ll \pi$, $\int_{\R^{N^*+d+1}} \|\bm{\omega}\|  |\pi_j^k|(\d \bm{\omega})< \infty$ and 
 \[ C_* = 4 \sqrt{3} L_\sigma \sup_{k \in \N^+} \sum_{j=1}^{N^*} \left(\int_{\R^{N^*+d+1}} \|\bm{\omega}\|^2 \left(\frac{\d \pi_j^k}{\d \pi}(\bm{\omega})\right)^2  \pi(\d\bm{\omega})\right)^{1/2} < \infty. \]
Denote for each  $j=1,\ldots,N^*$
 \begin{equation}\label{eq:FstarApprox}
  F^{*,N}_j(\mathbf{x},{\bf v}) = \int_{\R^{N^*+d+1}}  \sigma((\mathbf{x},{\bf v},1) \cdot \bm{\omega})  \pi_j^N(\d \bm{\omega}), \quad \mathbf{x} \in \R^{N^*}, {\bf v} \in D_d
 \end{equation}
and assume $s_N = \E[\max_{t \in \{0, \ldots,-T\}} \sup_{\mathbf{x} \in B_{M+1}} \|F^{*,N}(\mathbf{x},{\bf Z}_t)- F^*(\mathbf{x},{\bf Z}_t)\|] < 1$. Assume that $\E\left[\max_{t \in \{0, \ldots,-T\}} \|{\bf Z}_t\|\right] < \infty$.
 Then there exists a readout ${\bf W}$ (a  $\M_{m,\bar{N}}$-valued random variable) such that for any $\delta \in (0,1)$, with probability at least $\max(1-\delta-\frac{2C_*(M+2+\E\left[\max_{t \in \{0, \ldots,-T\}} \|{\bf Z}_t\|\right])}{\sqrt{N}}-2s_N,0)$
  the system \eqref{eq:ESNrandCoeff2} initialized at $t=-T-1$ from any $\Xi \in \R^m$ with $\|\Xi\|\leq M$  satisfies the echo state property and the associated functional $H_{\bf W}^{{\bf A},{\bf C},\bm{\zeta}} \colon (D_d)^{\Z_-} \to \R^{m}$ satisfies
\[\begin{aligned}   &|\mathcal{R}(H_{\bf W}^{{\bf A},{\bf C},\bm{\zeta}}) - \mathcal{R}(H^*)|  \\ & \leq \frac{L_L}{\delta} \left[ \frac{(M+2+\max_{t \in \{0, \ldots,-T\}}\E[\|{\bf Z}_t \|])C_*}{(1-r)\sqrt{N}} + \frac{s_N}{1-r} + 2 (M+1) r^{T+1} \right]. \end{aligned} \]
\end{theorem}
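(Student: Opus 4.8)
The plan is to reduce the dynamic problem to a \emph{uniform} static approximation of the reservoir map $F^*$, to propagate the resulting error through the reservoir recursion by exploiting the $r$-contractivity of $F^*$, and finally to convert an expected state error into the asserted high-probability risk bound via the Lipschitz property \eqref{eq:lossLipschitz} of $L$ and Markov's inequality. First I would fix the readout: writing $({\bf A}_i,{\bf C}_i,\zeta_i)$, $i=1,\dots,N$, for the rows of $[{\bf A},{\bf C},\bm{\zeta}]$ feeding the $j$-th output block and setting $V_i^j=\frac{\d\pi_j^N}{\d\pi}({\bf A}_i,{\bf C}_i,\zeta_i)$, I take ${\bf W}$ block-structured so that the random map $\widetilde F(\mathbf{x},{\bf v})={\bf W}\bm{\sigma}([{\bf A},{\bf C}](\mathbf{x},{\bf v})+\bm{\zeta})$ has $j$-th component $\widetilde F_j(\mathbf{x},{\bf v})=\frac1N\sum_{i=1}^N V_i^j\,\sigma((\mathbf{x},{\bf v},1)\cdot({\bf A}_i,{\bf C}_i,\zeta_i))$. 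Exactly as in \eqref{eq:auxEq70} in the proof of Theorem~\ref{thm:approxstatic}, the change of measure $\pi_j^N=\frac{\d\pi_j^N}{\d\pi}\pi$ gives $\E[\widetilde F_j(\mathbf{x},{\bf v})]=F^{*,N}_j(\mathbf{x},{\bf v})$ for every fixed $(\mathbf{x},{\bf v})$, so $\widetilde F$ is an unbiased Monte Carlo estimator of the integral-representable proxy \eqref{eq:FstarApprox}.

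The second-moment estimate underlying Theorem~\ref{thm:approxstatic} does not suffice here, since the reservoir state visits all of $B_{M+1}$; instead I would bound, for fixed ${\bf v}\in D_d$, the quantity $\E[\sup_{\mathbf{x}\in B_{M+1}}\|\widetilde F(\mathbf{x},{\bf v})-F^{*,N}(\mathbf{x},{\bf v})\|]$ by Rademacher symmetrization followed by the Ledoux-Talagrand contraction inequality~\cite{Ledoux2013} applied to the $L_\sigma$-Lipschitz map $\sigma$. This reduces the problem to the Rademacher complexity of the affine functionals $\mathbf{x}\mapsto(\mathbf{x},{\bf v},1)\cdot({\bf A}_i,{\bf C}_i,\zeta_i)$ on $B_{M+1}$, controlled by $\sup_{\mathbf{x}\in B_{M+1}}\|(\mathbf{x},{\bf v},1)\|\le M+2+\|{\bf v}\|$ and $(\sum_i(V_i^j)^2\|({\bf A}_i,{\bf C}_i,\zeta_i)\|^2)^{1/2}$; summing over $j$ and taking expectations I expect to obtain
\[
\E\Big[\sup_{\mathbf{x}\in B_{M+1}}\|\widetilde F(\mathbf{x},{\bf v})-F^{*,N}(\mathbf{x},{\bf v})\|\Big]\ \le\ \frac{(M+2+\|{\bf v}\|)\,C_*}{\sqrt N},
\]
the factor $4\sqrt 3 L_\sigma$ in $C_*$ being exactly the combined symmetrization / contraction / triangle-inequality constant. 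Adding and subtracting $F^{*,N}$ and invoking the definition of $s_N$ then converts this into a bound on the error of $\widetilde F$ against the true $F^*$ along the input path ${\bf Z}$.

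Next I would fix a realization of the weights and carry out the pathwise propagation. Let $\mathbf{x}^*$ be the (unique, by Proposition~\ref{prop:contractionEchoState}) $F^*$-trajectory defining $H^*$, let $\mathbf{x}^{*,T}$ be the $F^*$-trajectory started at $t=-T-1$ from $\Xi$, and let $\widehat{\mathbf{y}}$ be the trajectory of $\widetilde F$ started at $t=-T-1$ from $\Xi$, so that $H_{\bf W}^{{\bf A},{\bf C},\bm{\zeta}}({\bf Z})=\widehat{\mathbf{y}}_0$. Since $F^*(\cdot,{\bf v})$ is an $r$-contraction and $\mathbf{x}^*,\mathbf{x}^{*,T}$ are driven by the same input, iterating the contraction over the $T+1$ steps gives $\|\mathbf{x}^*_0-\mathbf{x}^{*,T}_0\|\le r^{T+1}\|\mathbf{x}^*_{-T-1}-\Xi\|\le 2M r^{T+1}$ (using that $\mathbf{x}^*_{-T-1}$ lies in the range $B_M$ of $F^*$ and $\|\Xi\|\le M$), which yields the forgetting term. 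For the remaining difference I would run an induction on $t\in\{-T,\dots,0\}$: writing $\widehat{\mathbf{y}}_t=\widetilde F(\widehat{\mathbf{y}}_{t-1},{\bf Z}_t)$ and adding/subtracting $F^*(\widehat{\mathbf{y}}_{t-1},{\bf Z}_t)$,
\[
\|\widehat{\mathbf{y}}_t-\mathbf{x}^{*,T}_t\|\ \le\ r\,\|\widehat{\mathbf{y}}_{t-1}-\mathbf{x}^{*,T}_{t-1}\|+\sup_{\mathbf{x}\in B_{M+1}}\|\widetilde F(\mathbf{x},{\bf Z}_t)-F^*(\mathbf{x},{\bf Z}_t)\|,
\]
which is legitimate provided $\widehat{\mathbf{y}}_{t-1}\in B_{M+1}$; since $\mathbf{x}^{*,T}_{t-1}\in B_M$, the induction hypothesis $\|\widehat{\mathbf{y}}_{t-1}-\mathbf{x}^{*,T}_{t-1}\|\le 1$ keeps $\widehat{\mathbf{y}}_{t-1}$ inside $B_{M+1}$, and it is exactly here that the requirement that the uniform error of Step~2 together with $s_N$ stay below $1-r$ enters. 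Summing the geometric series bounds $\|\widehat{\mathbf{y}}_0-\mathbf{x}^{*,T}_0\|$ by $(1-r)^{-1}$ times that uniform error, and one also verifies the echo state property on this event. Altogether $\|H_{\bf W}^{{\bf A},{\bf C},\bm{\zeta}}({\bf Z})-H^*({\bf Z})\|\le (1-r)^{-1}\sup_{\mathbf{x}\in B_{M+1}}\max_t\|\widetilde F(\mathbf{x},{\bf Z}_t)-F^*(\mathbf{x},{\bf Z}_t)\|+2(M+1)r^{T+1}$.

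Finally I would take expectations, organizing the telescoping estimate as a geometric sum weighted by $r^k$ (rather than as $(1-r)^{-1}$ times a maximum over $t$) so that no spurious factor $T$ appears: taking $\E$ over ${\bf Z}$ and then over the weights, and inserting Step~2 and the definition of $s_N$, I expect $\E[\|H_{\bf W}^{{\bf A},{\bf C},\bm{\zeta}}({\bf Z})-H^*({\bf Z})\|]\le \frac{(M+2+\E\max_t\|{\bf Z}_t\|)C_*}{(1-r)\sqrt N}+\frac{s_N}{1-r}+2(M+1)r^{T+1}$ on the relevant event. The prefactor $\frac{1}{\delta}$ and the three quantities $\delta$, $\frac{2C_*(M+2+\E\max_t\|{\bf Z}_t\|)}{\sqrt N}$ and $2s_N$ subtracted from the success probability should come from three applications of Markov's inequality — one to the expected state error, one to the event that the static uniform error exceeds its threshold, and one to the event that $\max_t\sup_{\mathbf{x}}\|F^{*,N}-F^*\|$ (with expectation $s_N$) exceeds its threshold — after which \eqref{eq:lossLipschitz} gives $|\mathcal{R}(H_{\bf W}^{{\bf A},{\bf C},\bm{\zeta}})-\mathcal{R}(H^*)|\le L_L\,\E[\|H_{\bf W}^{{\bf A},{\bf C},\bm{\zeta}}({\bf Z})-H^*({\bf Z})\|]$. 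I expect the main obstacle to be the interplay of Steps~2 and 3: one must upgrade the pointwise Monte Carlo bound to a \emph{uniform} one over $B_{M+1}$ — which forces the empirical-process machinery — and then couple this with the bootstrap that the approximating trajectory never escapes $B_{M+1}$, the only region where that uniform bound holds, while keeping the probabilistic accounting free of $T$-dependent constants.
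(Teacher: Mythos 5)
Your proposal follows essentially the same route as the paper: the block-diagonal importance-sampling readout with $V^{(i)}_j=\frac{\d\pi^N_j}{\d\pi}({\bf U}^{(i)}_j)$, the unbiasedness of the resulting random map for $F^{*,N}$, the uniform-over-$B_{M+1}$ Monte Carlo bound via Rademacher symmetrization and the Ledoux--Talagrand contraction principle (this is exactly where the constant $4\sqrt3 L_\sigma$ arises), the three-way triangle split into Monte Carlo error, $s_N$, and $r$ times the previous deviation, the $r^k$-weighted geometric sum, and the final Markov arguments for both the risk and the failure probability.

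The one place where your accounting deviates from the paper's is the event on which the trajectory is guaranteed to stay in $B_{M+1}$. You bootstrap via the accumulated deviation, requiring the per-step uniform error to stay below $1-r$ so that $\|\widehat{\bf y}_{t-1}-\mathbf{x}^{*,T}_{t-1}\|\le 1$; Markov applied at threshold $1-r$ (or two thresholds of $(1-r)/2$) then produces a factor $(1-r)^{-1}$ in the probability deficit, which is \emph{not} present in the stated bound $1-\delta-\frac{2C_*(\cdots)}{\sqrt N}-2s_N$. The paper avoids this by defining $\Omega_{ESP}$ directly as the event $\sup_{\mathbf{x}\in B_{M+1},\,t}\|{\bf W}F^{{\bf A},{\bf C},\bm\zeta}(\mathbf{x},{\bf Z}_t)\|\le M+1$: since $F^*$ takes values in $B_M$, one only needs the per-step error below $1$ (split into two halves, whence the factors of $2$), and on this event $\|{\bf Y}_t\|\le M+1$ follows immediately for every $t\ge -T$ without invoking the contraction at all. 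Replacing your bootstrap by this direct boundedness event recovers the stated probability; otherwise your argument is sound but proves a slightly weaker success probability.
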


\begin{remark} Let us discuss the assumption $\E\left[\max_{t \in \{0, \ldots,-T\}} \|{\bf Z}_t\|\right] < \infty$. Firstly, suppose that the input signal satisfies $\|{\bf Z}_t \| \leq B$, $\P$-a.s. for all $t \in \Z_-$. Then clearly also $\E\left[\max_{t \in \{0, \ldots,-T\}} \|{\bf Z}_t\|\right] \leq B$ and so one may initialize the system at any $T \geq \frac{\log(\sqrt{N})}{-\log(r)}-1$ in order to achieve an approximation error bound \eqref{eq:ApproxErrorBound} of order $\frac{1}{\sqrt{N}}$ with high probability $1-O(\frac{1}{\sqrt{N}})$. 
However, our result also covers more general situations. For instance, suppose that $d=1$ and for each $t \in \Z_-$, $Z_t$ is standard normally distributed (not necessarily independent). Then one can show that 
\[ \E\left[\max_{t \in \{0, \ldots,-T\}} \|{\bf Z}_t\|\right] \leq \sqrt{2\log(2T)}, \]
and consequently, choosing $T$ as in the first case, one obtains an error bound of order $\frac{1}{\sqrt{N}}$ with high probability $1-O(\frac{\sqrt{\log(\log(N))}}{\sqrt{N}})$. 
\end{remark}

\begin{proof}[Proof of Theorem~\ref{thm:approx2}]
Recall that $N^* = m$, $h^*({\bf y}) = {\bf y}$ and let us write ${\bf A}$, ${\bf C}$, $\bm{\zeta}$ as block matrices
 \[  {\bf A} = \begin{pmatrix}
    {\bf A}^{(1)} \\
     \vdots    \\
     {\bf A}^{(N)}
   \end{pmatrix} \in \R^{N N^* \times N^*},  {\bf C} =  \begin{pmatrix}
      {\bf C}^{(1)} \\
       \vdots    \\
       {\bf C}^{(N)}
     \end{pmatrix} \in \M_{N N^*,d}, \text{ and }  \bm{\zeta}=  \begin{pmatrix}
   \bm{\zeta}^{(1)} \\
    \vdots    \\
    \bm{\zeta}^{(N)}
  \end{pmatrix} \in \R^{N N^*}, \]
 where ${\bf A}^{(i)}$, ${\bf C}^{(i)}$ and $\bm{\zeta}^{(i)}$ are random matrices (resp. vectors) valued in  $\M_{N^*,N^*}$, $\M_{N^*,d}$ and $\R^{N^*}$, respectively, for each $i=1,\ldots,N$. Define the readout 
\begin{equation*} 
{\bf W} = \frac{1}{N} \begin{pmatrix}
  {\bf W}_1 &
   \cdots    &
   {\bf W}_N
 \end{pmatrix}, \quad {\bf W}_i = \begin{pmatrix}
   V_1^{(i)} & & \\
    & \ddots    & \\
    & & V_{N^*}^{(i)}
  \end{pmatrix},
\end{equation*}
where  ${\bf U}_j^{(i)}=({\bf A}_j^{(i)},
{\bf C}_j^{(i)},\zeta_j^{(i)})$ denotes the $j$-th row of $({\bf A}^{(i)},{\bf C}^{(i)},\zeta^{(i)})$ and $V_j^{(i)}$ is given as
\[V_j^{(i)} = \frac{\d \pi_j^N}{\d \pi }({\bf U}_j^{(i)}). \]
By our choice of $V_j^{(i)}$ one calculates for each $i=1,\ldots,N$ and any ${\bf y} \in B_{M+1}$, ${\bf z} \in D_d$, 
\begin{equation}\label{eq:auxEq58}\begin{aligned} \E[({\bf W}_i \sigma({\bf A}^{(i)} {\bf y} + {\bf C}^{(i)} {\bf z} + \bm{\zeta}^{(i)}))_j] & = \E[V_j^{(i)} \sigma(( {\bf y},{\bf z},1)\cdot {\bf U}_{j}^{(i)})] 
\\ & 
= \int_{\R^{N^*+d+1}} \sigma(( {\bf y},{\bf z},1)\cdot \bm{\omega}) \frac{\d \pi_j^N}{\d \pi}(\bm{\omega})  \pi(\d\bm{\omega})  \\ 
& = F_j^{*,N}({\bf y},{\bf z})\end{aligned} \end{equation}
and 
\[ \begin{aligned}
\E[(V_j^{(1)})^2 (\| {\bf A}_j^{(1)}\|^2+ \|{\bf C}_j^{(1)}\|^2 + |\zeta_j^{(1)}|^2)] &  =  \int_{\R^{N^*+d+1}} \|\bm{\omega}\|^2 \left( \frac{\d \pi_j^N}{\d \pi}(\bm{\omega})\right)^2   \pi(\d\bm{\omega}).
\end{aligned} \]
This shows that 
\begin{equation}\label{eq:auxEq59} 4 \sqrt{3} L_\sigma \sum_{j=1}^{N^*} \E[(V_j^{(1)})^2 (\| {\bf A}_j^{(1)}\|^2+ \|{\bf C}_j^{(1)}\|^2 + |\zeta_j^{(1)}|^2)]^{1/2} \leq C_*. \end{equation}

\textit{Measurability and echo state property:}

\noindent Consider
\begin{equation}\label{eq:OmegaESPDef4}\begin{aligned} \Omega_{ESP} & = \left\lbrace \omega \in \Omega \colon \bar{M}(\omega) \leq M +1 \right\rbrace, \quad 
 \bar{M}  = \sup_{\substack{\mathbf{x} \in B_{M+1} \\ t\in \{0,\ldots,-T\}}} \|{\bf W} F^{{\bf A},{\bf C},\bm{\zeta}}(\mathbf{x},{\bf Z}_t) \|. \end{aligned}
\end{equation}
By continuity the supremum in \eqref{eq:OmegaESPDef4} is finite and can also be taken over a countable set. This shows that $\Omega_{ESP} \in \Fc$. Furthermore, consider the system \eqref{eq:ESNrandCoeff2} initialized at $t=-T-1$ from a given $\Xi \in \R^m$ with $\|\Xi\|\leq M$. Clearly, for any ${\bf z} \in (D_d)^{\Z_-}$ there is a unique $({\bf y}_t)_{t=0,\ldots,-T}$ satisfying \eqref{eq:ESNrandCoeff2} and for any $\omega \in \Omega$ the function $H_{\bf W(\omega)}^{{\bf A}(\omega),{\bf C}(\omega),\bm{\zeta}(\omega)} \colon (D_d)^{\Z_-} \to \R^{N^*}$ mapping ${\bf z} \in (D_d)^{\Z_-}$ to ${\bf y}_0(\omega)$ is continuous. On the other hand, for any ${\bf z} \in (D_d)^{\Z_-}$ the mapping $\omega \mapsto H_{\bf W(\omega)}^{{\bf A}(\omega),{\bf C}(\omega),\bm{\zeta}(\omega)}({\bf z})$ is $\Fc$-measurable and thus \cite[Lemma~4.51]{Aliprantis2006} implies that  $H_{{\bf W}}^{{\bf A},{\bf C},\bm{\zeta}}$ is product-measurable, i.e.\ the function $(\omega,{\bf z}) \ni \Omega \times (D_d)^{\Z_-} \mapsto H_{{\bf W}(\omega)}^{{\bf A}(\omega),{\bf C}(\omega),\bm{\zeta}(\omega)}({\bf z}) \in \R^{m}$ is $\Fc \otimes \mathcal{B}((D_d)^{\Z_-})$-measurable.

Writing ${\bf Y}$ for the associated process ${\bf y}$ with input ${\bf z} = {\bf Z}$, we note that for $\omega \in \Omega_{ESP}$ and $t \geq -T$ 
\[ {\bf Y}_t(\omega) = {\bf W}(\omega) F^{{\bf A}(\omega),{\bf C}(\omega),\bm{\zeta}(\omega)}(\mathbf{Y}_{t-1}(\omega),{\bf Z}_t(\omega)) \]
and consequently, by \eqref{eq:OmegaESPDef4}, $\|{\bf Y}_t(\omega)\| \leq M + 1$ for all $t \geq -T-1$. 

\textit{Risk estimation on $\Omega_{ESP}$:}

\noindent Firstly, by \eqref{eq:lossLipschitz} one has for any measurable $H \colon (D_d)^{\Z_-} \to \R^m$
\[ |\mathcal{R}(H) - \mathcal{R}(H^*)|  \leq L_L \E[\|H({\bf Z})- H^*({\bf Z}) \|]= L_L \int_{(D_d)^{\Z_-}} \|H({\bf z})- H^*({\bf z})\| \mu_{{\bf Z}}(\d {\bf z}). \]
Thus 
\begin{equation}\label{eq:auxEq51} \E[|\mathcal{R}(H_{\bf W}^{{\bf A},{\bf C},\bm{\zeta}}) - \mathcal{R}(H^*)|\mathbbm{1}_{\Omega_{ESP}}] \leq L_L \E[ \|H_{\bf W}^{{\bf A},{\bf C},\bm{\zeta}}({\bf Z}) - H^*({\bf Z})\|\mathbbm{1}_{\Omega_{ESP}}].  \end{equation}
For each $t \geq -T$ one estimates
\begin{equation}\label{eq:auxEq47}\begin{aligned}
\E[ &  \|{\bf Y}_t  - \mathbf{X}^*_{t}\|\mathbbm{1}_{\Omega_{ESP}}] \\ &  = \E[ \|\frac{1}{N}\sum_{i=1}^N {\bf W}_i \sigma({\bf A}^{(i)} \mathbf{Y}_{t-1} + {\bf C}^{(i)} {\bf Z}_t + \bm{\zeta}^{(i)}) - F^*(\mathbf{X}^*_{t-1},{\bf Z}_t)\|\mathbbm{1}_{\Omega_{ESP}}] 
\\ & \leq  \E[\sup_{{\bf y} \in B_{M+1}} \|\frac{1}{N}\sum_{i=1}^N {\bf W}_i \sigma({\bf A}^{(i)} {\bf y} + {\bf C}^{(i)} {\bf Z}_t + \bm{\zeta}^{(i)}) - F^{*,N}({\bf y},{\bf Z}_t)\|\mathbbm{1}_{\Omega_{ESP}}]  \\ &  \qquad+ \E[ \| F^{*,N}(\mathbf{Y}_{t-1},{\bf Z}_t)- F^*(\mathbf{Y}_{t-1},{\bf Z}_t)\|\mathbbm{1}_{\Omega_{ESP}}] \\ & \qquad + \E[ \| F^*(\mathbf{Y}_{t-1},{\bf Z}_t)- F^*(\mathbf{X}^*_{t-1},{\bf Z}_t)\|\mathbbm{1}_{\Omega_{ESP}}] 
\\ & \leq \E[\sup_{{\bf y} \in B_{M+1}} \|\frac{1}{N}\sum_{i=1}^N {\bf W}_i \sigma({\bf A}^{(i)} {\bf y} + {\bf C}^{(i)} {\bf Z}_t + \bm{\zeta}^{(i)}) - F^{*,N}({\bf y},{\bf Z}_t)\|] \\ &  \qquad + s_N +  r \E[ \|\mathbf{Y}_{t-1}- \mathbf{X}^*_{t-1}\|\mathbbm{1}_{\Omega_{ESP}}].  
 \end{aligned} \end{equation}
Denoting by $\varepsilon_1, \ldots, \varepsilon_N$ independent Rademacher random variables, we thus obtain by \eqref{eq:auxEq58}, independence and symmetrization that for any ${\bf z} \in D_d$ 
\begin{equation}\label{eq:auxEq46} \begin{aligned} \E[\sup_{{\bf y} \in  B_{M+1}} \|\frac{1}{N}\sum_{i=1}^N &  {\bf W}_i \sigma({\bf A}^{(i)} {\bf y} + {\bf C}^{(i)} {\bf z} + \bm{\zeta}^{(i)}) - F^{*,N}({\bf y},{\bf z})\|] \\ & \leq \sum_{j=1}^{N^*} \E[\sup_{{\bf y} \in  B_{M+1}} \left| \frac{1}{N}\sum_{i=1}^N V_j^{(i)} \sigma(( {\bf y},{\bf z},1)\cdot {\bf U}_{j}^{(i)}) - F_j^{*,N}({\bf y},{\bf z})\right|]
\\ & \leq 2 \sum_{j=1}^{N^*} \E[\sup_{{\bf y} \in  B_{M+1}} \left| \frac{1}{N}\sum_{i=1}^N V_j^{(i)} \varepsilon_i \sigma(( {\bf y},{\bf z},1)\cdot {\bf U}_{j}^{(i)})\right|].
\end{aligned} \end{equation}
Furthermore, for any $v_i \in \R$, ${\bf u}_i=({\bf a}_i,{\bf c}_i,{\zeta}_i) \in S$, $i=1,\ldots,N$ the contraction  principle \cite[Theorem~4.12]{Ledoux2013} (applied to the contractions $\sigma_i(x)=\mathbbm{1}_{\{v_i \neq 0\}} v_i \sigma(x \frac{1}{L_\sigma v_i})$) yields
\begin{equation}\label{eq:auxEq53}\begin{aligned} & \E[\sup_{{\bf y} \in B_{M+1}} \left|\sum_{i=1}^N v_i \varepsilon_i \sigma(( {\bf y},{\bf z},1)\cdot {\bf u}_{i})\right|] 
\\ &  = \E[\sup_{{\bf y} \in B_{M+1}} \left|\sum_{i=1}^N \varepsilon_i \sigma_i(L_\sigma v_i( {\bf y},{\bf z},1)\cdot {\bf u}_{i})\right|] \\ & \leq 2 L_\sigma \E[\sup_{{\bf y} \in B_{M+1}} \left|\sum_{i=1}^N v_i \varepsilon_i (( {\bf y},{\bf z},1)\cdot {\bf u}_{i})\right|] 
\\ & \leq 2 L_\sigma\left((M+1) \E[ \|\sum_{i=1}^N v_i \varepsilon_i {\bf a}_{i} \|] + \E[ |\sum_{i=1}^N v_i \varepsilon_i ({\bf c}_{i} \cdot {\bf z} + \zeta_i) |] \right) 
\\ & \leq 2 L_\sigma\left((M+1)(\sum_{i=1}^N v_i^2 \| {\bf a}_{i}\|^2)^{1/2} + \|{\bf z}\| \left( \sum_{i=1}^N v_i^2 \|{\bf c}_{i}\|^2 \right)^{1/2} +  (\sum_{i=1}^N v_i^2 | \zeta_i|^2 )^{1/2} \right). \end{aligned}
 \end{equation}
By conditioning, using independence and combining this with \eqref{eq:auxEq46} one thus obtains
\begin{equation}\label{eq:auxEq54}\begin{aligned}
& \E[  \sup_{{\bf y} \in B_{M+1}}   \|\frac{1}{N}\sum_{i=1}^N   {\bf W}_i \sigma({\bf A}^{(i)} {\bf y} + {\bf C}^{(i)} {\bf z} + \bm{\zeta}^{(i)}) - F^{*,N}({\bf y},{\bf z})\|] 
\\ &  \leq \frac{4 L_\sigma}{N} \sum_{j=1}^{N^*} \E[(M+1) (\sum_{i=1}^N (V_j^{(i)})^2 \| {\bf A}_j^{(i)}\|^2)^{1/2} + \|{\bf z}\| \left( \sum_{i=1}^N (V_j^{(i)})^2 \|{\bf C}_j^{(i)} \|^2 \right)^{1/2} \\ & \qquad  + (\sum_{i=1}^N (V_j^{(i)})^2| \zeta_j^{(i)}|^2)^{1/2}]
\\ & 
\leq \frac{4 L_\sigma}{\sqrt{N}} \sum_{j=1}^{N^*} \left[ (M+1) \E[(V_j^{(1)})^2 \| {\bf A}_j^{(1)}\|^2]^{1/2} + \|{\bf z}\| \E[(V_j^{(1)})^2 \|{\bf C}_j^{(1)}\|^2]^{1/2} \right. \\ & \qquad \left. + \E[(V_j^{(1)})^2 |\zeta_j^{(1)}|^2 ]^{1/2}\right].
\end{aligned}
\end{equation}
Inserting \eqref{eq:auxEq59} thus yields
\begin{equation}\label{eq:auxEq50}
\E[\sup_{{\bf y} \in B_{M+1}} \|\frac{1}{N}\sum_{i=1}^N {\bf W}_i \sigma({\bf A}^{(i)} {\bf y} + {\bf C}^{(i)} {\bf Z}_t + \bm{\zeta}^{(i)}) - F^{*,N}({\bf y},{\bf Z}_t)\|] \leq \frac{(M+2+\E[\|{\bf Z}_t \|])C_*}{\sqrt{N}}.
\end{equation}
Iterating \eqref{eq:auxEq47} $(T+1)$-times and inserting \eqref{eq:auxEq50}  yields
\begin{equation*} \begin{aligned}
\E[ &  \|{\bf Y}_0  - \mathbf{X}^*_{0}\| \mathbbm{1}_{\Omega_{ESP}}] \\ & \leq \sum_{k=0}^{T} r^k \left[\frac{(M+2+\E[\|{\bf Z}_{-k} \|])C_*}{\sqrt{N}} + s_N \right] +  r^{T+1} \E[ \|\mathbf{Y}_{-T-1}- \mathbf{X}^*_{-T-1}\|\mathbbm{1}_{\Omega_{ESP}}] 
\\ & \leq \frac{(M+2+\max_{t \in \{0, \ldots,-T\}}\E[\|{\bf Z}_{t} \|])C_*}{(1-r)\sqrt{N}} + \frac{s_N}{1-r} + 2 (M+1) r^{T+1}.
\end{aligned} \end{equation*}
Noting that ${\bf Y}_0 = H_{\bf W}^{{\bf A},{\bf C},\bm{\zeta}}({\bf Z})$, \eqref{eq:auxEq50} and \eqref{eq:auxEq51} hence prove that
\begin{equation}\label{eq:auxEq56}\begin{aligned}
\E[& |\mathcal{R}(H_{\bf W}^{{\bf A},{\bf C},\bm{\zeta}}) - \mathcal{R}(H^*)  |\mathbbm{1}_{\Omega_{ESP}}] \\ & \leq L_L \left[ \frac{(M+2+\max_{t \in \{0, \ldots,-T\}}\E[\|{\bf Z}_{t} \|])C_*}{(1-r)\sqrt{N}} + \frac{s_N }{1-r}  + 2 (M+1) r^{T+1} \right]. \end{aligned}
\end{equation}
\textit{Estimating $\P(\Omega \setminus \Omega_{ESP})$:}

It thus remains to prove that  the probability that the random ESN parameters lie in $\Omega_{ESP}$ increases to $1$ at rate $1/\sqrt{N}$. To this end, first note that for any $\mathbf{x} \in \R^{N^*},{\bf z} \in D_d$
\[  \|{\bf W} F^{{\bf A},{\bf C},\bm{\zeta}}(\mathbf{x},{\bf z})\| \leq \|{\bf W} F^{{\bf A},{\bf C},\bm{\zeta}}(\mathbf{x},{\bf z}) - F^{*,N}(\mathbf{x},{\bf z}) \| + \|F^{*,N}(\mathbf{x},{\bf z})-F^{*}(\mathbf{x},{\bf z})\| + M \]
and therefore  
\begin{equation}\label{eq:auxEq55}\begin{aligned}  & \P( \Omega \setminus \Omega_{ESP})  \\ & \leq \P(\bar{M}\geq M+1)\\  &  \leq \P\left(\sup_{\substack{\mathbf{x} \in B_{M+1} \\ t\in \{0,\ldots,-T\}}}\|{\bf W} F^{{\bf A},{\bf C},\bm{\zeta}}(\mathbf{x},{\bf Z}_t) -  F^{*,N}(\mathbf{x},{\bf Z}_t) \| + \|F^{*,N}(\mathbf{x},{\bf Z}_t)-F^{*}(\mathbf{x},{\bf Z}_t)\| \geq 1 \right) 
\\ & \leq \P\left(\sup_{\substack{\mathbf{x} \in B_{M+1} \\ t\in \{0,\ldots,-T\}}}\|{\bf W} F^{{\bf A},{\bf C},\bm{\zeta}}(\mathbf{x},{\bf Z}_t) -  F^{*,N}(\mathbf{x},{\bf Z}_t) \| \geq \frac{1}{2} \right) \\ & \quad \quad  + \P\left(\sup_{\substack{\mathbf{x} \in B_{M+1} \\ t\in \{0,\ldots,-T\}}} \|F^{*,N}(\mathbf{x},{\bf Z}_t)-F^{*}(\mathbf{x},{\bf Z}_t)\| \geq \frac{1}{2} \right)
\\ & \leq 2\E\left[\sup_{\substack{\mathbf{x} \in B_{M+1} \\ t\in \{0,\ldots,-T\}}} \| {\bf W} F^{{\bf A},{\bf C},\bm{\zeta}}(\mathbf{x},{\bf Z}_t) - F^{*,N}(\mathbf{x},{\bf Z}_t) \|  \right] + 2 s_N.
\\ & = 2 \E\left[\E\left[\sup_{\substack{\mathbf{x} \in B_{M+1} \\ {\bf v} \in \{{\bf z}_0, \ldots,{\bf z}_{-T}\}}} \| {\bf W} F^{{\bf A},{\bf C},\bm{\zeta}}(\mathbf{x},{\bf v}) - F^{*,N}(\mathbf{x},{\bf v}) \| \right]_{{\bf z}={\bf Z}} \right] + 2 s_N.
\end{aligned} \end{equation}
The inner expectation can now be estimated using precisely the same arguments as in \eqref{eq:auxEq46}, \eqref{eq:auxEq53}, \eqref{eq:auxEq54} yielding for any ${\bf z} \in (D_d)^{\Z_-}$
\begin{equation*}
\begin{aligned}
\E & \left[\sup_{\substack{\mathbf{x} \in B_{M+1} \\ {\bf v} \in \{{\bf z}_0, \ldots,{\bf z}_{-T}\}}} \| {\bf W} F^{{\bf A},{\bf C},\bm{\zeta}}(\mathbf{x},{\bf v}) - F^{*,N}(\mathbf{x},{\bf v}) \| \right] \\
&  \leq 2 L_\sigma \sum_{j=1}^{N^*} \E\left[\sup_{\substack{\mathbf{y} \in B_{M+1} \\ {\bf v} \in \{{\bf z}_0, \ldots,{\bf z}_{-T}\}}} \left| \frac{1}{N}\sum_{i=1}^N V_j^{(i)} \varepsilon_i \sigma(( {\bf y},{\bf v},1)\cdot {\bf U}_{j}^{(i)})\right|\right]
\\
&  \leq 4 L_\sigma \sum_{j=1}^{N^*} \E\left[\sup_{\substack{\mathbf{y} \in B_{M+1} \\ {\bf v} \in \{{\bf z}_0, \ldots,{\bf z}_{-T}\}}} \left| \frac{1}{N}\sum_{i=1}^N V_j^{(i)} \varepsilon_i (( {\bf y},{\bf v},1)\cdot {\bf U}_{j}^{(i)}) \right|\right]
\\
&  \leq \frac{4 L_\sigma}{\sqrt{N}} \sum_{j=1}^{N^*} (M+1)\E[(V_j^{(1)})^2 \| {\bf A}_j^{(1)}\|^2]^{1/2} + \left(\max_{t \in \{0, \ldots,-T\}} \|{\bf z}_t\|\right) \E[(V_j^{(1)})^2 \| {\bf C}_j^{(1)}\|^2]^{1/2} \\ & \quad \quad \quad + \E[(V_j^{(1)})^2 |\zeta_j^{(1)}|^2]^{1/2}.
\end{aligned}
\end{equation*}
Combining this with \eqref{eq:auxEq55} yields
\begin{equation} \label{eq:auxEq57} \begin{aligned}
\P(\Omega \setminus \Omega_{ESP}) & \leq \frac{2(M+2+\E\left[\max_{t \in \{0, \ldots,-T\}} \|{\bf Z}_t\|\right])C_*}{\sqrt{N}}+2 s_N.
\end{aligned}
\end{equation}
\textit{Putting together the ingredients:}\\
\noindent Altogether, setting 
\[\eta = \frac{L_L}{\delta} \left[ \frac{(M+2 + \max_{t \in \{0, \ldots,-T\}}\E[\|{\bf Z}_t \|]) C_*}{(1-r)\sqrt{N}} + \frac{s_N }{1-r} + 2 (M+1) r^{T+1} \right] \]
and combining \eqref{eq:auxEq56} and \eqref{eq:auxEq57} yields
\[ \begin{aligned}  \P & \left( |\mathcal{R}(H_{\bf W}^{{\bf A},{\bf C},\bm{\zeta}}) - \mathcal{R}(H^*)| > \eta  \right)  \\ & \leq \P \left( |\mathcal{R}(H_{\bf W}^{{\bf A},{\bf C},\bm{\zeta}}) - \mathcal{R}(H^*)|\mathbbm{1}_{\Omega_{ESP}} > \eta  \right) + \P(\Omega \setminus \Omega_{ESP})
\\ & \leq \delta + \frac{2(M+2+\E\left[\max_{t \in \{0, \ldots,-T\}} \|{\bf Z}_t\|\right])C_*}{\sqrt{N}} + 2 s_N. \end{aligned} \]\qed
\end{proof}

\begin{acknowledgements}
We thank Josef Teichmann for fruitful discussions that helped in improving the paper. Lukas G and JPO acknowledge partial financial support  coming from the Research Commission of the Universit\"at Sankt Gallen and the Swiss National Science Foundation (grant number 200021\_175801/1). Lyudmila G acknowledges partial financial support of the Graduate School of Decision Sciences of the Universit\"at Konstanz. JPO acknowledges partial financial support of the French ANR ``BIPHOPROC" project (ANR-14-OHRI-0002-02). The three authors thank the hospitality and the generosity of the FIM at ETH Zurich where a significant portion of the results in this paper were obtained.
\end{acknowledgements}
%


\end{document}